\newcolumntype{R}[1]{>{\raggedleft\let\newline\\\arraybackslash\hspace{0pt}}p{#1}}
\setlist[description]{labelindent=!,labelsep=1em,font=\normalfont\bfseries}
\newcommand\blfootnote[1]{%
  \begingroup
  \renewcommand\thefootnote{}\footnote{#1}%
  \addtocounter{footnote}{-1}%
  \endgroup
}
\newtheorem{proposition}{Proposition}
\newtheorem{lemma}[proposition]{Lemma}
\newtheorem{theorem}[proposition]{Theorem}
\newtheorem*{question}{Problem}
 \def\mbbone{\mathbf{1}} %
\theoremstyle{definition}
\newtheorem{definition}[proposition]{Definition}
\theoremstyle{remark}
\newtheorem*{remark}{Remark}
\newcommand{\mop}[1]{\operatorname{#1}}
\newcommand{\ud}{\mathrm{d}}
\newcommand{\st}{\ \middle|\ }
\newcommand{\eqdef}{\smash{\ \stackrel{\text{def}}{=}\ }}
\newcommand{\abs}[1]{\left|#1\right|}
\newcommand{\bE}{{\mathbb{E}}}
\newcommand{\bP}{{\mathbb{P}}}
\newcommand{\bN}{{\mathbb{N}}}
\newcommand{\bC}{{\mathbb{C}}}
\newcommand{\bR}{{\mathbb{R}}}
\newcommand{\bS}{{\mathbb{S}}}
\newcommand{\cN}{{\mathcal{N}}}
\newcommand{\cO}{{\mathcal{O}}}
\newcommand{\cH}{{\mathcal{H}}}
\newcommand{\vol}{\mop{vol}}
\newcommand{\HCp}{\ensuremath{\mop{HC}'}}
\newcommand{\BP}{\ensuremath{\mop{BP}}}
\newcommand{\Sin}{\mop{Sin}}
\newcommand{\Cos}{\mop{Cos}}
\newcommand{\Proj}{\ensuremath{\bP^n}}
\newcommand{\polsys}{\cH}
\newcommand{\SH}{{\bS(\cH)}}
\newcommand{\rstd}{\rho_\textrm{std}}
\newcommand{\flo}[1]{\lfloor #1 \rfloor}
\def\geq{\geqslant}
\def\leq{\leqslant}
\def\Re{\mop{Re}}
\def\faCalculator{\raisebox{-.3ex}{\includegraphics[height=1em]{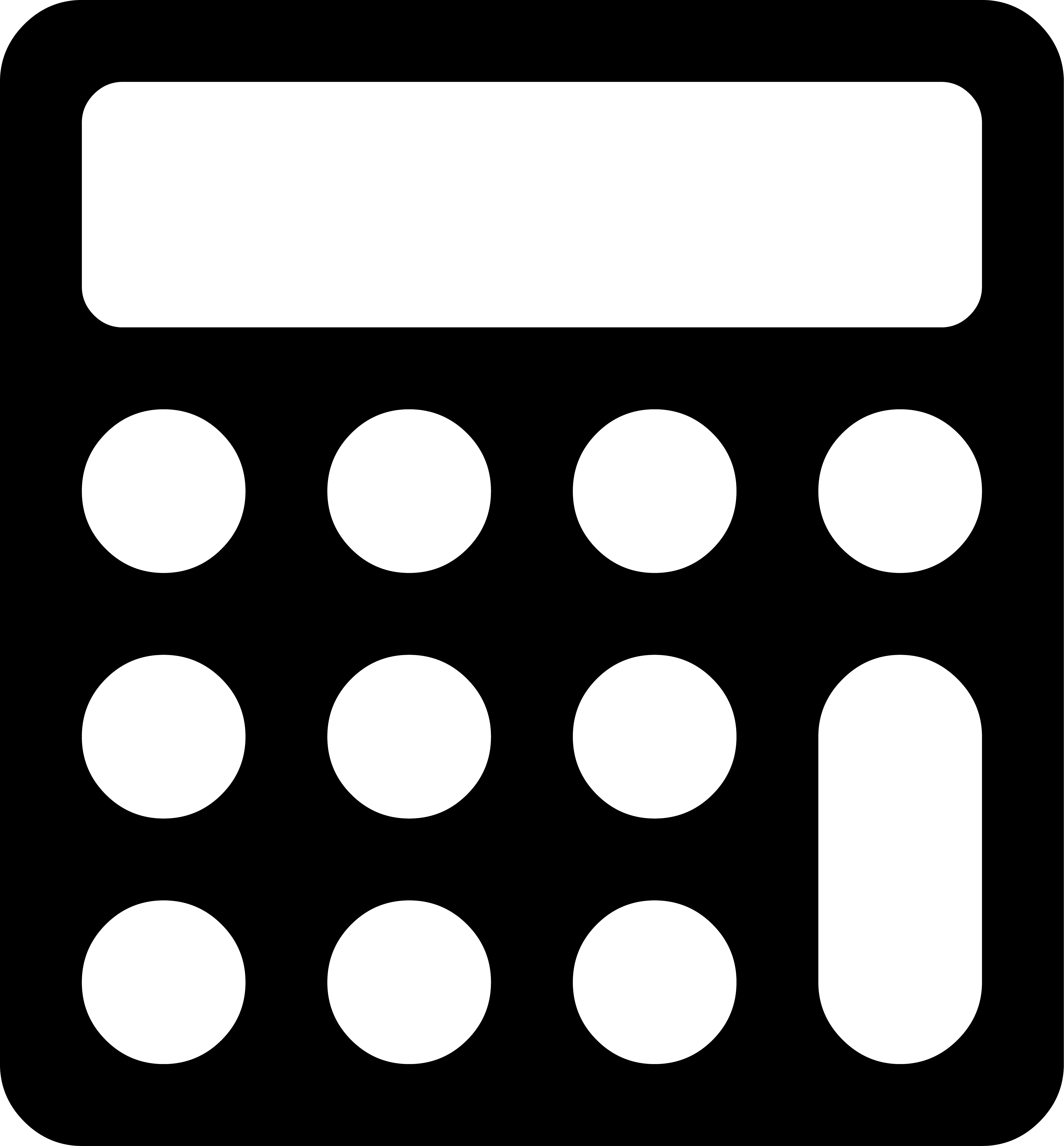}}}
\def\calcsub{{\scalebox{.5}{\text{\faCalculator}}}}
\def\leqcalc{ \leq_\calcsub }
\def\ltcalc{ <_\calcsub }
\def\geqcalc{ \geq_\calcsub }
\def\snd{standard normal variable\xspace}
\def\snds{standard normal variables\xspace}
\def\uuv{uniform random variable\xspace}
\def\uuvs{uniform random variables\xspace}
\author{\itshape\Large Pierre Lairez}
\title{\LARGE
A deterministic algorithm to compute\\ approximate roots of polynomial systems\\ in polynomial average time
}
\date{}
\begin{document}

\maketitle

\blfootnote{\scshape Technische Universität Berlin, Germany --- DFG research grant BU 1371/2-2}
\blfootnote{\emph{Date ---} March 15, 2016.}
\blfootnote{\emph{DOI ---} \href{http://dx.doi.org/10.1007/s10208-016-9319-7}{10.1007/s10208-016-9319-7}} 
\blfootnote{\emph{Keywords ---} Polynomial system, homotopy continuation, Smale's 17th problem, derandomization.}
\blfootnote{\emph{2010 Mathematics subject classification ---} 
Primary
68Q25;  %
Secondary
65H10, %
65H20, %
65Y20.  %
\\[-.5em] }

\vspace{-2\baselineskip}
\begin{abstract}\noindent
  We describe a deterministic algorithm that computes an approximate root
  of~$n$ complex polynomial equations in~$n$ unknowns in average polynomial
  time with respect to the size of the input, in the Blum-Shub-Smale model with
  square root.  It rests upon a derandomization of an algorithm of Beltrán
  and Pardo and gives a deterministic affirmative answer to Smale's
  17{th} problem. The main idea is to make use of the randomness
  contained in the input itself.
\end{abstract}

\section*{Introduction}

Shub and
Smale
provided an extensive theory of Newton's iteration and homotopy continuation
which aims at studying the complexity of computing approximate roots of
complex polynomial systems of equations with as many unknowns as equations.
\autocites{Sma86}{ShuSma96,ShuSma94,ShuSma93,ShuSma93b}{Shu09}
In
their theory, an \emph{approximate root} of a polynomial system refers to a
point from which Newton's iteration converges quadratically to an exact zero of
the system---see Definition~\ref{def:approx}.
This article answers with a deterministic algorithm the following question that they left open:
\begin{question}[{Smale\footcite[17{th} problem]{Sma98}}]
  Can a zero of~$n$ complex polynomial equations in~$n$ unknowns be found
  approximately, on the average, in polynomial time with a uniform algorithm?
\end{question}
The term \emph{algorithm} refers to a machine \emph{à la}
Blum-Shub-Smale\footcite{BluShuSma89} (BSS): a random access memory machine whose
registers can store arbitrary real numbers, that can compute elementary
arithmetic operations in the real field at unit cost and that can branch
according to the sign of a given register.  To avoid vain technical
argumentation, I consider the BSS model extended with the possibility of
computing the square root of a positive real number at unit cost.
The wording \emph{uniform
algorithm} emphasizes the requirement that a single finite machine should solve
all the polynomial systems whatever the degree or the dimension.  The
complexity should be measured with respect to the size of the input, that is
the number of real coefficients in a dense representation of the system to be
solved.  An important characteristic of a root of a polynomial system is its
\emph{conditioning}.  Because of the feeling that approximating a root with
arbitrarily large condition number requires arbitrarily many steps, the problem
only asks for a complexity that is polynomial \emph{on the average} when the
input is supposed to be sampled from a certain probability distribution that we
choose.  The relevance of the average-case complexity is arguable, for the
input distribution may not reflect actual inputs arising from applications.
But yet, average-case complexity sets a mark with which any other result should
be compared.

The problem of solving polynomial systems is a matter of numerical analysis
just as much as it is a matter of symbolic computation. Nevertheless, the
reaches of these approaches differ in a fundamental way.  In an exact setting,
having one root of a generic polynomial system is having them all because of
Galois' indeterminacy, and it turns out that the number of solutions of a
generic polynomial system is the product of the degrees of the equations,
Bézout's bound, and is not polynomially bounded by the number of coefficients
in the input.  This is why achieving a polynomial complexity is only possible
in a numerical setting.

The main numerical method to solve a polynomial system~$f$ is homotopy
continuation.  The principle is to start from another polynomial system~$g$ of
which we know a root~$\eta$ and to move~$g$ toward~$f$ step by step while
tracking all the way to~$f$ an approximate root of the deformed system by
Newton's iteration. The choice of the step size and the complexity of this
procedure is well understood in terms of the condition number along the
homotopy path\autocite{Shu09,BelPar11,BurCuc11}. 
Most of the theory so far is
exposed in the book \emph{Condition}\autocite{BurCuc13}.  The main difficulty
is to choose the starting pair~$(g,\eta)$.  Shub and Smale\autocite{ShuSma94}
showed that there exists good starting pairs, and even many, for some measure, but without providing a way to
compute them efficiently.  Beltrán and Pardo\autocite{BelPar09a,BelPar11}
discovered how to pick a starting pair at random and showed that, on average,
this is a good choice.  This led to a nondeterministic polynomial average-time
algorithm which answers Smale's question.  Bürgisser and
Cucker\footcite{BurCuc11} performed a smoothed analysis of the Beltrán-Pardo
algorithm and described a deterministic algorithm with
complexity~$N^{\cO(\log\log N)}$, where~$N$ is the input size.  The question of
the existence of a deterministic algorithm with polynomial average complexity
it still considered open.

\bigskip
This work provides, with Theorem~\ref{thm:main}, a complete deterministic
answer to Smale's problem, even though, as we will see, it  enriches the
theory of homotopy continuation itself only marginally.  The answer is based on
a derandomization of the nondeterministic Beltrán and Pardo's algorithm
according to two basic observations. Firstly, an approximate root of a
system~$f$ is also an approximate root of a slight perturbation of~$f$.
Therefore, to compute an approximate root of~$f$, one can only consider the
most significant digits of the coefficients of~$f$.  Secondly, the remaining
least significant digits, or noise, of a continuous random variable are
practically independent from the most significant digits and almost uniformly
distributed.  In the BSS model, where the input is given with infinite
precision, this noise can be extracted and can be used in place of a genuine
source of randomness.  This answer shows that for Smale's problem, the
deterministic model and the nondeterministic are essentially equivalent:
randomness is part of the question from its very formulation asking for an
average analysis.  It is worth noting that the idea that the input is subject
to a random noise that does not affect the result is what makes the smoothed analysis of
algorithms relevant\autocite{SpiTen01}.  Also, the study of the resolution of a
system~$f$ given only the most significant digits of~$f$ is somewhat related to
recent works in the setting of machines with finite
precision\autocite{BriCucPen14}.

The derandomization proposed here is different in nature from the
derandomization theorem $\mop{BPP}_\bR =
\mop{P}_\bR$\autocite[\S17.6]{BluCucShu98}, which states that a decision
problem that can be solved over the reals in polynomial time (worst-case
complexity) with randomization and bounded error probability can also be solved
deterministically in polynomial time. Contrary to this work, the
derandomization theorem above relies on the ability of a BSS machine to hold
arbitrary constants in its definition, even hardly computable ones or worse,
not computable ones which may lead to unlikely statements. For example, one can
decide the termination of Turing machines with a BSS machine insofar
Chaitin's~$\Omega$ constant is built in the machine.

\paragraph*{Acknowledgment} I am very grateful to Peter Bürgisser for his help
and constant support, and to Carlos Beltrán for having carefully commented this
work.  I thank the two referees for their meticulous reading and their
insightful suggestions.

\tableofcontents

\section{The method of homotopy continuation}

This part exposes the principles of Newton's iterations and homotopy
continuation upon which rests Beltrán and Pardo's algorithm.  It mostly
contains known results and variations of known results that will be used in the
next part ;  notable novelties are the inequality relating the maximum of the
condition number along a homotopy path by the integral of the cube of the
condition number (Proposition~\ref{lem:maxmu}) and a variant of Beltràn and
Pardo's randomization procedure (Theorem~\ref{thm:bprand}).  For Smale's
problem, the affine setting and the projective setting are known to be
equivalent\autocite{BelPar09a}, so we only focus on the latter.

\subsection{Approximate root}
\label{sec:approxroot}

Let~$n$ be a positive integer.\marginpar{\raggedright\footnotesize (Symbols in the margin mark the place where they are defined.)}
The space~$\bC^{n+1}$ is endowed with the usual
Hermitian inner product.  For~$d\in\bN$, let~$H_d$ denote the vector space of
homogeneous polynomials of degree~$d$ in the variables~$x_0,\dotsc,x_{n}$.
It is endowed with an Hermitian inner product,
called \emph{Weyl's inner product}, for which the monomial basis is an
orthogonal basis and~$\| x_0^{a_0} \dotsm x_n^{a_n} \|^2 = \frac{a_0!\dotsm
a_n!}{(a_1+\dotsb+a_n)!}$.
Let~$d_1,\dotsc,d_n$ be positive integers and let~$\polsys$\marginpar{$\polsys$}
denote~$H_{d_1}\times\dotsb\times H_{d_n}$, the space of all systems of
homogeneous equations in~$n+1$ variables and of degree~$d_1,\dotsc,d_n$.  This
space is endowed with the Hermitian inner product induced by the inner product
of each factor.  The dimension~$n$ and the~$d_i$'s are fixed throughout this
article.  Let~$D$\marginpar{$D$} be the maximum of all~$d_i$'s and let~$N$\marginpar{$N$} denote the complex
dimension of~$\cH$, namely
\[ N = \binom{n+d_1}{n}+\dotsb+ \binom{n+d_n}{n}. \]
Elements of~$\polsys$ are polynomial systems to be solved, and~$2N$ is the
\emph{input size}.  Note that~$2\leq N$, $n^2 \leq N$ and~$D \leq N$.

For every Hermitian space~$V$, we endow the set~$\bS(V)$ of elements of norm~$1$
with the induced Riemannian metric~$d_\bS$:
the distance between two points~$x,y\in\bS(V)$ is the angle between them,
namely~$\cos d_\bS(x,y) = \Re\langle x,y\rangle$.
The projective space~$\bP(V)$ is endowed with the quotient
Riemannian metric~$d_\bP$ defined by
\[ d_{\bP}([x],[y]) \eqdef \min_{\lambda\in\bS(\bC)} d_{\bS}(x,\lambda y). \]

An element of~$f\in\polsys$ is regarded as a homogeneous polynomial
function~$\bC^{n+1}\to\bC^n$.  A \emph{root}---or \emph{solution}, or
\emph{zero}---of~$f$ is a point~$\zeta\in\bP^n$ such that~$f(\zeta)=0$.
Let~$V$\marginpar{$V$} be the \emph{solution variety} $\left\{ (f,\zeta)\in \polsys \times \bP^n \st f(z)=0 \right\}$.
For~$z\in\bC^{n+1}\setminus \left\{ 0 \right\}$, let~$\ud f(z) :
\bC^{n+1}\to\bC^n$ denote the differential of~$f$ at~$z$.  Let~$z^\perp$ be the
orthogonal complement of~$\bC z$ in~$\bC^{n+1}$.  If the restriction~$\ud f(z)|_{z^\perp} :
z^\perp \to \bC^n$ is invertible, we define the \emph{projective Newton operator~$\cN$}\marginpar{$\cN(f,z)$}, introduced by Shub\autocite{Shu93}, by 
\[ \cN(f,z) \eqdef z - \ud f(z)|^{-1}_{z^\perp}(f(z)). \]
It is clear that~$\cN(f,\lambda z)=\lambda \cN(f,z)$, so~$\cN(f,-)$ defines a partial function~$\bP^n \to \bP^n$.

\begin{definition}\label{def:approx}
  A point~$z\in\bP^n$ is an \emph{approximate root} of~$f$ if the sequence
  defined recursively by~$z_0=z$ and~$z_{k+1} = \cN(f,z_k)$ is well defined and if there
  exists~$\zeta\in\bP^n$ such that~$f(\zeta)=0$ and $d_\bP(z_k,\zeta) \leq
  2^{1-2^k} d_\bP(z,\zeta)$ for all~$k\geq 0$.  The point~$\zeta$ is the
  \emph{associated root} of~$z$ and we say that~$z$ \emph{approximates~$\zeta$
  as a root of~$f$.}
\end{definition}

For~$f\in \polsys$ and~$z\in\bC^{n+1}\setminus \left\{ 0 \right\}$, we consider
the linear map
\[ \Xi(f,z) : (u_1,\dotsc,u_n) \in \bC^{n} \mapsto \ud f(z)|^{-1}_{z^\perp}\left( \sqrt{d_1} \|z\|^{d_1-1} u_1,\dotsc, \sqrt{d_n} \|z\|^{d_n-1} u_n \right) \in z^\perp  \]
and the \emph{condition number\footcites{ShuSma93b}[see also][\S16, for more details
about the condition number.]{BurCuc13} of~$f$ at~$z$} is defined to be~$\mu(f,z) \eqdef \| f
\| \, \| \Xi(f,z) \|$,\marginpar{$\mu(f,z)$} where~$\| \Xi(f,z) \|$ is the operator norm.
  When~$\ud
f(z)|_{z^\perp}$ is not invertible, we set~$\mu(f,z)=\infty$.
The condition number is often denoted~$\mu_\text{norm}$ but we stick here to the shorter notation~$\mu$.
For
all~$u,v\in\bC^\times$ we check that~$\mu(u f,v z) = \mu(f,z)$.
We note also that~$\mu(f,z)\geq \sqrt{n} \geq 1$.\footcite[Lemma~16.44]{BurCuc11}
The projective $\mu$-theorem (a weaker form of the better known projective~$\gamma$-theorem) relates the condition number and the notion of approximate root:
\begin{theorem}[{Shub, Smale\footcite{ShuSma93b}}]
  \label{thm:gthm}
  For any~$(f,\zeta)\in V$ and~$z\in\bP^n$, if~$D^{3/2}\mu(f,\zeta) d_\bP(z, \zeta)
  \leq \frac13$, then~$z$ is an approximate root of~$f$ with associated
  root~$\zeta$.
\end{theorem}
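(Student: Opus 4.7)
The plan is to reduce the statement to the projective $\gamma$-theorem of Shub and Smale via a higher-derivative estimate bounding a local invariant $\gamma$ in terms of $\mu$. For $(f,z)\in\polsys\times (\bC^{n+1}\setminus\{0\})$ with $\ud f(z)|_{z^\perp}$ invertible, set
\[ \gamma(f,z) \eqdef \sup_{k\geq 2}\left\| \tfrac{1}{k!}\, \ud f(z)|_{z^\perp}^{-1}\, \ud^k f(z)|_{z^\perp}\right\|^{1/(k-1)}, \]
which, after the suitable normalization by powers of $\|z\|$, descends to a function on $V$. The \emph{projective $\gamma$-theorem} asserts that there is an absolute constant $c>0$ such that $\gamma(f,\zeta)\, d_\bP(z,\zeta)\leq c$ implies that $z$ is an approximate root of $f$ with associated root $\zeta$. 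I would cite this without reproving it: its proof lifts projective Newton to the unit sphere, compares the result with affine Newton, and iterates the quadratic Smale recursion $d_\bP(z_{k+1},\zeta)\leq c_0\,\gamma(f,\zeta)\, d_\bP(z_k,\zeta)^2$ to obtain the quadratic contraction required by Definition~\ref{def:approx}.

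The heart of the matter is then the higher-derivative estimate
\[ \gamma(f,\zeta) \leq \tfrac{1}{2}\, D^{3/2}\, \mu(f,\zeta). \]
To prove it, I would use the invariance of the Weyl inner product under the unitary group of $\bC^{n+1}$ to reduce to $\zeta=(1,0,\dotsc,0)$ with $\|\zeta\|=1$. In those coordinates, only the monomials $x_0^{d_i-k}x_1^{\alpha_1}\dotsm x_n^{\alpha_n}$ with $|\alpha|=k$ contribute to $\ud^k f_i(\zeta)|_{\zeta^\perp}$; using the explicit Weyl norm $\|x^\alpha\|^2 = \alpha!/d_i!$ and Cauchy--Schwarz, one obtains $\|\tfrac{1}{k!}\ud^k f_i(\zeta)|_{\zeta^\perp}\|\leq \sqrt{\binom{d_i}{k}}\|f_i\|$. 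The diagonal factors $\sqrt{d_i}\|\zeta\|^{d_i-1}$ built into $\Xi(f,\zeta)$ then turn $\|\ud f(\zeta)|_{\zeta^\perp}^{-1}\|\cdot\|f\|$ into $\|\Xi(f,\zeta)\|\cdot\|f\| = \mu(f,\zeta)$, and the estimate $\binom{d_i}{k}\leq d_i^k/k!$ transforms the combinatorial factor into $(D^{3/2}/2)^{k-1}$ after extracting $(k-1)$-th roots.

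Combining both ingredients, the hypothesis $D^{3/2}\mu(f,\zeta)\, d_\bP(z,\zeta)\leq 1/3$ yields $\gamma(f,\zeta)\, d_\bP(z,\zeta)\leq 1/6$, which lies within the range where the projective $\gamma$-theorem applies, giving the conclusion. The main obstacle is the higher-derivative estimate itself: a naive bound based on the sup norm of monomials on the sphere only yields $\gamma\lesssim D^k\mu$, which would produce an exponential rather than polynomial condition on $d_\bP(z,\zeta)$. The improvement to $D^{3/2}$ is tailor-made for the Weyl norm, which is precisely designed so that the evaluation map $f\mapsto f(z)$ at a unit vector, together with all its higher-order analogues restricted to $z^\perp$, has controlled operator norms via unitary invariance.
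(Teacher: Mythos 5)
Your overall route---deduce the statement from the projective $\gamma$-theorem together with the higher-derivative estimate $\gamma(f,\zeta)\leq\tfrac12 D^{3/2}\mu(f,\zeta)$---is the standard provenance of this result: both ingredients are classical, and their combination is exactly the ``classical form'' that the paper cites (Theorems~1 and~2 of \S14 in Blum--Cucker--Shub--Smale). The paper's own proof is therefore much shorter than yours in one direction and more careful in another: it simply quotes the combined classical statement, whose hypothesis reads $D^{3/2}\mu(f,\zeta)\tan(d_\bP(z,\zeta))\leq 3-\sqrt7$, and then verifies numerically that the hypothesis $D^{3/2}\mu(f,\zeta)\,d_\bP(z,\zeta)\leq\tfrac13$ implies it.

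The gap in your write-up sits precisely at that verification, which is the only step carrying quantitative content. You state the $\gamma$-theorem with an unspecified absolute constant $c$ applied to $d_\bP(z,\zeta)$ itself, and then assert that $\gamma(f,\zeta)\,d_\bP(z,\zeta)\leq\tfrac16$ ``lies within the range''. But the classical projective $\gamma$-theorem is stated for $\tan(d_\bP(z,\zeta))$ with the constant $\tfrac{3-\sqrt7}{2}\approx 0.177$, and since $\tan(d)\geq d$, the bound $\gamma\,d_\bP\leq\tfrac16\approx 0.167$ does not by itself control $\gamma\tan(d_\bP)$; the margin is thin enough that this cannot be waved through. The missing input is a bound on $d_\bP(z,\zeta)$ alone: because $\mu(f,\zeta)\geq\sqrt n\geq 1$ and $D\geq 1$, the hypothesis forces $d_\bP(z,\zeta)\leq\tfrac13$, whence by convexity of $\tan$ on $[0,\tfrac\pi2)$ one gets $\tan(d_\bP(z,\zeta))\leq 3\tan(\tfrac13)\,d_\bP(z,\zeta)$, and finally one checks that $3\tan(\tfrac13)\cdot\tfrac16\leq\tfrac{3-\sqrt7}{2}$ (roughly $0.173$ against $0.177$). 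With that inserted, your argument closes, and it is---at the level of $\gamma$ rather than $\mu$---exactly the computation that constitutes the paper's proof; had your unspecified constant been even slightly smaller, or the $\tan$ discrepancy slightly larger, the conclusion would fail, which is why this step cannot be left implicit.
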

\begin{remark}
  The classical form of the result\autocite[\S14, Theorems~1 and~2]{BluCucShu98},
  requires~$D^{3/2}\mu(f,\zeta) \tan(d_\bP(z, \zeta)) \leq 3 - \sqrt{7}$.
  The hypothesis required here is stronger: since~$D^{3/2}\mu(f,\zeta) \geq 1$, if~$D^{3/2}\mu(f,\zeta) d_\bP(z, \zeta)
  \leq \frac13$ then~$d_\bP(z, \zeta) \leq \frac13$ and then
  $\tan(d_\bP(z, \zeta)) \leq 3\tan(\tfrac13) d_\bP(z, \zeta) \leq \frac{3 - \sqrt{7}}{D^{3/2}\mu(f,\zeta)}$
  because~$\tan(\frac13) \leqcalc 3-\sqrt{7}$.
  The symbol~$\leqcalc$\marginpar{$\leqcalc$} indicates an inequality that is easily checked using a calculator.
\end{remark}

The algorithmic use of the condition number heavily relies on this explicit Lipschitz estimate:
\begin{proposition}[{Shub\footcites[Theorem~1]{Shu09}[see also][Theorem~16.2]{BurCuc13}}]
  \label{prop:lipcond}
  Let~$0\leq\varepsilon\leq \frac17$.
  For any~$f,g\in\bP(\cH)$ and~$x,y\in\bP^n$, if
  \[ \mu(f,x) \max\left( D^{1/2} d_\bP(f,g), D^{3/2} d_{\bP}(x,y) \right) \leq \frac{\varepsilon}{4} \]
  then~$(1+\varepsilon)^{-1} \mu(f,x) \leq \mu(g,y) \leq (1+\varepsilon)\mu(f,x)$.
\end{proposition}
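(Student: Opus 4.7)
The plan is to split the perturbation $(f,x)\rightsquigarrow(g,y)$ into two legs, $(f,x)\rightsquigarrow(g,x)$ and $(g,x)\rightsquigarrow(g,y)$, and show each leg distorts $\mu$ by a multiplicative factor in $[(1+\varepsilon)^{-1/2},\,(1+\varepsilon)^{1/2}]$; composing then yields the claimed $(1+\varepsilon)^{\pm 1}$ estimate. Because the first leg will already show that $\mu(f,x)$ and $\mu(g,x)$ are within $(1+\varepsilon)^{\pm 1/2}$ of one another, either may serve as the reference condition number when controlling the second leg, so the two hypotheses on $d_\bP(f,g)$ and $d_\bP(x,y)$ can both be phrased in terms of~$\mu(f,x)$.

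The first leg, changing $f$ to $g$ with $x$ fixed, is essentially linear algebra. After scaling to unit-norm representatives, it suffices to compare $\|\Xi(g,x)\|$ with $\|\Xi(f,x)\|$, i.e.~to compare $\ud g(x)|_{x^\perp}^{-1}$ with $\ud f(x)|_{x^\perp}^{-1}$. The Weyl inner product yields the basic bound $\|\ud(g-f)(x)\|\leq \sqrt{D}\,\|g-f\|\,\|x\|^{D-1}$, which is precisely where the exponent~$D^{1/2}$ of the hypothesis originates. Feeding this into the standard inverse-perturbation identity $B^{-1}-A^{-1}=-B^{-1}(B-A)A^{-1}$, together with the diagonal scalings $\sqrt{d_i}\|x\|^{d_i-1}$ absorbed into the definition of $\Xi$, gives
\[
\bigl|\log\|\Xi(g,x)\|-\log\|\Xi(f,x)\|\bigr|\ \leq\ c\,D^{1/2}\,d_\bP(f,g)\,\mu(f,x),
\]
and an elementary bound $|\log\|g\|-\log\|f\||\leq c\,d_\bP(f,g)$ handles the scalar factor of~$\mu$. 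The assumption $\mu(f,x)\,D^{1/2}\,d_\bP(f,g)\leq\varepsilon/4$ then closes this leg.

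The second leg, moving $x$ to $y$ with $g$ fixed, is the technical heart: the fibre $x^\perp$ itself varies with~$x$, so one cannot simply differentiate matrix entries. The right tool is the projective version of Shub and Smale's $\gamma$-invariant, in particular the bound $\gamma(g,x)\leq \tfrac12 D^{3/2}\mu(g,x)$ alluded to in the remark above, which controls the higher derivatives of $z\mapsto\ud g(z)|_{z^\perp}^{-1}$ along any geodesic of~$\bP^n$. Parametrizing the minimizing geodesic from $x$ to $y$ by arc length and integrating the resulting bound on $\tfrac{d}{dt}\log\|\Xi(g,x(t))\|$ produces
\[
\bigl|\log\|\Xi(g,y)\|-\log\|\Xi(g,x)\|\bigr|\ \leq\ c'\,D^{3/2}\,d_\bP(x,y)\,\mu(g,x),
\]
which is where the exponent~$D^{3/2}$ enters. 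Invoking the closeness of $\mu(g,x)$ to $\mu(f,x)$ from the first leg, the hypothesis $\mu(f,x)\,D^{3/2}\,d_\bP(x,y)\leq\varepsilon/4$ again closes the estimate.

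The main obstacle is the second leg: tracking the moving fibre $x^\perp$ along the geodesic, carrying the Weyl normalization cleanly through the derivative computation, and sharpening the constants enough that the small parameter $\varepsilon/4$ in the hypothesis multiplies out to exactly $(1+\varepsilon)^{\pm 1}$ in the conclusion under the calibrating assumption $\varepsilon\leq \tfrac{1}{7}$ (which in particular keeps every denominator arising in the inverse-perturbation step safely bounded away from zero, using that $\mu\geq\sqrt{n}\geq 1$). Once this bookkeeping is arranged, the two legs combine by the triangle inequality for $\log\mu$, and the proposition follows upon exponentiating.
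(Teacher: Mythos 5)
First, a point of comparison: the paper does not prove this proposition at all — it is imported verbatim from Shub (Theorem~1 of the 2009 paper) and Bürgisser--Cucker (Theorem~16.2 of \emph{Condition}) — so there is no internal proof to match. Judged on its own, your proposal correctly identifies the overall architecture used in those sources (split the perturbation into a system-variation leg and a point-variation leg, control each multiplicatively, compose), but it is an outline with genuine gaps rather than a proof.

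The decisive gap is that the entire content of the statement is quantitative: the hypothesis $\mu(f,x)\max\bigl(D^{1/2}d_\bP(f,g),\,D^{3/2}d_\bP(x,y)\bigr)\leq\varepsilon/4$ must yield exactly the factor $(1+\varepsilon)^{\pm1}$ for every $\varepsilon\leq\tfrac17$, and your two central inequalities are asserted with unspecified constants $c,c'$; if $c$ were, say, $3$, the argument would fail, so the ``sharpening of constants'' you defer as bookkeeping is precisely the theorem. The second leg is also not merely bookkeeping: the claim that $\gamma(g,x)\leq\tfrac12 D^{3/2}\mu(g,x)$ ``controls'' the derivative of $z\mapsto \ud g(z)|_{z^\perp}^{-1}$ along a geodesic is itself unproven — that derivative involves both the second derivative of $g$ and the rotation of the fibre $z^\perp$, and you never compute or bound it. The proofs in Shub and in Bürgisser--Cucker sidestep this entirely: they treat the point variation by unitary invariance, writing $\mu(g,y)=\mu(g\circ u,x)$ for a unitary $u$ with $u(x)=y$ close to the identity, bounding the Weyl-norm displacement of $g\circ u$ from $g$ by roughly $D\,d_\bP(x,y)$, and then reusing the system-variation estimate — which is exactly where $D^{3/2}=D\cdot D^{1/2}$ comes from, with constants sharp enough to close at $\varepsilon/4$. (A minor symptom of the same looseness: after normalizing to unit representatives, your bound on $\bigl|\log\|g\|-\log\|f\|\bigr|$ is vacuous, since both norms are $1$.) So the skeleton is compatible with the cited proofs, but the mechanism proposed for the point-variation leg is unsubstantiated and the explicit constant-tracking that constitutes the proposition is absent.
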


\subsection{Homotopy continuation algorithm}
\label{sec:hc-algo}

Let~$I\subset \bR$ be an interval containing~$0$ and let~$t\in I\mapsto
f_t\in\bP(\cH)$ be a continuous function.  Let~$\zeta$ be a root of~$f_0$ such that~$\ud f_0(\zeta)_{|\zeta^\perp}$ is invertible.
There is a
subinterval~$J\subset I$ containing~$0$ and open in~$I$, and a continuous
function~$t\in J\mapsto\zeta_t\in\bP^n$ such that~$\zeta_0 = \zeta$
and~$f_t(\zeta_t)=0$ for all~$t\in J$. We choose~$J$ to be the largest such
interval.
\begin{lemma}
  If~$t\mapsto f_t$ is $C^1$ on~$I$ and if~$\mu(f_t,\zeta_t)$ is bounded on~$J$, then~$J=I$.
  \label{lem:maxhomcont}
\end{lemma}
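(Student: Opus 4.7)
The plan is to argue by contradiction. Since $J$ is open in $I$ and contains~$0$, if $J \neq I$ there is an endpoint $t^* \in I \setminus J$ of $J$; without loss of generality I treat $t^* = \sup(J \cap [0,\infty))$, so that $[0,t^*] \subset I$ is compact.

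First I would show that $\zeta_t$ admits a limit in~$\bP^n$ as $t \to t^*$ from below. Differentiating the relation $f_t(\zeta_t) = 0$ using a $C^1$ lift $\zeta_t$ normalized to lie on $\bS(\bC^{n+1})$ gives, after projecting onto $\zeta_t^\perp$,
\[ \dot\zeta_t = -\,\ud f_t(\zeta_t)|^{-1}_{\zeta_t^\perp}\bigl(\dot f_t(\zeta_t)\bigr). \]
The definition of $\Xi(f,z)$ and of $\mu(f,z) = \|f\|\,\|\Xi(f,z)\|$ bounds the operator norm of $\ud f_t(\zeta_t)|^{-1}_{\zeta_t^\perp}$ in terms of $\mu(f_t,\zeta_t)$, hence by the assumed bound~$M$. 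The $C^1$ hypothesis on $t \mapsto f_t$ together with compactness of $[0,t^*]$ makes $\dot f_t(\zeta_t)$ bounded on $J \cap [0,t^*)$. Consequently $\|\dot\zeta_t\|$ is uniformly bounded on $J \cap [0,t^*)$, so $t \mapsto \zeta_t$ is Lipschitz there and extends by continuity to a single point $\zeta^* \in \bP^n$.

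Next I would show $(f_{t^*},\zeta^*) \in V$ and $\mu(f_{t^*},\zeta^*) < \infty$. The former is immediate from the joint continuity of $(t,z) \mapsto f_t(z)$. For the latter, apply Proposition~\ref{prop:lipcond} with a small fixed $\varepsilon$ to the pairs $(f_t,\zeta_t)$ and $(f_{t^*},\zeta^*)$: for $t$ close enough to $t^*$ the hypothesis is met (thanks to the uniform bound $\mu(f_t,\zeta_t) \leq M$ and the continuity established in the previous step), which yields $\mu(f_{t^*},\zeta^*) \leq (1+\varepsilon) M$. In particular, $\ud f_{t^*}(\zeta^*)|_{(\zeta^*)^\perp}$ is invertible. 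The implicit function theorem applied to the equation $f_t(z) = 0$ at $(t^*,\zeta^*)$ then produces a continuous local branch $t \mapsto \tilde\zeta_t$ through $\zeta^*$ on a neighborhood of $t^*$ in~$I$; by the local uniqueness part of the theorem this branch must coincide with $t \mapsto \zeta_t$ on $J$ near $t^*$, yielding a strict enlargement of~$J$ and contradicting its maximality.

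The main obstacle is the first step: to upgrade the obvious subsequential limit coming from compactness of~$\bP^n$ into a genuine limit of~$\zeta_t$. The bound on~$\mu$ is precisely what makes this work, because it controls the inverse of the restricted differential that appears in the implicit ODE for $\zeta_t$; the rest of the argument then unrolls cleanly from the quantitative Lipschitz control of~$\mu$ provided by Proposition~\ref{prop:lipcond} and from the standard implicit function theorem.
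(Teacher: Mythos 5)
Your argument is correct and follows essentially the same route as the paper: the bound on $\mu(f_t,\zeta_t)$ turns the implicit ODE for $\zeta_t$ into a uniform Lipschitz bound, the branch extends continuously to the endpoint, and maximality of $J$ is contradicted. The only difference is that you spell out the final step (finiteness of $\mu$ at the limit point via Proposition~\ref{prop:lipcond} and the implicit function theorem to push past the endpoint), which the paper compresses into the assertion that $J$ is closed in $I$ and hence equals $I$ by connectedness.
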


\begin{proof}
  Without loss of generality, we may assume that~$I$ is compact, so that~$\|\dot f_t\|$ is bounded on~$I$.
  Let~$M$ be the supremum of~$\mu(f_t,\zeta_t) \|\dot f_t\|$ on~$J$.  From
  the construction of~$\zeta_t$ with the implicit function theorem we see
  that~$t\in J\mapsto \zeta_t$ is~$M$-Lipschitz continuous. Hence the
  map~$t\in J\mapsto \zeta_t$ extends to a continuous map on~$\overline J$.
  Thus~$J$ is closed in~$I$,
  and~$I=J$ because~$J$ is also open.
\end{proof}

\begin{proposition}
  \label{prop:condroot}
  Let~$(f,\zeta)\in V$, $g\in \bP(\cH)$ and~$0< \varepsilon \leq \frac17$.
  If~$D^{3/2}\mu(f,\zeta)^2 d_\bP(f,g) \leq
  \frac{\varepsilon}{4(1+\varepsilon)}$, then:
  \begin{enumerate}[(i)]
    \item\label{it:condroot:ex} there exists a unique root~$\eta$ of~$g$ such that $d_\bP(\zeta,\eta) \leq (1+\varepsilon) \mu(f,\zeta) d_\bP(f,g)$;
    \item\label{it:condroot:mu} $(1+\varepsilon)^{-1} \mu(f,\zeta) \leq \mu(g,\eta) \leq (1+\varepsilon) \mu(f,\zeta)$;
    \item\label{it:condroot:app} $\zeta$ approximates~$\eta$ as a root of~$g$ and $\eta$ approximates~$\zeta$ as a root of~$f$.
  \end{enumerate}
\end{proposition}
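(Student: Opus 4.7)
My plan is to prove all three assertions with a single homotopy-continuation argument along the geodesic from $f$ to $g$ in $\bP(\cH)$. Parametrizing this geodesic by arc length on $[0, T]$ with $T \eqdef d_\bP(f,g)$, one has $f_0 = f$, $f_T = g$ and $\|\dot f_t\| = 1$. The implicit function theorem provides a local lift $\zeta_t$ of $\zeta_0 = \zeta$, and from the definition of~$\mu$ the standard speed bound is $\|\dot \zeta_t\| \leq \mu(f_t, \zeta_t)$.

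The core of the argument is a bootstrap. Let $T^{\ast}$ be the supremum of those $t \in [0,T]$ on which $\zeta_s$ is defined and $\mu(f_s, \zeta_s) \leq (1+\varepsilon) \mu(f,\zeta)$ throughout $[0,t]$. Assuming this bound on $[0,t]$, integrating the speed bound gives $d_\bP(\zeta, \zeta_t) \leq (1+\varepsilon)\mu(f,\zeta)\, t$. Using $D, \mu(f,\zeta) \geq 1$ and the hypothesis $D^{3/2} \mu(f,\zeta)^2 T \leq \frac{\varepsilon}{4(1+\varepsilon)}$, a short computation shows that both $\mu(f,\zeta) D^{1/2} d_\bP(f, f_t)$ and $\mu(f,\zeta) D^{3/2} d_\bP(\zeta, \zeta_t)$ are at most $\varepsilon/4$. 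Proposition~\ref{prop:lipcond} then yields $\mu(f_t, \zeta_t) \leq (1+\varepsilon) \mu(f,\zeta)$, so the defining condition for $T^\ast$ holds with strict margin. This openness, combined with Lemma~\ref{lem:maxhomcont} (which extends $\zeta_t$ as long as $\mu$ stays bounded), forces $T^\ast = T$. Setting $\eta \eqdef \zeta_T$ yields existence of $\eta$ in~(i), the distance bound of~(i), and the upper bound in~(ii).

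For the lower bound in~(ii), I would apply Proposition~\ref{prop:lipcond} with the roles of $(f,\zeta)$ and $(g,\eta)$ exchanged; the estimates on $d_\bP(f,g)$ and $d_\bP(\zeta, \eta)$ coupled with the already-obtained upper bound on $\mu(g,\eta)$ verify the hypothesis. For uniqueness in~(i): given another root $\eta'$ of $g$ satisfying the distance bound, Proposition~\ref{prop:lipcond} applied to $((f,\zeta), (g,\eta'))$ gives $\mu(g,\eta') \leq (1+\varepsilon)\mu(f,\zeta)$, after which the same bound on $D^{3/2}\mu(g,\eta') d_\bP(\zeta, \eta')$ as for $\eta$ shows that the hypothesis of Theorem~\ref{thm:gthm} is met by $(g,\eta')$ and $\zeta$; hence $\zeta$ approximates both $\eta$ and $\eta'$, and uniqueness of the Newton limit forces $\eta' = \eta$.

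For~(iii), I simply verify the hypothesis of Theorem~\ref{thm:gthm} twice. With $\zeta$ as the candidate and $(g, \eta)$ as the pair, the chain $D^{3/2}\mu(g, \eta) d_\bP(\zeta, \eta) \leq D^{3/2}(1+\varepsilon)^2 \mu(f,\zeta)^2 d_\bP(f,g) \leq \tfrac{(1+\varepsilon)\varepsilon}{4} \leq \tfrac13$ holds thanks to $\varepsilon \leq \tfrac17$. Symmetrically, for $\eta$ and $(f, \zeta)$, one gets $D^{3/2}\mu(f, \zeta) d_\bP(\zeta, \eta) \leq (1+\varepsilon) D^{3/2}\mu(f,\zeta)^2 d_\bP(f,g) \leq \tfrac{\varepsilon}{4} \leq \tfrac13$. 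Theorem~\ref{thm:gthm} then delivers both approximation statements. The principal obstacle is the bootstrap: the Lipschitz bound of Proposition~\ref{prop:lipcond} must be re-established with a strict margin along the path, which is precisely why the hypothesis features the sharpening factor $(1+\varepsilon)^{-1}$ in the denominator.
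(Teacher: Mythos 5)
Your argument is essentially the paper's own proof: a geodesic from $f$ to $g$, the lifted path $\zeta_t$ with the speed bound $\|\dot\zeta_t\|\leq\mu(f_t,\zeta_t)\|\dot f_t\|$, a bootstrap via Proposition~\ref{prop:lipcond} (the paper's bootstrap set is defined by the distance condition $D^{3/2}\mu(f,\zeta)d_\bP(\zeta,\zeta_t)\leq\varepsilon/4$ rather than by the $\mu$-bound, but the open-closed argument and the appeal to Lemma~\ref{lem:maxhomcont} are the same), and two applications of Theorem~\ref{thm:gthm} for~(iii), with identical numerics. Your explicit uniqueness argument (a second root $\eta'$ within the distance bound would also be the associated root of the Newton sequence started at $\zeta$, forcing $\eta'=\eta$) is correct and in fact spells out what the paper leaves implicit.

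One step does not check out as written: the lower bound in~(ii) obtained by exchanging the roles of $(f,\zeta)$ and $(g,\eta)$. The hypothesis of Proposition~\ref{prop:lipcond} in that direction requires $\mu(g,\eta)\,D^{3/2}d_\bP(\zeta,\eta)\leq\varepsilon/4$, but your estimates only give $\mu(g,\eta)\,D^{3/2}d_\bP(\zeta,\eta)\leq(1+\varepsilon)^2D^{3/2}\mu(f,\zeta)^2d_\bP(f,g)\leq(1+\varepsilon)\varepsilon/4$, which exceeds $\varepsilon/4$; moreover at $\varepsilon=\tfrac17$ the effective parameter $(1+\varepsilon)\varepsilon=\tfrac{8}{49}$ violates the cap $\varepsilon\leq\tfrac17$ in Proposition~\ref{prop:lipcond}, and even if it applied it would only yield the weaker bound $(1+(1+\varepsilon)\varepsilon)^{-1}\mu(f,\zeta)$. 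The detour is unnecessary: Proposition~\ref{prop:lipcond} is two-sided, so the direct application at $t=T$ (with base point $(f,\zeta)$), whose hypothesis you have already verified in the bootstrap, gives $(1+\varepsilon)^{-1}\mu(f,\zeta)\leq\mu(g,\eta)\leq(1+\varepsilon)\mu(f,\zeta)$ at once --- which is exactly how the paper concludes~(ii).
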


\begin{proof}
  Let~$t\in[0,1]\mapsto f_t\in\bP(\cH)$ be a geodesic path such that~$f_0 =f$,
  $f_1=g$ and~$\| \dot f_t \| = d_\bP(f,g)$.  Let~$t\in J\mapsto\zeta_t$ be the
  homotopy continuation associated to this path starting from the root~$\zeta$
  and defined as above on a maximal interval~$J\subset [0,1]$.  Let~$\mu_t$
  denote~$\mu(f_t,\zeta_t)$.

  For all~$t\in J$ we know that~$\|\dot\zeta_t\| \leq \mu_t \| \dot f_t
  \|$,\footcite[Corollary~16.14 and Inequality~(16.12)]{BurCuc13} so that 
  \begin{equation}
    d_\bP(\zeta_0,\zeta_t) \leq \int_0^t \|\dot\zeta_u\| du \leq d_\bP(f,g) \int_0^t \mu_u du.
    \label{eqn:proof:condroot}
  \end{equation}

  Let~$J'$ be the closed subinterval of~$J$ defined by~$J'=\left\{ t\in J \st \forall t' \leq
  t, D^{3/2} \mu_0 d_\bP(\zeta_0,\zeta_{t'}) \leq \frac\varepsilon4 \right\}$.
  For all~$t\in J'$ we have~$D^{3/2} \mu_0
  d_\bP(\zeta_0,\zeta_t) \leq \frac\varepsilon4$, by definition, and~$D^{1/2}\mu_0
  d_\bP(f_0,f_t) \leq D^{3/2}\mu^2_0 d_\bP(f,g) \leq \frac\varepsilon4$, by hypothesis.
  Thus, Proposition~\ref{prop:lipcond} ensures that
  \begin{equation}
    (1+\varepsilon)^{-1} \mu_0 \leq \mu_t \leq (1+\varepsilon)\mu_0\text{, for all~$t\in J'$.}
    \label{eqn:proof:condroot:mu}
  \end{equation}

  Thanks to Inequality~\eqref{eqn:proof:condroot} we conclude
  that~$d_\bP(\zeta_0,\zeta_t) \leq (1+\varepsilon) t\, d_\bP(f,g) \mu_0$, for all~$t\in
  J'$, so that~$D^{3/2} \mu_0 d_\bP(\zeta_0,\zeta_t) \leq \frac{t\varepsilon}{4}$, using the assumption~$D^{3/2}\mu(f,\zeta)^2 d_\bP(f,g) \leq
  \frac{\varepsilon}{4(1+\varepsilon)}$.
  This proves that~$J'$ is open in~$J$.  Since it is also closed, we
  have~$J'=J$.  Since~$\mu_t$ is bounded on~$J'$, by
  Inequality~\eqref{eqn:proof:condroot:mu}, Lemma~\ref{lem:maxhomcont} implies
  that~$J'=J = [0,1]$.  Now, Inequalities~\eqref{eqn:proof:condroot}
  and~\eqref{eqn:proof:condroot:mu} imply that~$d_\bP(\zeta_0,\zeta_1)\leq
  (1+\varepsilon) d_\bP(f,g) \mu_0$. This proves~\ref{it:condroot:ex} and~\ref{it:condroot:mu}
  follows from~\eqref{eqn:proof:condroot:mu} for~$t=1$.

  To prove that~$\eta$ approximates~$\zeta$ as a root of~$f$, it is enough to check that
  \[ D^{3/2} \mu(f,\zeta) d_\bP(\zeta,\eta) \leq (1+\varepsilon) D^{3/2} \mu(f,\zeta)^2 d_\bP(f,g) \leq \frac{\varepsilon}{4} \leqcalc \frac{1}{3}, \]
  by Theorem~\ref{thm:gthm}.
  To prove that~$\zeta$ approximates~$\eta$ as a root of~$g$, we check that
  \[ D^{3/2} \mu(g,\eta) d_\bP(\zeta,\eta) \leq (1+\varepsilon)^2 D^{3/2} \mu(f,\zeta)^2 d_\bP(f,g) \leq \frac{\varepsilon(1+\varepsilon)}{4} \leqcalc \frac{1}{3}. \]
  This proves~\ref{it:condroot:app} and the lemma.
\end{proof}

Throughout this article, let~$\varepsilon=\frac{1}{13}$, $A = \frac{1}{52}$, $B=\frac{1}{101}$ and $B'=\frac{1}{65}$.\marginpar{$A$, $B$, $B'$, $\varepsilon$}
The main result that allows computing a homotopy continuation with discrete
jumps is the following:
\begin{lemma}\label{lem:hcmain}
  For any~$(f,\zeta)\in V$ and~$g\in \polsys$ and for any~$z\in \Proj$,   
  if~${D^{3/2} \mu(f,z)}d_\bP(z,\zeta) \leq {A}$ and~${D^{3/2}\mu(f,z)^2} d_\bP(f,g) \leq {B'}$ then:
  \begin{enumerate}[(i)]
    \item\label{it:zappreta} $z$ is an approximate root of~$g$ with some associated root~$\eta$;
    \item\label{it:condb} $(1+\varepsilon)^{-2} \mu(f, z) \leq \mu(g,\eta) \leq (1+\varepsilon)^2 \mu(f, z)$;
    \item\label{it:droot} $D^{3/2} \mu(g,\eta) d_\bP(z,\eta) \leq \frac{1}{23}$.
  \end{enumerate}
  If moreover~${D^{3/2}\mu(f,z)^2}d_\bP(f,g) \leq {B}$ then:
  \begin{enumerate}[(i),resume]
    \item\label{it:rec} ${D^{3/2} \mu(g,z')} \displaystyle d_\bP(z', \eta) \leq {A}$, where~$z'=\cN(g,z)$.
  \end{enumerate}
\end{lemma}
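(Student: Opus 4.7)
The plan is to chain Proposition~\ref{prop:lipcond} and Proposition~\ref{prop:condroot} to pass information from $(f,z)$ to $(f,\zeta)$ to $(g,\eta)$, and then invoke Theorem~\ref{thm:gthm} and the defining property of Newton's iteration on an approximate root for $z'$. The argument is essentially a careful book-keeping of the constants, with no genuine conceptual difficulty.

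First, I apply Proposition~\ref{prop:lipcond} to the pair $(f,z),(f,\zeta)$: the first hypothesis $D^{3/2}\mu(f,z)\,d_\bP(z,\zeta) \leq A = \varepsilon/4$ is exactly the required smallness, so $(1+\varepsilon)^{-1}\mu(f,z) \leq \mu(f,\zeta) \leq (1+\varepsilon)\mu(f,z)$. Squaring and using the second hypothesis yields $D^{3/2}\mu(f,\zeta)^2\,d_\bP(f,g) \leq (1+\varepsilon)^2 B'$, which a short calculator check verifies is at most $\varepsilon/(4(1+\varepsilon)) = 1/56$. Proposition~\ref{prop:condroot} therefore applies to $(f,\zeta,g)$ and produces a unique root $\eta$ of $g$ close to $\zeta$, with $d_\bP(\zeta,\eta) \leq (1+\varepsilon)\mu(f,\zeta)\,d_\bP(f,g)$ and $(1+\varepsilon)^{-1}\mu(f,\zeta) \leq \mu(g,\eta) \leq (1+\varepsilon)\mu(f,\zeta)$. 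Composing with the estimates of step~1 gives (ii).

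For (iii), I split $d_\bP(z,\eta) \leq d_\bP(z,\zeta) + d_\bP(\zeta,\eta)$ and multiply by $D^{3/2}\mu(g,\eta)$. Using the $\mu$-comparisons from the previous step, the first summand is bounded by $(1+\varepsilon)^2 A$ and the second by $(1+\varepsilon)^4 B'$; a calculator check confirms $(1+\varepsilon)^2 A + (1+\varepsilon)^4 B' \leq 1/23$. Since $1/23 \leq 1/3$, Theorem~\ref{thm:gthm} applied at $(g,\eta)$ proves (i).

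For (iv), since $z$ is now known to be an approximate root of $g$ with associated root $\eta$, the definition of approximate root gives $d_\bP(z',\eta) \leq \tfrac{1}{2}\,d_\bP(z,\eta)$. Replacing $B'$ by $B$ in the estimate above yields $D^{3/2}\mu(g,\eta)\,d_\bP(z',\eta) \leq \tfrac{1}{2}\bigl[(1+\varepsilon)^2 A + (1+\varepsilon)^4 B\bigr]$, which I expect to be $\leq \varepsilon/4$ by a calculator check. Proposition~\ref{prop:lipcond} then applies at $(g,\eta),(g,z')$ and gives $\mu(g,z') \leq (1+\varepsilon)\mu(g,\eta)$; multiplying through yields $D^{3/2}\mu(g,z')\,d_\bP(z',\eta) \leq (1+\varepsilon)\cdot\tfrac{1}{2}\bigl[(1+\varepsilon)^2 A + (1+\varepsilon)^4 B\bigr]$, which a final calculator check confirms is at most $A$. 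The only obstacle is that the constants $A=1/52$, $B=1/101$, $B'=1/65$, $\varepsilon=1/13$ have been calibrated so that every inequality in the chain is tight, leaving almost no slack; the numerical checks must therefore be done carefully.
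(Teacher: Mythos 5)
Your proof is correct and follows the same route as the paper: Proposition~\ref{prop:lipcond} to compare $\mu(f,z)$ with $\mu(f,\zeta)$, Proposition~\ref{prop:condroot} to produce $\eta$ and give (ii), the triangle inequality plus Theorem~\ref{thm:gthm} for (i) and (iii), and the halving property of Newton's step plus one more application of Proposition~\ref{prop:lipcond} for (iv), with the same constants appearing (note $(1+\varepsilon)^2 A + (1+\varepsilon)^4 B'$ is exactly the paper's $(1+\varepsilon)^2(A+(1+\varepsilon)^2 B')$). All your numerical checks do go through, so no changes are needed.
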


\begin{proof}
Firstly, we bound $\mu(f,\zeta)$.  Since~$D^{3/2}\mu(f,z) d_\bP(z, \zeta) \leq
A = \frac{\varepsilon}{4}$, Proposition~\ref{prop:lipcond} gives
\[ (1+\varepsilon)^{-1} \mu(f,\zeta) \leq \mu(f,z) \leq (1+\varepsilon)\mu(f,\zeta). \]

Next, we have~$D^{3/2} \mu(f,\zeta)^2 d_\bP(f,g) \leq (1+\varepsilon)^2 B' \leqcalc
\frac{\varepsilon}{4(1+\varepsilon)}$, thus Proposition~\ref{prop:condroot}
applies and~$\zeta$ is an approximate root of~$g$ with some associated
root~$\eta$ such that $d_\bP(\zeta,\eta) \leq (1+\varepsilon) \mu(f,\zeta)
d_\bP(f,g)$ and~$(1+\varepsilon)^{-1} \mu(f,\zeta) \leq \mu(g,\eta) \leq
(1+\varepsilon)\mu(f,\zeta)$ and this gives~\ref{it:condb}.

Then, we check that~$z$ approximates~$\eta$ as a root of~$g$. Indeed
\[ d_\bP(z,\eta) \leq d_\bP(z,\zeta)+d_\bP(\zeta,\eta)
  \leq \frac{A+(1+\varepsilon)^2 B'}{D^{3/2} \mu(f,z)}
  \leq  \frac{(1+\varepsilon)^2(A+(1+\varepsilon)^2 B')}{D^{3/2}\mu(g,\eta)}. \]
And~$(1+\varepsilon)^2(A+(1+\varepsilon)^2 B') \leqcalc \frac{1}{23} < \frac13$, so
Theorem~\ref{thm:gthm} applies and we obtain~\ref{it:zappreta}
and~\ref{it:droot}.

We assume now that~${D^{3/2}\mu(f,z)^2}d_\bP(f,g) \leq {B}$. All the
inequalities above are valid with~$B'$ replaced by~$B$.  By definition of an
approximate root~$d_\bP(z',\eta)
\leq \frac{1}{2} d_\bP(z,\eta)$, so that
\[ D^{3/2}\mu(g,\eta) d_\bP(z',\eta) \leq \frac12 (1+\varepsilon)^2(A+(1+\varepsilon)^2 B) \leqcalc \frac{\varepsilon}{4}. \]
Thus~$(1+\varepsilon)^{-1} \mu(g,\eta) \leq \mu(g,z') \leq (1+\varepsilon)\mu(g,\eta)$.

To conclude, we have~$D^{3/2}\mu(g,z') d(z',\eta) \leq \frac12 (1+\varepsilon)^3(A+(1+\varepsilon)^2 B) \leqcalc A$.
\end{proof}
Let~$f,g\in\bS(\cH)$, with~$f\neq -g$.
Let~$t\in[0,1]\mapsto \Gamma(g,f,t)$\marginpar{$\Gamma(g,f,t)$} be the geodesic path from~$g$ to~$f$ in~$\bS(\cH)$.
The condition~$f\neq -g$ guarantees that the geodesic path is uniquely determined,
namely
\begin{equation}
  \Gamma(g,f,t) = \frac{\sin\left( (1-t)\alpha \right)}{\sin(\alpha)} g + \frac{\sin(t\alpha)}{\sin(\alpha)} f,
  \label{eqn:defG}
\end{equation}
where~$\alpha = d_\bS(f,g) \in [0,\pi)$ is the angle between~$f$ and~$g$.

Let~$z\in\bP^n$ such that~${D^{3/2} \mu(g,z)}d_{\bP}(z,\eta) \leq {A}$, for
some root~$\eta$ of~$g$.  By Lemma~\ref{lem:hcmain}\ref{it:zappreta}, applied
with~$g=f$ and~$\eta=\zeta$, the point~$z$ is an approximate root of~$g$, with
associated root~$\eta$.  Given~$g$ and~$z$, we can compute an approximate root
of~$f$ in the following way.  Let~$g_0 = g$, $t_0=0$ and by induction on~$k$ we
define
\[ \mu_k = \mu(g_k,z_k),\ t_{k+1} = t_k + \frac{B}{D^{3/2} \mu_k^{2} d_\bS(f,g)}, \ g_{k+1} = \Gamma(g, f, t_{k+1})\ \text{and}\ 
  z_{k+1} = \cN(g_{k+1}, z_k). \]
Let~$K(f,g,z)$,\marginpar{$K(f,g,z)$} or simply~$K$, be the least integer such that~$t_{K+1} > 1$, if
any, and~$K(f,g,z)=\infty$ otherwise.  Let~$\tilde M(f,g,z)$\marginpar{$\tilde M(f,g,z)$} denote the maximum
of all~$\mu_k$ with $0\leq k\leq K$.  Let~$\mop{HC}$\marginpar{$\mop{HC}(f,g,z)$} be the procedure
that takes as input~$f$, $g$ and~$z$ and outputs~$z_K$.
Algorithm~\ref{algo:hc} recapitulates the definition. It terminates if and only
if~$K<\infty$, in which case~$K$ is the number of iterations.
For simplicity, we assume that we can compute exactly the
square root function, the trigonometric functions and the operator norm
required for the computation of~$\mu(f,z)$.  Section~\ref{sec:bss} shows how
to implement things in the BSS model extended with the square root only.

\begin{algo}[tp]
  \centering
  \begin{algorithmic}
    \Function{HC}{$f$, $g$, $z$}
    \State $t\gets {1}/\left(101 D^{3/2} \mu(g, z)^{2} d_\bS(f,g)\right)$
      \While{$1 > t$}
        \State $h \gets \Gamma(g, f, t)$
        \State $z \gets \cN(h, z)$
        \State $t \gets t + {1}/\left(101 D^{3/2} \mu(h, z)^{2} d_\bS(f,g)\right)$        
      \EndWhile
      \State \textbf{return} $z$
    \EndFunction
  \end{algorithmic}
  \caption[]{Homotopy continuation
    \begin{description}
      \item[Input.] $f$, $g \in \bS(\cH)$ and $z\in\bP^n$.
      \item[Precondition.] There exists a root~$\eta$ of~$g$ such that~${52\,D^{3/2}
      \mu(g,z)} d_{\bP}(z,\eta) \leq {1}$.
      \item[Output.] $w\in\bP^n$
      \item[Postcondition.] $w$ is an approximate root of~$f$.
    \end{description}
  }
  \label{algo:hc}
\end{algo}

Let~$h_t = \Gamma(f,g,t)$ and let~$t\in J\mapsto \zeta_t$ be the homotopy
continuation associated to~$t\in[0,1]\mapsto h_t$, where~$\eta_0$ is the
associated root of~$z$, defined on a maximal subinterval~$J\subset[0,1]$.  Let\marginpar{$I_p(f,g,z)$, $M(f,g,z)$}
\begin{equation}
 M(f,g,z) \eqdef \sup_{t\in J} \mu(f_t,\zeta_t) \quad \text{and}\quad I_p(f,g,z)\eqdef  \int_J \mu(h_t,\eta_t)^p \ud t.  
  \label{eqn:def-M-Ip}
\end{equation}

The behavior of the procedure~$\mop{HC}$ can be controlled in terms of the
integrals~$I_p(f,g,z)$. It is one of the corner stone of the complexity theory
of homotopy continuation methods.
The following estimation of the maximum of the condition number, along a homotopy path,
in terms of the third moment of the condition number seems to be original.
It will be important for the average complexity analysis.

\begin{proposition}
  If~$J=[0,1]$ then~$M(f,g,z) \leq 151\, D^{3/2} I_3(f,g,z)$.
  \label{lem:maxmu}
\end{proposition}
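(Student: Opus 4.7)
Since $J = [0,1]$ is compact and $t \mapsto \mu(h_t, \eta_t)$ is continuous, the supremum $M$ is attained at some $t^* \in [0,1]$. The plan is to use the Lipschitz property of $\mu$ (Proposition~\ref{prop:lipcond}) to exhibit an interval around $t^*$ on which $\mu$ stays close to $M$, then lower-bound $I_3$ by integrating the cube over this interval.

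Write $\alpha = d_\bS(f,g) \leq \pi$. For any $t \in [0,1]$, the spherical geodesic has constant speed $\alpha$, so $d_\bP(h_{t^*}, h_t) \leq \alpha |t-t^*|$; and using $\|\dot\eta_u\| \leq \mu(h_u, \eta_u) \|\dot h_u\| \leq M\alpha$ (valid because $\mu \leq M$ globally), one gets $d_\bP(\eta_{t^*}, \eta_t) \leq M\alpha |t-t^*|$. Since $MD \geq 1$, the quantity $D^{3/2} d_\bP(\eta_{t^*}, \eta_t)$ dominates $D^{1/2} d_\bP(h_{t^*}, h_t)$ in the maximum of Proposition~\ref{prop:lipcond}. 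Picking some $\varepsilon' \in (0, \tfrac{1}{7}]$ to be optimized, the hypothesis $M \cdot D^{3/2} M\alpha |t-t^*| \leq \varepsilon'/4$ holds whenever $|t-t^*| \leq \delta := \varepsilon'/(4 M^2 D^{3/2} \alpha)$, and then $\mu(h_t, \eta_t) \geq M/(1+\varepsilon')$.

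Let $I_\delta := [t^*-\delta, t^*+\delta] \cap [0,1]$; its length is at least $\min(\delta, 1)$, with equality when $t^*$ sits at an endpoint of $[0,1]$ and only a one-sided interval remains. Integrating, $I_3 \geq (M/(1+\varepsilon'))^3 \min(\delta, 1)$. When $\delta \leq 1$, substituting the value of $\delta$ and using $\alpha \leq \pi$ yields
\[
 M \leq \frac{4 \pi (1+\varepsilon')^3}{\varepsilon'}\, D^{3/2} I_3.
\]
The function $\varepsilon' \mapsto 4\pi(1+\varepsilon')^3/\varepsilon'$ is decreasing on $(0, \tfrac{1}{7}]$, and its value at $\varepsilon' = 1/7$, namely $28\pi (8/7)^3 < 132$, is already below $151$. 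When $\delta > 1$ we instead have $I_3 \geq M^3/(1+\varepsilon')^3$, and since $M^2 D^{3/2} \geq 1 \geq (1+\varepsilon')^3/151$, the target bound $M \leq 151 D^{3/2} I_3$ follows directly. The main delicate point is to handle the boundary of $[0,1]$ correctly (losing a factor of $2$ in the interval length in the worst case) and to track the constants carefully through the optimization of $\varepsilon'$.
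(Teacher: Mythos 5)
Your proof is correct and follows essentially the same route as the paper: locate the maximizer of $\mu$ along the path, use the Lipschitz behaviour of the condition number to show $\mu\geq M/(1+\varepsilon')$ on an interval of length about $\varepsilon'/(4M^2D^{3/2}\alpha)$, and integrate $\mu^3$ over it. The only (harmless) difference is that you invoke Proposition~\ref{prop:lipcond} directly, bounding $d_\bP(\eta_{t^*},\eta_t)$ via $\|\dot\eta_u\|\leq\mu(h_u,\eta_u)\|\dot h_u\|\leq M\alpha$, where the paper applies Proposition~\ref{prop:condroot} (which packages that step), yielding slightly different but equally sufficient constants.
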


\begin{proof}
  Let~$\varepsilon=\frac{1}{7}$ and let~$s\in [0,1]$ such
  that~$\mu(f_s,\zeta_s)$ is maximal.  For all~$t\in [0,1]$, $d_\bS(f_s,f_t)
  \leq |t-s| d_\bS(f,g)$.  Thus, if
  \begin{equation}
    |t-s| \leq \frac{\varepsilon}{4(1+\varepsilon) D^{3/2} \mu(f_s,\zeta_s)^2 d_\bS(f,g)},
    \label{eqn:proof:maxmu}
  \end{equation}
  then~$\mu(f_t,\zeta_t) \geq (1+\varepsilon)^{-1} \mu(f_s,\zeta_s)$, by
  Proposition~\ref{prop:condroot}.  Since~$d_\bS(f,g) \leq \pi$, the diameter
  of the interval~$H$ of all~$t\in [0,1]$ satisfying
  Inequality~\eqref{eqn:proof:maxmu} is at least~$\frac{\varepsilon}{4\pi
    (1+\varepsilon) D^{3/2} \mu(f_s,\zeta_s)^2}$.  Thus
  \[ \int_0^1 \mu(f_t,\zeta_t)^3 \ud t \geq \int_H \frac{\mu(f_s,\zeta_s)^3}{(1+\varepsilon)^3} \ud t \geq \frac{\varepsilon\, \mu(f_s,\zeta_s)}{4 \pi (1+\varepsilon)^4 D^{3/2}} \geqcalc \frac{1}{151} \frac{\mu(f_s,\zeta_s)}{D^{3/2}}. \qedhere \]
\end{proof}

\begin{theorem}[{Shub\footcite{Shu09}}]
  \label{thm:complexity-hc}
  With the notations above,  if~${D^{3/2} \mu(g,z)} d_{\bP}(z,\eta) \leq {A}$ then:
  \begin{enumerate}[(i)]
    \item \label{it:Kinfty} $\mop{HC}(f,g,z)$ terminates if and only if~$I_2(f,g,z)$ is finite, in which case~$J=[0,1]$;
  \end{enumerate}
  If moreover $\mop{HC}(f,g,z)$ terminates then:
  \begin{enumerate}[(i),resume]
    \item \label{it:Mtilde} $(1+\varepsilon)^{-2} M(f,g,z) \leq \tilde M(f,g,z) \leq (1+\varepsilon)^2 M(f,g,z)$.
    \item \label{it:boundK} $K(f,g,z) \leq 136\, D^{3/2} d_{\bS}(f,g) I_2(f,g,z)$;
    \item \label{it:HCapprox} $\mop{HC}(f,g,z)$ is an approximate root of~$f$;
    \item \label{it:HCapproxb} $D^{3/2}\mu(f,\zeta)d_\bP(\mop{HC}(f,g,z),\zeta) \leq \frac{1}{23}$, where~$\zeta$ is the associated root of~$\mop{HC}(f,g,z)$.
  \end{enumerate}
\end{theorem}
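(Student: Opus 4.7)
The approach is to induct on the step index $k$, using Lemma~\ref{lem:hcmain} as the workhorse at each interior step, and applying the lemma one last time to close the path at $t=1$.

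First, I maintain the invariant $(\star_k)$: the point $z_k$ is an approximate root of $g_k$, its associated root $\eta_k$ coincides with the homotopy continuation $\zeta_{t_k}$ of $\eta_0 = \eta$ along the path $t\mapsto g_t = \Gamma(g,f,t)$, and $D^{3/2}\mu_k\, d_\bP(z_k,\eta_k)\leq A$. The base case $(\star_0)$ is the precondition. For the inductive step, the step-size formula yields $d_\bS(g_k,g_{k+1}) = (t_{k+1}-t_k)\,d_\bS(f,g) = B/(D^{3/2}\mu_k^2)$, so $D^{3/2}\mu_k^2\,d_\bP(g_k,g_{k+1})\leq B$. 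Lemma~\ref{lem:hcmain}\ref{it:rec}, applied with $(f,z)\mapsto(g_k,z_k)$ and $g\mapsto g_{k+1}$, then yields $(\star_{k+1})$; the uniqueness clause of Proposition~\ref{prop:condroot}\ref{it:condroot:ex} identifies the produced associated root with $\zeta_{t_{k+1}}$. In particular $[0,t_k]\subseteq J$ for every $k\leq K$.

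To establish~\ref{it:HCapprox} and~\ref{it:HCapproxb}, assume HC terminates. Then $1-t_K < t_{K+1}-t_K$ gives $D^{3/2}\mu_K^2\,d_\bP(g_K,f)\leq B$, and a final application of Lemma~\ref{lem:hcmain} with $(f,z)\mapsto(g_K,z_K)$ and $g\mapsto f$---whose hypotheses hold by $(\star_K)$ and this estimate---produces, via items~\ref{it:zappreta} and~\ref{it:droot}, an associated root $\zeta$ of $f$ approximated by $z_K$ with $D^{3/2}\mu(f,\zeta)\,d_\bP(z_K,\zeta)\leq 1/23$. This extends the homotopy continuation to $t=1$, so $J=[0,1]$. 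For~\ref{it:boundK}, Proposition~\ref{prop:lipcond} applied on each $[t_k,t_{k+1}]$ (its hypothesis is verified by the step choice together with $(\star_k)$) yields $\mu(g_t,\zeta_t)\geq(1+\varepsilon)^{-2}\mu_k$, whence
\[ \int_{t_k}^{t_{k+1}}\mu(g_t,\zeta_t)^2\,\mathrm{d}t \geq \frac{B}{(1+\varepsilon)^4\, D^{3/2}d_\bS(f,g)}. \]
Summing over $k=0,\dots,K$ and using $(1+\varepsilon)^4/B = (14/13)^4\cdot 101 \leqcalc 136$ proves~\ref{it:boundK}.

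This uniform per-step lower bound also shows that $K=\infty$ forces $I_2=\infty$ (the sum diverges on $[0,\sup_k t_k)\subseteq J$); conversely if $K<\infty$ then $J=[0,1]$ is compact and $\mu(g_t,\zeta_t)$ is continuous on it, so $I_2<\infty$, proving~\ref{it:Kinfty}. For~\ref{it:Mtilde}, symmetric applications of Proposition~\ref{prop:lipcond} give $\mu(g_t,\zeta_t)\leq(1+\varepsilon)\mu(g_{t_k},\zeta_{t_k})\leq(1+\varepsilon)^2\mu_k\leq(1+\varepsilon)^2\tilde M$ for $t\in[t_k,t_{k+1}]$, while $\tilde M = \max_k\mu_k \leq (1+\varepsilon)\max_k\mu(g_{t_k},\zeta_{t_k})\leq(1+\varepsilon)M$.

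The main subtlety will be ensuring that the \emph{some} associated root delivered by Lemma~\ref{lem:hcmain} at each step is in fact the single-branch homotopy continuation $\zeta_{t_k}$, which requires invoking the uniqueness clause of Proposition~\ref{prop:condroot}\ref{it:condroot:ex}, together with the bookkeeping of the constants $A,B,B',\varepsilon$ so that the strong form~\ref{it:rec} of Lemma~\ref{lem:hcmain} applies at each of the $K$ interior steps and the weak form~\ref{it:zappreta}--\ref{it:droot} applies at the final reduction from $g_K$ to $f$.
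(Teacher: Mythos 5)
Your proof follows essentially the same route as the paper's: you propagate the invariant $D^{3/2}\mu_k\, d_\bP(z_k,\zeta_{t_k})\leq A$ by Lemma~\ref{lem:hcmain}\ref{it:rec}, compare $\mu(h_t,\zeta_t)$ with $\mu_k$ on each step interval to get the per-step integral bounds that yield \ref{it:Kinfty}--\ref{it:boundK} with the same constant $(1+\varepsilon)^4/B\leqcalc 136$, and conclude \ref{it:HCapprox}--\ref{it:HCapproxb} by one last application of Lemma~\ref{lem:hcmain} from $(g_K,z_K)$ to $f$, exactly as in the paper. The only nitpick is that the per-step lower bound should be summed over $k=0,\dotsc,K-1$ (the $K$ full steps contained in $[0,t_K]$), since $[t_K,t_{K+1}]$ sticks out of $[0,1]$ and the bound does not apply to that last partial step; this is precisely the paper's inequality and still gives $K\leq 136\, D^{3/2} d_\bS(f,g) I_2(f,g,z)$.
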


\begin{proof}
  Let~$\eta_k$ denote~$\zeta_{t_k}$.
  Since~${D^{3/2} \mu_k^{2}}d_\bP(g_k,g_{k+1}) \leq {B}$ for
  all~$k\geq 0$, Lemma~\ref{lem:hcmain}\ref{it:rec} proves, by induction
  on~$k$  that~${D^{3/2} \mu_k}d_\bP(z_k,\eta_{k}) \leq {A}$ for any~$k\geq 0$.
 
  Assume that~$[0,t_k]\subset J$ for some~$k\geq 0$ and let~$t\in
  [t_k,t_{k+1}]\cap J$ so that
  \[ D^{3/2} \mu_k^2 d(g_k,h_t) \leq D^{3/2} \mu_k^2 d(g_k,g_{k+1}) \leq {B}. \]
  Because~$D^{3/2} \mu_kd(z_k,\eta_{k}) \leq
  {A}$, Lemma~\ref{lem:hcmain}\ref{it:condb} applies to $(g_k,\eta_{k})$,
  $h_t$ and~$z_k$ and asserts that
  \begin{equation}
    (1+\varepsilon)^{-2} \mu_k\leq\mu(h_t,\zeta_t) \leq (1+\varepsilon)^2 \mu_k.
    \label{eqn:proof:hc:mu}
  \end{equation}
  By definition $\mu_k^2 (t_{k+1}-t_k) = \frac{B}{D^{3/2}d_\bS(f,g)}$, so
  integrating over~$t$ leads to
  \begin{gather}
    \label{eqn:proof:Kint:a}
    \int_{0}^{t_k} \mu(h_t,\zeta_t)^2 \ud t \geq (1+\varepsilon)^{-4}\sum_{j=0}^{k-1} \mu_j^2 (t_{j+1}-t_j)
    = \frac{k B}{(1+\varepsilon)^4 D^{3/2}d_\bS(f,g)},\\
    \label{eqn:proof:Kint:b}    
    \text{and}\quad \int_{0}^{\sup J} \mu(h_t,\zeta_t)^2 \leq (1+\varepsilon)^4 \sum_{j=0}^{k} \mu_j^2 (t_{j+1}-t_j)
= \frac{(1+\varepsilon)^4  (k+1) B}{D^{3/2}d_\bS(f,g)}.
  \end{gather}
    
  Assume now that~$I_2(f,g,z)$ is finite.  The left-hand side of
  Inequality~\eqref{eqn:proof:Kint:a} is finite so there exists a~$k$ such
  that~$t_{k+1}\not\in J$.  But then Inequalities~\eqref{eqn:proof:hc:mu} shows
  that~$\mu_t$ is bounded on~$J$ which implies, Lemma~\ref{lem:maxhomcont}
  that~$J=[0,1]$. And since~$t_{k+1} \not\in J$, this proves that~$K$ is finite.

  Conversely, assume that~$K$ is finite, \emph{i.e.} $\mop{HC}(f,g,z)$ terminates.
  Then there exists a maximal~$k$ such that~$[0,t_k]\subset J$
  and thus for all~$t\in J$
  \[ \mu(h_t,\zeta_t) \leq (1+\varepsilon)^2 \max_{j\leq k} \mu(g_k,z_k). \]
  So~$\mu(h_t,\zeta_t)$ is bounded on~$J$, which implies that~$J=[0,1]$, and
  thus~$k=K$.  Inequality~\eqref{eqn:proof:Kint:b} then shows that~$I_2(f,g,z)$
  is finite, which concludes the proof of~\ref{it:Kinfty}.
  We keep assuming that~$K$ is finite.
  Inequality~\eqref{eqn:proof:hc:mu} shows~\ref{it:Mtilde}.
  Since~$[0,t_K]\subset [0,1]$, by definition,
  Inequalities~\eqref{eqn:proof:Kint:a} and~\eqref{eqn:proof:Kint:b} shows
  that
  \[ \frac{1}{B(1+\varepsilon)^4} D^{3/2} d_{\bS}(f,g) I_2(f,g,z) - 1 \leq  K \leq \frac{(1+\varepsilon)^4}{B} D^{3/2} d_{\bS}(f,g) I_2(f,g,z). \]
  We check that~$\frac{(1+\varepsilon)^4}{B} \leqcalc 136$, which gives~\ref{it:boundK}.
  Finally,
  Lemmas~\ref{lem:hcmain}\ref{it:zappreta}  and~\ref{lem:hcmain}\ref{it:droot}
  show that~$z_K$ approximates~$\zeta_1$ as a root of~$f$ and that $D^{3/2}
  \mu(f,\zeta_1) d_\bP(z_K,\zeta_1) \leq \frac{1}{23}$, which
  gives~\ref{it:HCapprox} and~\ref{it:HCapproxb}.
\end{proof}

\subsection{A variant of Beltrán-Pardo randomization}
\label{sec:bp}

An important discovery of Beltrán and Pardo is a procedure to pick a random
system and one of its root simultaneously without actually solving  any
polynomial system. And from the complexity point of view, it turns out that a
random pair~$(g,\eta)\in V$ is a good starting point to perform the homotopy
continuation.

Let~$g \in \SH$ be a \uuv, where the uniform measure is relative to the
Riemannian metric on~$\SH$.  Almost surely~$g$ has finitely many roots
in~$\bP^n$.  Let~$\eta$ be one of them, randomly chosen with the uniform
distribution.  The probability distribution of the random variable~$(g,\eta)\in
V$ is denoted~$\rstd$.\marginpar{$\rstd$}
The purpose of Beltrán and Pardo's
procedure\footcites[\S2.3]{BelPar11}[see also][Chap.~17]{BurCuc13} 
is to generate random pairs~$(g,\eta)$, according to the distribution~$\rstd$
without solving any polynomial system.
We give here a variant
which
requires only a \uuv in~$\bS(\bC^{N}) \simeq \SH$ as the source of randomness.

Let us assume that~$f = (f_1,\dotsc,f_n)\in \SH$ is a \uuv and
write~$f$ as
\[ f_i = c_i x_0^{d_i} + \sqrt{d_i} x_0^{d_i-1} \sum_{j=1}^n a_{i,j} x_i +
  f'_i(x_{0},\dotsc, x_n), \]
for some~$c_i$, $a_{i,j}\in\bC$ and~$f'_i\in H_{d_i}$ such that~$f'_i(e_0) = 0$ and~$\ud f'_i(e_0) = 0$.
Let~$f'=(f'_{1},\dotsc,f'_n)\in \polsys$. By construction, $f'(e_0) = 0$ and~$\ud f'(e_0) = 0$.
Let
\[ M \eqdef
  \begin{pmatrix}
    a_{1,1} & \dotsm & a_{1,n} & c_1 \\
    \vdots & \ddots & \vdots & \vdots \\
    a_{n,1}& \dotsm & a_{n,n} & c_n
  \end{pmatrix} \in \bC^{n\times (n+1)}.
\]
Almost surely, $\ker M$ has dimension~$1$; Let~$\zeta\in\bP^n$ be the point
representing~$\ker M$ and let~$\zeta'\in\bS(\bC^{n+1})$ be the unique
element of~$\ker M \cap \bS(\bC^{n+1})$ whose first nonzero coordinate is a
real positive number.  Let~$\Psi_{M,\zeta'} =
(\Psi_1,\dotsc,\Psi_n)\in \polsys$ be defined by
\begin{equation}
  \Psi_i \eqdef \sqrt{d_i} \left( \sum_{i=0}^n x_i \overline{\zeta'_i} \right)^{d_i-1}\sum_{j=0}^n m_{i,j} x_j,
  \label{equ:defPsi}
\end{equation}
where~$\overline{\zeta'_i}$ denotes the complex conjugation.
By construction~$\Psi_{M,\zeta'}(\zeta) = 0$.  Let~$u\in U(n+1)$, the unitary group
of~$\bC^{n+1}$, such that~$u(e_0) = \zeta$.  The choice of~$u$ is arbitrary but
should depend only on~$\zeta$.  For example, we can choose~$u$, for almost
all~$\zeta$, to be the unique element of~$U(n+1)$ with determinant~$1$ that is
the identity on the orthogonal complement of~$\left\{ e_0,\zeta \right\}$ and
that sends~$e_0$ to~$\zeta$.  Finally, let~$g = f' \circ u^{-1} + \Psi_{M,\zeta'} \in
\polsys$.  By construction~$g(\zeta) = 0$.  We define~$\mop{BP}(f) \eqdef (g,
\zeta)$\marginpar{$\mop{BP}(f)$} which is a point in the solution
variety~$V$\!.

\begin{theorem}
  If~$f\in\SH$ is a \uuv,
  then~$\mop{BP}(f)\sim\rho_{\text{std}}$.
  \label{thm:bprand}
\end{theorem}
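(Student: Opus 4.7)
The key structural fact is that the decomposition $f_i = c_i\, x_0^{d_i} + \sqrt{d_i}\, x_0^{d_i-1}\sum_{j=1}^n a_{i,j}\, x_j + f'_i$ is orthogonal with respect to Weyl's inner product: the three pieces involve pairwise disjoint sets of monomials (and monomials are mutually Weyl-orthogonal), while a direct computation from $\|x_0^{a_0}\cdots x_n^{a_n}\|^2=a_0!\cdots a_n!/(a_0+\dotsb+a_n)!$ shows that $x_0^{d_i}$ and $\sqrt{d_i}\, x_0^{d_i-1} x_j$ all have unit norm. Hence $\cH$ splits orthogonally as $\cH = L\oplus\cH''$, where $L$ is the $n(n+1)$-dimensional subspace with orthonormal coordinates $(c,a)$ and $\cH''$ is the subspace of systems vanishing to order~$2$ at~$e_0$. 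For uniform $f \in \bS(\cH)$, the triple $(c,a,f')$ is therefore jointly uniform on the unit sphere of $\bC^n\oplus\bC^{n\times n}\oplus\cH''$.

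From this orthogonal picture I would establish two claims. \emph{First}, the marginal distribution of $\zeta$ under $\mop{BP}(f)$ is uniform on $\bP^n$. Indeed, the marginal of $M=(c,a)$ is rotationally invariant, hence invariant under the right-multiplication action $M\mapsto M u^*$ for $u\in U(n+1)$ (which preserves the Frobenius norm). Since $\ker(Mu^*)=u(\ker M)$, the distribution of $\zeta=[\ker M]$ on $\bP^n$ is $U(n+1)$-equivariant, and therefore uniform. \emph{Second}, conditionally on $\zeta$, the distribution of $g$ is uniform on $\bS(\cH_\zeta)$, where $\cH_\zeta = \{h \in \cH \st h(\zeta) = 0\}$. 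By $U(n+1)$-equivariance of the whole construction, it suffices to treat the fiber $\zeta=e_0$. There $\zeta' = e_0$, the canonical $u$ is the identity, and the condition $\ker M = \bC e_0$ forces $c = 0$. Substituting into the definition of $\Psi_{M,e_0}$ yields $\Psi_i = \sqrt{d_i}\, x_0^{d_i-1}\sum_{j=1}^n a_{i,j}\, x_j$, precisely the linear-in-$(x_1,\dotsc,x_n)$ part of $f_i$ at $e_0$; combined with $f'$ this gives $g_i = f'_i + \Psi_i = f_i - c_i\, x_0^{d_i} = f_i$. So on the fiber $\zeta=e_0$ the map reduces to $\mop{BP}(f) = (f, e_0)$, and the conditional of the uniform distribution on $\bS(L\oplus\cH'')$ given $c=0$ is uniform on $\bS(\bC^{n\times n}\oplus\cH'') = \bS(\cH_{e_0})$, by the standard disintegration of uniform on a sphere along a linear projection.

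The two claims together show that $\mop{BP}(f)$ has the distribution $(g,\zeta)$ with $\zeta$ uniform on $\bP^n$ and, conditionally, $g$ uniform on $\bS(\cH_\zeta)$. The main conceptual obstacle left is to identify this product description with $\rstd$. This is essentially the content of Beltrán and Pardo's original disintegration theorem: by the coarea formula applied to the projection $\pi_2\colon V\to\bP^n$ — whose fibers are the complex linear subspaces $\cH_\zeta$ — together with the fact that a generic $g\in\bS(\cH)$ has exactly $D_1\cdots D_n$ roots in~$\bP^n$, the definition of $\rstd$ as ``uniform $g$, then uniform root'' disintegrates precisely into the product measure above. I would cite this directly rather than reprove it, as it is the key technical input on which the present variant is based.
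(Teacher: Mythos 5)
Your overall architecture (uniform marginal for~$\zeta$, identify the conditional law of~$g$ given~$\zeta$, match with a disintegration of~$\rstd$) could be made to work, but the two central claims you rely on are both false as stated, and the argument you give for the second one is where the error enters. Conditioning on the event~$\zeta(M)=e_0$, where~$\zeta(M)=[\ker M]$, is \emph{not} the same as conditioning the uniform measure on~$\bS(L\oplus\cH'')$ on the linear slice~$c=0$ via orthogonal projection: the disintegration must be taken along the nonlinear map~$M\mapsto[\ker M]$, and its fibers carry a normal-Jacobian density. Concretely, if~$M=(m_0,m_1)\in\bC^{1\times 2}$ is standard Gaussian and one conditions on~$\ker M=\bC e_0$, the law of the remaining entry~$m_1$ has density proportional to~$\abs{m_1}^2 e^{-\abs{m_1}^2}$, not Gaussian; in general the conditional law of the~$n\times n$ block~$\tilde M$ is weighted by~$\abs{\det\tilde M}^2$. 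So the conditional law of~$\mop{BP}(f)$ given~$\zeta$ is \emph{not} uniform on~$\bS(\cH_\zeta)$ (equivalently, on the fiber~$\zeta=e_0$ the map is indeed~$f\mapsto(f,e_0)$, but the conditional input law there is the~$\abs{\det}^2$-weighted one, not the naive slice of the uniform measure).

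Symmetrically, your last paragraph misstates what the Beltrán--Pardo/coarea disintegration gives: $\rstd$ is \emph{not} the product ``$\zeta$ uniform on~$\bP^n$, then~$g$ uniform on~$\bS(\cH_\zeta)$''. The double-fibration argument leaves a density proportional to~$\abs{\det\bigl(\ud g(\zeta)|_{\zeta^\perp}\bigr)}^2$ (suitably rescaled) on each fiber; one can check this directly for~$n=1$, $d_1=2$: writing~$g=a\,x_0^2+\sqrt2\,b\,x_0x_1+c\,x_1^2$, the conditional density of~$g$ under~$\rstd$ given that the selected root is~$e_0$ is proportional to~$\abs{b}^2$ on~$\{a=0\}$, not uniform. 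The truth of the theorem rests precisely on the coincidence of these two determinant-squared weights --- the one produced by conditioning the Gaussian/uniform matrix~$M$ on its kernel direction, and the one appearing in the disintegration of~$\rstd$ --- and that matching is the content of the result of Bürgisser and Cucker (Theorem~17.21(a) of \emph{Condition}) to which the paper reduces, after passing from the sphere to Gaussians by an independent~$\chi$ factor and disposing of the phase~$\lambda$ of~$\zeta'$. As written, your proof asserts the weight is absent on both sides; the conclusion happens to be correct, but neither claim is, so the argument does not establish the theorem. To repair it you would either have to compute the normal Jacobians on both sides and verify they agree, or do as the paper does and invoke the Gaussian statement directly, adding the reduction steps (the~$\chi$ trick, the isometries~$g\mapsto g\circ u^{-1}$ and~$h\mapsto M$, and the elimination of~$\lambda$) that your sketch currently leaves out.
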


\begin{proof}
  We reduce to another variant of Beltrán-Pardo procedure given by Bürgisser and
  Cucker\footcite[Theorem~17.21(a)]{BurCuc13} in the case of Gaussian
  distributions.  Let~$f\in\SH$ be a \uuv, and let~$\chi \in
  [0,\infty)$ be an independent random variable following the \emph{chi}
  distribution with~$2N$ degrees of freedom, so that~$\chi f$ is a centered
  Gaussian variable in~$\cH$ with covariance matrix~$I_{2N}$ (which we call
  hereafter a \emph{\snd}).  For~$\zeta \in \bP^n$, let~$R_\zeta \subset \cH$ be the
  subspace of all~$g$ such that~$g(\zeta)=0$ and~$\ud g(\zeta) = 0$
  and let~$S_\zeta$ be the orthogonal complement of~$R_\zeta$ in~$\cH$.  The
  system~$\chi f$ splits orthogonally as~$\chi f' + \chi h$, where~$\chi
  f'\in R_{e_0}$  and $\chi h \in S_{e_0}$ are independent \snds.

  Let~$M \in \bC^{n\times (n+1)}$, $\zeta\in \bP^n$, $\zeta'\in\bS^n$
  and~$u\in U(n+1)$ be defined in the same way as in the definition
  of~$\BP(f)$.
  The map that gives~$M$ as a function of~$h$ is an isometry~$S_{e_0} \to
  \bC^{n\times(n+1)}$, so~$\chi M$ is a \snd that is independent from~$f'$.  Let~$\lambda \in
  \bS(\bC)$ be an independent \uuv, so that~$\lambda \zeta'$ is uniformly
  distributed in~$\ker M \cap \bS^{n}$ when~$M$ has full rank, which is the
  case with probability~$1$.
  The composition map~$g\in R_{e_0} \mapsto g\circ u^{-1} \in R_\zeta$ is an
  isometry. Thus, conditionally to~$\zeta$, the system~$\chi f'\circ u^{-1}$ is a
  \snd in~$R_\zeta$.  As a consequence, and according to
  Bürgisser and Cucker\footcite[Theorem~17.21(a)]{BurCuc13}, the system
  \[ H \eqdef \chi f'\circ u^{-1} + \Psi_{\chi M,\lambda \zeta'} \]
  is a \snd in~$\cH$ and~$\zeta$ is uniformly distributed among
  its roots.  Hence~$H/\|H\|$ is uniformly distributed in~$\SH$ and~$\left(
  H/\|H\|, \zeta  \right) \sim \rstd$.

  We check easily that~$\| \Psi_{M,\lambda \zeta'} \| = \|M\|_F = \| h \|$,
  where~$\|M\|_F$ denotes the Froebenius matrix norm, that is the usual
  Hermitian norm on~$\bC^{n\times (n+1)}$.  Moreover~$\|f'\circ u^{-1} \| =
  \|f'\|$, this is the fundamental property of Weyl's inner product on~$\cH$.
  Thus~$\|H\|=\|f\| = \chi$, and in turn
  \[  \left(  f'\circ u^{-1} + \Psi_{ M,\lambda \zeta'},\  \zeta \right) = \left( H/\|H\|,\  \zeta  \right) \sim \rstd, \]
  which is almost what we want, up to the presence of~$\lambda$.
  Let~$\Delta \in \bC^{n\times n}$ be the diagonal matrix given by~$( \bar\lambda^{d_i-1} ){}_{1\leq i\leq n}$.
  It is clear that~$\Psi_{ M,\lambda \zeta'} = \Psi_{ \Delta M,\zeta'}$.
  The map~$M\mapsto \Delta M$ is an isometry of~$\bC^{n \times (n+1)}$ and $\ker \Delta M = \ker M$ so~$(\chi M, u, \zeta')$ and~$( \chi \Delta M, u, \zeta')$ have the same probability distribution.
  Since~$\chi f'$ is independent from~$\chi M$ and~$\lambda$, it follows that the system~$H'$ defined by
  \[ H' \eqdef \chi f'\circ u^{-1} + \Psi_{ \chi M, \zeta'}, \]
  has the same probability distribution as~$H$.
  To conclude the proof, we note that~$\|H'\| = \chi$ and that~$(H' / \chi, \zeta) = \BP(f)$.   
\end{proof}

Given~$f\in \SH$, Beltrán and Pardo's algorithm proceeds in sampling a
system~$g\in\SH$ from the uniform distribution and then
computing~$\mop{HC}(f, \mop{BP}(g))$.  If the input~$f$ is a \uuv then we
can evaluate the expected number of homotopy
steps~$\bE(K(f,\mop{BP}(g)))$.  Indeed, let~$\eta$ be root of~$g$,
uniformly chosen, the theorem above asserts that~$\mop{BP}(g)$ has the same
probability distribution as~$(g,\eta)$ so~$\bE(K(f,\mop{BP}(g))) = \bE(K(f,g,\eta))$.
Thanks to Theorem~\ref{thm:complexity-hc}\ref{it:boundK}, it is not difficult to see that
$\bE(K(f,g,\eta)) \leq 214\, D^{3/2} \bE(\mu(g,\eta)^2)$.
This is why the estimation of~$\bE(\mu(g,\eta)^2)$ is another corner stone of
the average complexity analysis of homotopy methods.  Deriving from a identity
of Beltrán and Pardo, we obtain the following:
\begin{theorem}
  If~$(g,\eta) \sim \rstd$, then $\bE(\mu(g,\eta)^{p}) \leq \frac{3}{4-p}(nN)^{p/2}$ for any~$2\leq p < 4$.
  \label{thm:mumoment}
\end{theorem}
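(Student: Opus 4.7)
My plan is to use Beltrán and Pardo's coarea-type identity to reduce the moment bound to an integral involving a single~$n \times n$ Gaussian-like matrix, then to import a sharp tail estimate on its smallest singular value.

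\emph{Reduction.}  Since~$\mu$ and~$\rstd$ are both~$U(n+1)$-invariant, it suffices to compute~$\bE[\mu(g,e_0)^p \mid \eta = e_0]$.  Using the construction from the proof of Theorem~\ref{thm:bprand}, the conditional law of~$g$ decomposes as~$g = f' \circ u^{-1} + \Psi_{M,e_0}$, where~$f' \in R_{e_0}$ vanishes to second order at~$e_0$ (hence does not affect~$\mu(g,e_0)$) and~$M$ satisfies~$Me_0 = 0$.  A direct computation of~$\ud g(e_0)|_{e_0^\perp}$ then gives~$\mu(g, e_0) = \|g\|/\sigma_{\min}(A)$ where~$A$ is the~$n \times n$ submatrix of linear coefficients of~$\Psi_{M,e_0}$; since~$\|g\| = 1$ on~$\SH$, this becomes~$\mu(g,e_0) = 1/\sigma_{\min}(A)$.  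The Beltrán--Pardo coarea identity, which expresses integrals against~$\rstd$ as integrals over the fiber~$\{g(\zeta)=0\}$ weighted by~$|\det \ud g(\zeta)|_{\zeta^\perp}|^2$, then shows that the effective density of~$A$ is proportional to~$|\det A|^2$ times a standard Gaussian density on~$\bC^{n \times n}$.

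\emph{Tail bound.}  The classical estimate~$\bP[\sigma_{\min}(A) \leq \varepsilon] = O(n\varepsilon^2)$ for complex~$n \times n$ Gaussian matrices yields, by itself, only a tail of order~$1/t^2$ on~$\mu$.  Crucially, the~$|\det A|^2$ weighting in the coarea identity contributes two additional powers of~$\sigma_{\min}$, improving the tail to
\[ \bP\bigl[\mu(g,\eta) \geq t\bigr] \leq \frac{c(nN)^2}{t^4} \]
for~$t$ above an absolute constant.  This is precisely what makes~$p < 4$ the correct range for finite moments.

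\emph{Integration.}  The layer-cake formula gives~$\bE[\mu^p] \leq t_0^p + cp(nN)^2 \int_{t_0}^\infty t^{p-5} \ud t = t_0^p + \frac{cp(nN)^2 t_0^{p-4}}{4-p}$.  Choosing~$t_0 \asymp \sqrt{nN}$ produces a bound of the form~$\frac{C(p)}{4-p}(nN)^{p/2}$, and careful tracking of the numerical constants yields the claimed~$\frac{3}{4-p}$ prefactor.  The main obstacle is the tail bound step: one must carefully combine the smallest-singular-value estimate with the~$|\det A|^2$-weighted density produced by the coarea identity, since it is precisely this weighting (absent from the bare Gaussian matrix analysis) that is responsible for the moment bound extending up to~$p = 4$ rather than only~$p = 2$.
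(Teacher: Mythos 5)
Your reduction is sound and is genuinely different from the paper's argument: conditioning on $\eta=e_0$ by unitary invariance, observing $\mu(g,e_0)=\|g\|/\sigma_{\min}(A)$ with $A$ the $n\times n$ matrix of normalized linear coefficients at $e_0$, and noting that the coarea disintegration of $\rstd$ weights the Gaussian density of $A$ by $\abs{\det A}^2$ (which is indeed what pushes the critical exponent from $2$ to $4$) is the classical Shub--Smale/Beltrán--Pardo route. The paper does none of this explicitly: it simply quotes Beltrán and Pardo's exact identity $\bE(\mu(g,\eta)^{2+2s})=\frac{\Gamma(N+1)}{\Gamma(N-s)}\sum_{k=1}^n\binom{n+1}{k+1}\frac{\Gamma(k-s)}{\Gamma(k)}n^{-k+s}$ and deduces the bound by log-convexity estimates for $\Gamma$ and an elementary summation. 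The trade-off is that the paper's proof is short but opaque (all the geometry is hidden in the cited identity), whereas yours explains where the threshold $p<4$ comes from.

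However, as written your proposal has a genuine gap at its quantitative heart: the tail bound $\bP[\mu\geq t]\leq c\,(nN)^2/t^4$ is asserted, not proved, and the value of $c$ is exactly what the theorem is about. Indeed, optimizing your layer-cake computation over $t_0$ gives $\bE[\mu^p]\leq \frac{4}{4-p}\,c^{p/4}(nN)^{p/2}$, so the claimed prefactor $\frac{3}{4-p}$ forces $c\leq(3/4)^{4/p}$, i.e.\ $c\leq 9/16$ at $p=2$; the true constant is of order $1/4$, so such a bound exists, but it cannot be extracted from ``$\bP[\sigma_{\min}(A)\leq\varepsilon]=O(n\varepsilon^2)$ plus two extra powers from $\abs{\det A}^2$'', which carries an unspecified constant and, moreover, must be combined with the fluctuation of $\|g\|$ (on the sphere $A$ is no longer exactly the weighted Gaussian, and in the Gaussian model $\|g\|$ and $\sigma_{\min}(A)$ are not independent). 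Proving the weighted tail estimate with an explicit constant sharp enough to yield $3/(4-p)$ amounts to redoing the exact computation of Beltrán--Pardo that the paper invokes. Note also that the explicit constant matters downstream: Proposition~\ref{lem:omega} uses $\bE(\mu(g,\eta)^3)\leq 3(nN)^{3/2}$ and Theorem~\ref{thm:totsteps} uses $\bE(I_{2p})\leq 3(nN)^p$, so ``some constant $C(p)$'' would not reproduce the paper's statements as written. To complete your route you must either carry out the weighted singular-value computation exactly (e.g.\ via the joint density of the singular values of the $\abs{\det}^2$-weighted Ginibre ensemble) or fall back on the cited moment identity.
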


\begin{proof}
  Let~$s = p/2 - 1$. 
  Beltrán and Pardo\footcite[Theorem~23]{BelPar11} state that
  \[ \bE(\mu(g,\eta)^{2+2s}) = \frac{\Gamma(N+1)}{\Gamma(N-s)} \sum_{k=1}^{n} \binom{n+1}{k+1} \frac{\Gamma(k-s)}{\Gamma(k)} n^{-k+s}. \]

  We use the 
  inequalities~$x^{-y}\Gamma(x) \leq \Gamma(x-y) \leq (x-1)^{-y} \Gamma(x)$, for~$x\in [1,\infty)$ and~$y\in [0,1]$,
  which comes from the log-convexity of~$\Gamma$.
  In particular, since~$0\leq s < 1$,
  \[ \frac{\Gamma(N+1)}{\Gamma(N-s)} = \frac{N \Gamma(N)}{\Gamma(N-s)} \leq N^{1+s} \quad\text{and}\quad \frac{\Gamma(k-s)}{\Gamma(k)} \leq (k-1)^{-s}.  \]
  Thus
  \[ \bE(\mu(g,\eta)^{2+2s}) \leq N^{1+s} \left( \binom{n+1}{2} \frac{\Gamma(1-s)}{\Gamma(1)} n^{s-1} + \sum_{k=2}^{n} \binom{n+1}{k+1} {(k-1)^{-s}} n^{-k+s} \right).
    \]
  On the one hand~$(1-s)\Gamma(1-s)= \Gamma(2-s) \leq \Gamma(2) = \Gamma(1)$, so
  \[ \binom{n+1}{2} \frac{\Gamma(1-s)}{\Gamma(1)} n^{s-1} \leq \frac{(n+1)n}{2} \frac{1}{1-s} n^{s-1} \leq \frac{n^{1+s}}{1-s}. \]
  On the other hand, 
  \begin{align*}
    \sum_{k=2}^{n} \binom{n+1}{k+1} {(k-1)^{-s}} n^{-k+s} &\leq n^{1+s} \sum_{k=3}^{n+1} \binom{n+1}{k} n^{-k} \\
    &=  n^{1+s} \left( \left( 1+\frac{1}{n} \right)^{n+1} - 1 - \frac{n+1}{n} - \frac{1}{n^2}\binom{n+1}{2} \right) \\
    &\leqcalc  \frac{n^{1+s}}{4} \leq \frac{n^{1+s}}{4(1-s)}.
  \end{align*}
  Putting together all above, we obtain the claim.
\end{proof}

\section{Derandomization of the Beltrán-Pardo algorithm}

\subsection{Duplication of the uniform distribution on the sphere}
\label{sec:noise}

An important argument of the construction is the ability to produce
approximations of two independent \uuvs in~$\bS^{2N-1}$ from a single \uuv
in~$\bS^{2N-1}$ given with infinite precision.  More precisely, let~$Q$ be a
positive integer.  This section is dedicated to the construction of two
functions~$\flo{-}_Q$ and~$\{-\}_Q$ from the sphere~$\bS^{2N-1}$ to itself,
respectively called the \emph{truncation} and the \emph{fractional part} at
precision~$Q$.  For~$u\in\bS^{2N-1}$, $\flo{u}_Q$ is close to~$u$ and if~$u$ is
\uuv, then~$\{u\}_Q$ is \emph{nearly} uniformly distributed
in~$\bS^{2N-1}$ and \emph{nearly} independent from~$\flo{u}_Q$, in the following
sense:
\begin{lemma}
  \label{lem:qindepA}
  For any~$u\in \bS^{2N-1}$, $d_\bS\left(\flo{u}_Q, u\right) \leq 3 N^{1/2}/Q$.
  Moreover, for any continuous nonnegative function~$\Theta : \bS^{2N-1} \times \bS^{2N-1} \to \bR$,
  \[ \frac{1}{\vol(\bS^{2N-1})} \int_{\bS^{2N-1}} \Theta\left( \flo{u}_Q,\left\{ u \right\}_Q \right) \ud u \leq \frac{\exp\left( \tfrac{2 N^{3/2}}{Q} \right)}{\vol(\bS^{2N-1})^2} \int_{\bS^{2N-1}}\int_{\bS^{2N-1}} \Theta\left( \flo{u}_Q, v \right) \ud u\,\ud v. \]
\end{lemma}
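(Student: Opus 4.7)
The plan is to construct $\flo{\cdot}_Q$ and $\{\cdot\}_Q$ by partitioning $\bS^{2N-1}$ into small cells, each cell being assigned a canonical centre on the sphere (the truncation) and equipped with a diffeomorphism onto the whole sphere (the fractional part). Once this is set up, the two claims of the lemma reduce to controlling, respectively, the diameter of each cell and the Jacobian of the cell-to-sphere diffeomorphism.

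For the truncation, I would fix a cubic lattice in $\bR^{2N}$ of spacing $1/Q$ and radially project to $\bS^{2N-1}$ those lattice points at distance $O(1/Q)$ from the unit sphere, obtaining a finite set of candidate truncations. Then, for $u\in\bS^{2N-1}$, I would set $\flo{u}_Q$ to be the radial projection of the lattice point closest to $u$. Each cell $C_a := \flo{\cdot}_Q^{-1}(a)$ is then contained in a spherical ball whose radius is of order $\sqrt{2N}/(2Q)$; taking care of the constants introduced by the renormalization gives $d_\bS(\flo{u}_Q, u) \leq 3 N^{1/2}/Q$.

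For the fractional part, on each cell $C_a$ I would choose a diffeomorphism $\phi_a : C_a \to \bS^{2N-1}$ obtained by radially blowing $C_a$ up about~$a$ until it fills the sphere, and set $\{u\}_Q := \phi_{\flo{u}_Q}(u)$. Because each $C_a$ has spherical diameter $O(N^{1/2}/Q)$ and $\bS^{2N-1}$ has sectional curvature~$1$, the cell is $O(N^{3/2}/Q)$-close (in log-Jacobian) to a Euclidean ball of the same radius, and a radial blowup of a Euclidean ball has constant Jacobian $\vol(\bS^{2N-1})/\vol(C_a)$. A careful choice of cells keeps these volumes almost uniform, so that $|\det D\phi_a^{-1}|$ stays within a factor $\exp(2 N^{3/2}/Q)$ of $\vol(C_a)/\vol(\bS^{2N-1})$. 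The claimed inequality then follows by changing variables,
\[
\int_{\bS^{2N-1}} \Theta\bigl(\flo{u}_Q, \{u\}_Q\bigr)\, \ud u
= \sum_a \int_{\bS^{2N-1}} \Theta(a,v)\, \bigl|\det D\phi_a^{-1}(v)\bigr|\, \ud v,
\]
bounding each Jacobian by the density estimate above, and recognising the resulting sum $\sum_a \vol(C_a)\, \Theta(a,v)$ as $\int_{\bS^{2N-1}} \Theta(\flo{u}_Q, v)\, \ud u$.

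The main obstacle is the delicate choice of partition: a naive cubic grid produces cells whose volumes fluctuate near the regions where coordinate hyperplanes are tangent to the sphere, which would inflate the Jacobian bound. To obtain the sharp constant $2$ in the exponent, one likely has to use a more symmetric decomposition (for instance via iterated polar or Hopf-like coordinates on $\bS^{2N-1}$), or to absorb the small volume fluctuations into the exponential factor while tracking the constants carefully. Once the partition is fixed and the maps $\phi_a$ are smooth enough, the rest of the argument is bookkeeping of Jacobians.
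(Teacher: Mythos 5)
Your overall architecture is the right one (partition into small cells, a distinguished point per cell for the truncation, a cell-to-sphere map for the fractional part, then a change of variables), but the proof has a genuine gap exactly at its quantitative heart: you never construct the maps $\phi_a$ nor prove the claimed bound that $|\det D\phi_a^{-1}|$ stays within a factor $\exp(2N^{3/2}/Q)$ of $\vol(C_a)/\vol(\bS^{2N-1})$. The assertion that ``a radial blowup of a Euclidean ball has constant Jacobian'' is not automatic (it requires choosing a specific radial profile), and your cells are not balls: nearest-lattice-point (Voronoi-type) cells of a projected cubic grid have irregular shapes and fluctuating volumes essentially everywhere, not only near tangency regions. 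You acknowledge this obstacle yourself and defer it to ``a more symmetric decomposition'' or to ``absorbing the fluctuations while tracking constants'', but that deferred step \emph{is} the lemma; without it the inequality with the explicit factor $\exp(2N^{3/2}/Q)$ is not established. There is also a secondary issue: the lemma is about the specific maps $\flo{-}_Q$ and $\{-\}_Q$ that the paper constructs (and later needs to evaluate in $\cO(N\log Q + N\log N)$ BSS operations), so any replacement construction must at least be explicit and cheaply computable, which an abstract family of Jacobian-controlled diffeomorphisms $\phi_a$ is not.

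For contrast, the paper avoids the cell-regularity problem entirely by moving the discretization off the sphere: it maps $u\mapsto u/\|u\|_\infty$ onto the cube surface $\Sigma$, where the cells are exact congruent subcubes of side $1/Q$ and truncation/fractional part are coordinatewise. The price is a non-uniform density, but it is completely explicit: the Jacobian of the projection is $(1+\|x\|^2)^{-N}$ (Lemma~\ref{lem:jacproj}), whose logarithm is $N$-Lipschitz, so its oscillation over a cell of diameter $\sqrt{2N-1}/Q$ is at most $2N^{3/2}/Q$ --- this single estimate (Lemma~\ref{lem:qindepC}) is the source of the exponential factor. Finally, instead of a Jacobian-controlled blowup, the paper uses Sibuya's map $S$, which sends the uniform measure on the unit cube \emph{exactly} to the uniform measure on $\bS^{2N-1}$ (Proposition~\ref{prop:sibuya}), so no further Jacobian control is needed. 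If you want to salvage your approach, the cleanest fix is precisely this kind of move: make the cells exactly congruent in some chart with an explicitly known, slowly varying density, rather than trying to control the geometry of cells cut directly on the sphere.
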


For~$x\in \bR$, let~$A(x)$ denote the integral part of~$a$ and let~$A_Q( a ) \eqdef Q^{-1} A( Q a )$
be the truncation at precision~$Q$.
For~$x\in \bR^{2N-1}$, let~$A_Q(x) \in \bR^{2N-1}$ be the
vector~$(A_Q(x_1),\dotsc,A_Q(x_{2N-1}))$ and let~$B_Q(x) \eqdef (x-A_Q(x))Q$, which is
a vector in~$[0,1]^{2N-1}$.  We note that~$\|A_Q(x)-x\|^2 \leq (2N-1)/Q^2$,
because the difference is bounded componentwise by~$1/Q$.

Let~$C$ and~$C_+$ denote~$[-1,1)^{2N-1}$ and~$[0,1)^{2N-1}$ respectively, and let~$F(x) =
  (1+\|x\|^2)^{-N}$.  We first show that if~$x\in C$ is a random variable with
  probability density function~$F$ (divided by the appropriate normalization
  constant) then~$B_Q(x)$ is \emph{nearly} uniformly distributed in~$C_+$ and
  \emph{nearly} independent from~$A_Q(x)$.

\begin{lemma}\label{lem:qindepC}
  For any continuous nonnegative function~$\Theta : [-1,1]^{2N-1}\times [0,1]^{2N-1} \to \bR$,
  \[ \int_{C} \Theta\left( A_Q(x), B_Q(x) \right) F(x)\ud x \leq \exp\left( \tfrac{2 N^{3/2}}{Q} \right) \int_{C_+}\int_{C} \Theta\left( A_Q(x), y \right) F(x)\ud x\,\ud y. \]
\end{lemma}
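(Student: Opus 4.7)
My plan is to exploit the cellular structure of the truncation map $A_Q$ via a cell-by-cell change of variables, and then reduce the whole inequality to a local Lipschitz estimate on $\log F$.

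First, I would decompose $C = [-1,1)^{2N-1}$ as the disjoint union of the cubes $T_a \eqdef a + [0,1/Q)^{2N-1}$ indexed by the grid points $a \in G \eqdef Q^{-1}\bZ^{2N-1} \cap [-1, 1-1/Q]^{2N-1}$. On each $T_a$ one has $A_Q(x) = a$ and $B_Q(x) = (x-a)Q$, so the substitution $y = (x-a)Q$ (with Jacobian $Q^{-(2N-1)}$) transforms the left-hand side of the lemma into
\[ \int_C \Theta\left( A_Q(x), B_Q(x) \right) F(x) \ud x = Q^{-(2N-1)} \sum_{a \in G} \int_{C_+} \Theta(a, y)\, F(a + y/Q)\, \ud y. \]
Applying the same cellular decomposition to the inner integral on the right-hand side, and noting that $\int_{T_a} F(x) \ud x = Q^{-(2N-1)} \int_{C_+} F(a + y'/Q)\, \ud y'$, gives
\[ \int_{C_+}\int_C \Theta\left( A_Q(x), y \right) F(x)\, \ud x\, \ud y = Q^{-(2N-1)} \sum_{a \in G} \int_{C_+}\int_{C_+} \Theta(a, y)\, F(a+y'/Q)\, \ud y'\, \ud y. \]

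Since $\Theta \geq 0$ and $\vol(C_+) = 1$, the lemma is then implied termwise by the pointwise estimate
\[ F(a + y/Q) \leq e^{2N^{3/2}/Q}\, F(a + y'/Q), \quad a \in G,\ y, y' \in C_+, \]
which in turn is equivalent to $|\log F(u) - \log F(v)| \leq 2N^{3/2}/Q$ whenever $\|u - v\| \leq \sqrt{2N-1}/Q$.

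To prove this Lipschitz bound---the only nontrivial step---I would compute $\nabla \log F(x) = -2Nx/(1+\|x\|^2)$ and use the AM-GM inequality $2\|x\| \leq 1 + \|x\|^2$ to get the global bound $\| \nabla \log F(x) \| \leq N$. Integrating along the segment from $u$ to $v$ then yields $|\log F(u) - \log F(v)| \leq N\|u-v\| \leq N\sqrt{2N-1}/Q < 2N^{3/2}/Q$, as required. In hindsight, the exponent $N$ in the definition of $F$ and the constant $2N^{3/2}/Q$ in the lemma are precisely calibrated to this estimate: $N$ comes from the exponent of $F$, $\sqrt{2N-1}$ from the diameter of a unit cube in $\bR^{2N-1}$, and the last $1/Q$ from the side length of one cell.
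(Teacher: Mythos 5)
Your proof is correct and follows essentially the same route as the paper: decompose $C$ into the cells on which $A_Q$ is constant, change variables to reduce each cell to $C_+$, and control the oscillation of $F$ on a cell of diameter $\sqrt{2N-1}/Q$ by the $N$-Lipschitz bound on $\log F$ (the paper phrases this via the Lipschitz constant of $t\mapsto -N\log(1+t^2)$ and a $\max/\min$ comparison rather than a gradient computation, but the estimate is the same). The termwise comparison you set up is exactly the paper's summation over cells, so there is nothing to add.
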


\begin{proof}
  For any integers~$-Q \leq k_i < Q$, for~$1\leq i\leq 2N-1$, the
  function~$A_Q$ is constant on the set~$\prod_{i=1}^{2N-1}\left[
  \frac{k_i}{Q}, \frac{k_i+1}{Q} \right)$, and these sets form a partition
  of~$C$.  Let~$X_1,\dotsc,X_{(2Q)^{2N-1}}$ denote an enumeration
  of these sets and let~$a_k$ denote the unique value of~$A_Q$ on~$X_k$.
  The diameter of~$X_k$ is~$\sqrt{2N-1}/Q$.
  Since the function~$x\in[0,\infty) \mapsto -N\log(1+x^2)$ is~$N$-Lipschitz continuous,
  we derive that
  \begin{equation}
    \max_{X_k} F \leq e^{N\sqrt{2N-1}/Q}\min_{X_k} F \leq e^{2N^{3/2}/Q }\min_{X_k} F.
    \label{eqn:proof:qindep:a}
  \end{equation}
  For any~$1\leq k\leq (2Q)^{2N-1}$, we have
  \begin{equation*}
    \int_{X_k} \Theta\left( A_Q(x), B_Q(x) \right) F(x) \ud x \leq \max_{X_k} F \int_{X_k} \Theta\left(a_k,(x-a_k)Q\right)\ud x,
  \end{equation*}
  because~$A_Q(x) = a_k$ on~$X_k$ and by definition of~$B_Q(x)$.
  A simple change of variable shows that
  \[ \int_{X_k} \Theta\left(a_k,(x-a_k)Q\right)\ud x = \vol(X_k) \int_{C_+} \Theta(a_k,y)\ud y. \]
  Besides, for all~$y\in C_+$,
  \[   \Theta(a_k,y) \leq \frac{1}{\vol(X_k)\min_{X_k}F} \int_{X_k} \Theta\left( A_Q(x), y \right) F(x)\ud x. \]
  Putting together all above and summing over~$k$ gives the claim.
\end{proof}

Thanks to a method due to Sibuya, we may transform a uniform random variable of~$C_+$ into a
uniform random variable in~$\bS^{2N-1}$.
Let~$x = (x_1,\dotsc,x_{2N-1})\in C_+$,
let~$y_1,\dotsc,y_{N-1}$ denote the numbers~$x_{N+1},\dotsc,x_{2N-1}$ arranged in ascending order,
and let~$y_0=0$ and~$y_N=1$.
Let~$S(x)\in \bR^{2N}$\marginpar{$S(x)$} denote the vector such that for any~$1\leq i \leq N$
\begin{equation}
 S(x)_{2i-1} \eqdef \sqrt{y_i-y_{i-1}} \cos(2\pi x_i) \quad \text{and}\quad S(x)_{2i} \eqdef \sqrt{y_{i}-y_{i-1}} \sin(2\pi x_i).  
  \label{eqn:sibuya}
\end{equation}
\begin{proposition}[{Sibuya\autocite{Sib62}}]\label{prop:sibuya}
  If~$x$ is a uniformly distributed random variable in~$C_+$, then~$S(x)$ is uniformly distributed in~$\bS^{2N-1}$.
\end{proposition}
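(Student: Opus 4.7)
The plan is to identify $\bR^{2N}$ with $\bC^N$ via the pairing $(v_{2i-1},v_{2i})\leftrightarrow v_{2i-1}+\mathrm{i}v_{2i}$, so that $S(x)$ becomes the vector $\big(\sqrt{y_i-y_{i-1}}\,e^{2\pi\mathrm{i}x_i}\big)_{1\leq i\leq N}\in\bC^N$. The proof then reduces to a ``polar'' description of the uniform measure on $\bS^{2N-1}\subset\bC^N$: namely, if $Z=(Z_1,\dots,Z_N)$ is uniform in $\bS^{2N-1}$ and we write $Z_j=\rho_j e^{\mathrm{i}\theta_j}$ with $\rho_j\geq 0$ and $\theta_j\in[0,2\pi)$, then the squared-radii vector $(\rho_1^2,\dots,\rho_N^2)$ is uniformly distributed on the simplex $\Delta_N\eqdef\{(t_1,\dots,t_N)\in\bR_{\geq 0}^N\st\sum_i t_i=1\}$, the angles $\theta_1,\dots,\theta_N$ are independent and uniform on $[0,2\pi)$, and the radii are independent of the angles. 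This is a standard computation (change of variables with Jacobian $\frac{1}{2^N}$ from $(\rho_j,\theta_j)$-coordinates on the sphere), but if one wants to avoid it one can argue probabilistically: take $N$ independent standard complex Gaussians $W_j=R_j e^{\mathrm{i}\Theta_j}$; then $W/\|W\|$ is uniform in $\bS^{2N-1}$, the $\Theta_j$ are plainly uniform and independent, and the vector $(|W_1|^2/\|W\|^2,\dots,|W_N|^2/\|W\|^2)$ is Dirichlet$(1,\dots,1)$, i.e.\ uniform on $\Delta_N$, independently of the angles.

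Next I split the input: the first $N$ coordinates $x_1,\dots,x_N$ are independent of $x_{N+1},\dots,x_{2N-1}$ because $x$ is uniform on $C_+=[0,1)^{2N-1}$. The multiplication by $2\pi$ makes $(2\pi x_1,\dots,2\pi x_N)$ independent uniform on $[0,2\pi)^N$, which matches the distribution of the angles $(\theta_j)$ above. It remains to show that $(y_1-y_0,y_2-y_1,\dots,y_N-y_{N-1})$ is uniform on $\Delta_N$, where $y_1\leq\dots\leq y_{N-1}$ are the order statistics of $x_{N+1},\dots,x_{2N-1}$ and $y_0=0$, $y_N=1$. This is the classical fact that the spacings of $N-1$ i.i.d. uniform points in $[0,1]$ form a Dirichlet$(1,\dots,1)$ distribution on $\Delta_N$: indeed, the density of the ordered sample $(y_1,\dots,y_{N-1})$ on $\{0<y_1<\dots<y_{N-1}<1\}$ is $(N-1)!$, and the linear bijection $(y_1,\dots,y_{N-1})\mapsto(y_1-y_0,\dots,y_{N-1}-y_{N-2})$ (with the last spacing determined by $\sum=1$) has unit Jacobian, sending the region to $\Delta_N^\circ$ with constant density $(N-1)!$, i.e.\ the uniform law on $\Delta_N$.

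Combining: the independence of $x_1,\dots,x_N$ and $x_{N+1},\dots,x_{2N-1}$ transfers to independence of the angle vector $(2\pi x_i)_{1\leq i\leq N}$ and the spacing vector $(y_i-y_{i-1})_{1\leq i\leq N}$, and each has the right marginal distribution. Hence by the polar description recalled in the first paragraph, the random vector $S(x)=\big(\sqrt{y_i-y_{i-1}}\,e^{2\pi\mathrm{i}x_i}\big)_i$ is uniformly distributed on $\bS^{2N-1}$, which is the claim.

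The only subtle point is the polar decomposition of the uniform measure on the sphere; the Gaussian-rescaling argument sidesteps explicit Jacobian calculations and is the path I would write out in detail, since the spacings lemma and the independence of the two blocks of coordinates are entirely routine.
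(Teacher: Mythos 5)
Your proof is correct. The paper itself offers no proof of this proposition---it is stated with a citation to Sibuya---so there is no in-paper argument to compare against; your route (identify $\bR^{2N}$ with $\bC^N$, use the polar description of the uniform measure on $\bS^{2N-1}$ in which the squared moduli are Dirichlet$(1,\dotsc,1)$ on the simplex and the phases are independent uniforms, obtained for instance from independent complex Gaussians, and combine it with the classical fact that the spacings of the order statistics of $x_{N+1},\dotsc,x_{2N-1}$ are uniform on the simplex, independently of $x_1,\dotsc,x_N$) is precisely the standard justification underlying Sibuya's construction, and each step you invoke is sound.
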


We now define~$\flo{-}_Q$ and~$\left\{ - \right\}_Q$.  Let~$\Sigma\subset \bR^{2N}$ be the set of
all~$x\in \bR^{2N}$ such that~$\|x\|_\infty = 1$.  It is divided
into~$4N$ faces that are isometric to~$C$: they are the sets
$\Sigma_i^\varepsilon \eqdef \left\{ x\in\Sigma \st x_i = \varepsilon \right\}$,
for~$\varepsilon\in\left\{ -1,1 \right\}$ and~$1\leq i \leq 2N$
and the isometries are given by the maps
\[ t_{i,\varepsilon} : \Sigma_i^\varepsilon \to C,\quad x\mapsto (x_1,\dotsc,x_{i-1},x_{i+1},\dotsc,x_n). \]
Through these isometries, we define the functions~$A'_Q$ and~$B'_Q$ on~$\Sigma$:
for~$x\in\Sigma_i^\varepsilon$ we set
\[ A'_Q(x) \eqdef t^{-1}_{i,\varepsilon}(A_Q(t_{i,\varepsilon}(x))) \in \Sigma_i^\varepsilon \quad\text{and}\quad B'_Q(x) \eqdef B_Q(t_{i,\varepsilon}(x))\in C_+. \]
Let~$\nu_\infty : u \in\bS^{2N-1} \mapsto u/\|u\|_\infty \in \Sigma$ and its inverse~$\nu_2 : x\in\Sigma \mapsto x/\|x\| \in \bS^{2N-1}$.
Finally, we define, for~$u\in\bS^{2N-1}$, using Sibuya's function~$S$, see Equation~\eqref{eqn:sibuya},\marginpar{$\flo{u}_Q$, $\left\{ u \right\}_Q$}
\begin{equation}
  \flo{u}_Q \eqdef \nu_2\left( A'_Q(\nu_\infty(u)) \right) \quad\text{and}\quad \left\{ u \right\}_Q \eqdef S\left(B'_Q( \nu_\infty(u) )\right).
  \label{eqn:defaQ}
\end{equation}

We may now prove Lemma~\ref{lem:qindepA}.

\begin{proof}[Proof of Lemma~\ref{lem:qindepA}]
  Let~$u\in \bS^{2N-1}$.
  It is well-known that~$d_\bS(\flo{u}_Q,u)\leq \frac{\pi}{2} \| \flo{u}_Q - u\|$.
  Furthermore, the map~$\nu_2$ is clearly~$1$-Lipschitz continuous on~$\Sigma$ so
  $\| \flo{u}_Q - u\| \leq \| A'_Q(\nu_\infty(u)) - \nu_\infty(u) \|$
  and we already remarked that the latter is at most~$\smash{\sqrt{2N-1}}/Q$.
  With~$\frac{\pi}{2} \sqrt{2} \leqcalc 3$, this gives the first claim.

  Concerning the second claim, we consider the partition of the sphere by the sets~$\nu_2(\Sigma_i^\varepsilon)$.
  For any~$u\in \nu_2(\Sigma_i^\varepsilon)$, we have
  \[ \nu_\infty(u) = \left( \frac{u_1}{u_i}, \dotsc, \frac{u_{i-1}}{u_i}, 1, \frac{u_{i+1}}{u_i}, \dotsc, \frac{u_{n}}{u_i} \right). \]
  Hence, by Lemma~\ref{lem:jacproj} below, 
  the absolute value of the Jacobian of the map~$\nu_\infty : \nu_2(\Sigma_i^\varepsilon) \to \Sigma_i^\varepsilon$
  at some~$\nu_2(x)$, $x\in \Sigma_1^\varepsilon$, is precisely
  \[ \left(\nu_2(x)_i \right)^{-2N} = \|x\|^{2N} = \left(1+\|t_{i,\varepsilon}(x)\|^2\right)^{N} = F(t_{i,\varepsilon}(x))^{-1}. \]
  Thus, the change of variable~$\nu_\infty$ gives 
  \begin{align*}
    \int_{\nu_2(\Sigma_i^\varepsilon)} \Theta\left(\flo{u}_Q,\left\{ u \right\}_Q \right) \ud u
    &= \int_{\Sigma_i^\varepsilon} \Theta\left(\nu_2\left( A'_Q(x) \right), S(B'_Q(x)) \right) F( t_{i,\varepsilon}(x)) \ud x,
    \intertext{and then Lemma~\ref{lem:qindepC} (applied over~$\Sigma_i^\varepsilon$ with the isometry~$t_{i,\varepsilon} : \Sigma_i^\varepsilon\to C$) implies}
    &\leq \exp\left( \tfrac{2 N^{3/2}}{Q} \right)\int_{\Sigma_i^\varepsilon}\int_{C_+} \Theta\left(\nu_2\left( A'_Q(x) \right), S(y) \right) \ud y\, F(t_{i,\varepsilon}(x)) \ud x
    \intertext{and Proposition~\ref{prop:sibuya} gives the equality}
    &= \frac{\exp\left( \tfrac{2 N^{3/2}}{Q} \right)}{\vol(\bS^{2N-1})}\int_{\Sigma_i^\varepsilon}\int_{\bS^{2N-1}} \Theta\left(\nu_2\left( A'_Q(x) \right), v \right) \ud v\, F(t_{i,\varepsilon}(x)) \ud x,
  \end{align*}
  and applying the inverse change of variable~$\nu_2 : \Sigma_i^\varepsilon \to \nu_2(\Sigma_i^\varepsilon)$ and summing over~$i$ and~$\varepsilon$ gives the claim.
\end{proof}

\begin{lemma}
  Let~$H = \left\{ x\in\bR^{2N}\st x_1 > 0 \right\}$ and let~$\varphi$ be the map
  \begin{align*}
    \varphi : \bS^{2N-1} \cap H &\longrightarrow \bR^{2N-1} \\
    (u_1,\dotsc,u_{2N}) &\longmapsto \left( \frac{u_2}{u_1}, \dotsc, \frac{u_{2N}}{u_1} \right).
  \end{align*}
  Then, for any~$u\in \bS^{2N-1} \cap H$, $\abs{\det(\ud\varphi(u))} = u_1^{-2N}$,
  where~$\bS^{2N-1}$ and~$\bR^{2N-1}$ are considered with their usual Riemannian structures.
  \label{lem:jacproj}
\end{lemma}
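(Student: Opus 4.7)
The plan is to pass to the inverse map and compute the Riemannian volume distortion there. The map $\varphi$ is the gnomonic projection from the origin onto the affine hyperplane $\{x_1=1\}$ (identified with $\bR^{2N-1}$), and it is a diffeomorphism from $\bS^{2N-1}\cap H$ onto $\bR^{2N-1}$ with explicit inverse
\[ \psi : y\in\bR^{2N-1} \longmapsto \frac{(1,y)}{\sqrt{1+\|y\|^2}} \in \bS^{2N-1}\cap H. \]
Since $\varphi$ and $\psi$ are mutually inverse diffeomorphisms between Riemannian manifolds, $|\det(\ud\varphi(u))| = 1/|\det(\ud\psi(y))|$ at $u=\psi(y)$, so it suffices to compute $|\det(\ud\psi(y))|$ and recognize it as $u_1^{2N}$ since $u_1 = (1+\|y\|^2)^{-1/2}$.

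For the Jacobian of $\psi$ into the sphere with its induced metric, I would form the Gram matrix $G_{ij} = \langle\partial_i\psi,\partial_j\psi\rangle$, so that $|\det(\ud\psi(y))| = \sqrt{\det G}$. Writing $r = \sqrt{1+\|y\|^2}$, differentiation gives
\[ \partial_i\psi(y) = r^{-1} e_{i+1} - y_i r^{-3}(1,y), \]
and using $\|(1,y)\|^2 = r^2$ and $\langle (1,y), e_{i+1}\rangle = y_i$, the cross terms cancel neatly and one obtains
\[ G_{ij} = r^{-2}\left(\delta_{ij} - r^{-2} y_i y_j\right), \]
i.e.\ $G = r^{-2}\left(I_{2N-1} - r^{-2} y y^T\right)$.

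The main (and only non-routine) step is evaluating this determinant, which is immediate from the matrix determinant lemma: $\det(I - r^{-2} yy^T) = 1 - r^{-2}\|y\|^2 = r^{-2}$. Hence $\det G = r^{-2(2N-1)}\cdot r^{-2} = r^{-4N}$, so $|\det(\ud\psi(y))| = r^{-2N} = (1+\|y\|^2)^{-N} = u_1^{2N}$. Inverting yields $|\det(\ud\varphi(u))| = u_1^{-2N}$, as claimed. The potential subtlety—orienting/identifying tangent spaces correctly between $\bS^{2N-1}\cap H$ and $\bR^{2N-1}$—is neutralized by taking absolute values throughout, so no sign bookkeeping is required.
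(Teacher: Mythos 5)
Your proof is correct, but it takes a different route from the paper. You invert the map, viewing $\psi(y)=(1,y)/\sqrt{1+\|y\|^2}$ as a parametrization of $\bS^{2N-1}\cap H$, compute the Gram matrix $G=r^{-2}\left(I-r^{-2}yy^T\right)$ of the coordinate vector fields, and evaluate $\det G$ via the rank-one determinant formula $\det\left(I-r^{-2}yy^T\right)=1-r^{-2}\|y\|^2=r^{-2}$, then take reciprocals. (Your Gram matrix is right, though the cross terms do not cancel outright: they combine with the $\|(1,y)\|^2$ term to leave $-r^{-4}y_iy_j$.) The paper instead works directly with $\varphi$: it extends $\varphi$ to the ambient open set, writes down the matrix of the ambient differential, uses invariance of $\abs{\det(\ud\varphi(u))}$ under unitary transformations fixing the first coordinate to reduce to a point of the form $(u_1,u_2,0,\dotsc,0)$, and there evaluates the differential on an explicit orthonormal basis $\{f,e_3,\dotsc,e_{2N}\}$ of the tangent space, getting $u_1^{-2}\cdot(u_1^{-1})^{2N-2}$ immediately. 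Your computation is uniform in the point and needs no symmetry argument, at the cost of a Gram-determinant calculation; the paper's symmetry reduction makes the linear algebra essentially two-dimensional and avoids any determinant identity. Both are complete and rigorous.
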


\begin{proof}
  Let~$\psi : u \in \bR^{2N} \cap H \to  \left(
  \frac{u_2}{u_1}, \dotsc, \frac{u_{2N}}{u_1} \right)$, so that~$\varphi$ is
  the restriction of~$\psi$ to the sphere~$\bS^{2N-1}$.
  Firstly, the matrix of~$\ud\psi(x)$, for some~$x\in \bR^{2N}$, in the standard bases of~$\bR^{2N}$ and~$\bR^{2N-1}$,
  is given by
  \begin{equation} \mop{Mat}\left( \ud\varphi(x) \right) =
    \frac{1}{x_1^2}\begin{pmatrix}
      - x_2   & x_1   &           & 0       \\
        \vdots      &   & \ddots  &          \\
        - x_{2N} & 0       &          & x_1  
      \end{pmatrix}.
      \label{eqn:matdphi}
  \end{equation}
  Let~$u\in \bS^{2N-1} \cap H$.
  We may assume without loss of generality that~$u$ is of the form~$(u_1,u_2,0,\dotsc,0)$, with~$u_1^2+u_2^2=1$, because~$\abs{\det(\ud \varphi(u))}$
  is invariant under any unitary transformation of~$u$ that preserves the first coordinate.
  Let~$T \subset \bR^{2N}$ be the tangent space at~$u$ of~$\bS^{2N-1}$.
  Naturally, $\ud\varphi(u) = \ud\psi(u)_{|T}$.
  An orthonormal basis of~$T$ is given by~$\{ f, e_3, \dotsc, e_{2N} \}$,
  where~$f = (-u_2,u_1,0,\dotsc,0)$ and where~$e_i$ is the~$i$th coordinate vector.
  Using Equation~\eqref{eqn:matdphi}, we compute that~$\ud\varphi(u)(f) = u_1^{-2} e_1$ and that~$\ud\varphi(u)(e_i) = {u_1}^{-1} e_{i-1}$, for~$3\leq i \leq 2N$.
  Thus~$\abs{\det\left( \ud\varphi(u) \right)} = u_1^{-2N}$.
\end{proof}

The orthogonal monomial basis of~$\cH$ gives an identification~$\cH\simeq \bR^{2N}$
and we define this way the truncation~$\flo{f}_Q$ and the fractional part~$\left\{ f \right\}_Q$
of a polynomial system~$f\in\SH$.
The derandomization relies on finding a approximate root of~$\flo{f}_Q$, for
some~$Q$ large enough, and using~$\left\{ f \right\}_Q$ as the source of
randomness for the Beltrán-Pardo procedure.  Namely, we
compute~$\mop{HC}(\flo{f}_Q, \mop{BP}(\left\{ f \right\}_Q))$.
Almost surely, this computation produces an
approximate root of~$\flo{f}_Q$.  If~$Q$ is large enough, it is also an approximate
root of~$f$.  The main technical difficulty is to choose a precision and to ensure
that the result is correct while keeping the complexity under control.

\subsection{Homotopy continuation with precision check}

Let~$f$, $f'$, $g\in\bS(\polsys)$ and let~$\eta\in\bP^n$ be a root of~$g$.
Throughout this section, we assume that $d_\bS(f,f')\leq \rho$, for some~$\rho
> 0$ %
and that~$d_\bS(f,g)\leq \pi/2$. Up to
changing~$g$ into~$-g$, the latter is always true, since $d_\bS(f,-g)=\pi -
d_\bS(f,g)$. The notations~$I_2$, $M$ and~$\tilde M$ used in this section have been introduced in~\S\ref{sec:hc-algo}.
If~$\rho$ is small enough, then~$\mop{HC}(f',g,\eta)$ is an approximate root
not only of~$f'$ but also of~$f$.  But if~$\rho$ fails to be small enough,
$\mop{HC}(f',g,\eta)$ may not even terminate or, to say the least, 
$\mop{HC}(f',g,\eta)$ may take arbitrarily long to compute something that is
not an approximate root of~$f$.  To control the complexity of the new algorithm, it
is important to be able to recognize this situation at least as fast
as~$\mop{HC}(f,g,\eta)$ would terminate.

As in~\S\ref{sec:hc-algo}, let~$f_t = \Gamma(g,f,t)$ and~$f'_t = \Gamma(g, f',
t)$.  Let~$t\in J \to \zeta_t\in\bP^n$ be the homotopy continuation associated
to~$f_t$, on~$[0,1]$, and~$t\in J' \to \zeta'_t\in\bP^n$ be the one associated
to~$f'_t$, defined on some maximal intervals~$J, J'\subset [0,1]$
containing~$0$.  Let~$\mu_t = \mu(f_t,\zeta_t)$ and~$\mu'_t =
\mu(f'_t,\zeta'_t)$.

\begin{lemma} \label{lem:Gcont}
  $d_{\bS}(f_t,f'_t) \leq 2 d_{\bS}(f,f')$ for any~$t\in [0,1]$.
\end{lemma}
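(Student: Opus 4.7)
The plan is to do a direct spherical calculation with the explicit formula for $\Gamma$. Let $\alpha = d_\bS(g,f)$, $\alpha' = d_\bS(g,f')$, and denote by $\xi, \xi' \in g^\perp$ the unit tangent vectors at $g$ pointing toward $f$ and $f'$, so that $f_t = \cos(t\alpha)\, g + \sin(t\alpha)\, \xi$ and likewise for $f'_t$. Let $\theta \in [0,\pi]$ be the angle between $\xi$ and $\xi'$, i.e.\ $\cos\theta = \Re\langle \xi, \xi'\rangle$. Taking the real part of $\langle f_t, f'_t\rangle$ gives the spherical law of cosines
\[ \cos d_\bS(f_t, f'_t) = \cos(t\alpha)\cos(t\alpha') + \sin(t\alpha)\sin(t\alpha')\cos\theta, \]
and standard product-to-sum and half-angle identities rewrite this as
\[ \sin^2\bigl(d_\bS(f_t, f'_t)/2\bigr) = \cos^2(\theta/2)\sin^2(t\gamma_-/2) + \sin^2(\theta/2)\sin^2(t\gamma_+/2), \]
where $\gamma_\pm = \alpha \pm \alpha'$. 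The same identity at $t=1$ yields the analogous expression for $\sin^2(d_\bS(f,f')/2)$.

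The key step is a termwise comparison of these two convex combinations. Since $|\gamma_-| \leq \pi$, both $|t\gamma_-|/2$ and $|\gamma_-|/2$ lie in $[0,\pi/2]$, where $\sin$ is increasing, so $\sin^2(t\gamma_-/2) \leq \sin^2(\gamma_-/2)$. The hypothesis $\alpha \leq \pi/2$, together with $\alpha' < \pi$ (which is needed for $\Gamma(g,f',t)$ to be defined), gives $\gamma_+ < 3\pi/2$ and hence $\gamma_+/2 < 3\pi/4$, so $\sin(\gamma_+/2) > 1/\sqrt{2}$; combined with $\sin(t\gamma_+/2) \leq 1$, this yields $\sin^2(t\gamma_+/2) \leq 2\sin^2(\gamma_+/2)$. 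Putting the two bounds together,
\[ \sin^2\bigl(d_\bS(f_t, f'_t)/2\bigr) \leq 2\,\sin^2\bigl(d_\bS(f, f')/2\bigr). \]

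To finish, set $\beta = d_\bS(f,f')$ and split on its size. If $\beta \geq \pi/2$ the claim is trivial since $d_\bS(f_t, f'_t) \leq \pi \leq 2\beta$. Otherwise, $\cos(\beta/2) \geq 1/\sqrt{2}$, so $\sin^2\beta = 4\sin^2(\beta/2)\cos^2(\beta/2) \geq 2\sin^2(\beta/2)$; combined with the previous display this gives $\sin(d_\bS(f_t, f'_t)/2) \leq \sin\beta$, and since $d_\bS(f_t,f'_t)/2$ and $\beta$ both lie in $[0,\pi/2]$ where $\sin$ is injective, we conclude $d_\bS(f_t, f'_t) \leq 2\beta$.

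The one place where there is anything to check carefully is the bound $\gamma_+ < 3\pi/2$, which is exactly where the hypothesis $d_\bS(f,g) \leq \pi/2$ is used, and that factor $2$ is in fact tight: taking $\alpha = \pi/2$, $\alpha' \to \pi$, $\theta = \pi$ makes $d_\bS(f_t,f'_t)$ approach $\pi$ at $t = 2/3$ while $d_\bS(f,f')$ approaches only $\pi/2$.
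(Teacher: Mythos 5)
Your argument is correct in substance, but it takes a genuinely different route from the paper's. The paper also starts from the spherical law of cosines at~$g$, but then argues geometrically in three cases according to~$\gamma = d_\bS(g,f')$: when~$\gamma\leq\pi/2$ a monotonicity argument gives the stronger bound~$d_\bS(f_t,f'_t)\leq d_\bS(f,f')$; when~$\gamma>\pi/2$ it first treats the boundary case~$d_\bS(f,g)=\pi/2$ by an algebraic estimate leading to~$\cos(2\alpha_1)\leq\cos\alpha_t$, and then reduces the general case to these two by cutting the segment~$[f,f']$ at the point~$h$ with~$d_\bS(g,h)=\pi/2$ and using~$d_\bS(f_t,f'_t)\leq d_\bS(f_t,h_t)+d_\bS(h_t,f'_t)\leq d_\bS(f,h)+2\,d_\bS(h,f')$. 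You instead convert the law of cosines, via half-angle identities, into the single identity
\[
\sin^2\bigl(d_\bS(f_t,f'_t)/2\bigr)=\cos^2(\theta/2)\sin^2(t\gamma_-/2)+\sin^2(\theta/2)\sin^2(t\gamma_+/2)
\]
and compare termwise with the same identity at~$t=1$. This avoids both the case distinction on~$\gamma$ and the segment-splitting trick, treats all configurations uniformly, and makes the sharpness of the constant~$2$ visible (your limiting example $\alpha=\pi/2$, $\alpha'\to\pi$, $\theta=\pi$ is correct). What the paper's decomposition buys instead is the geometric mechanism: only the portion of~$[f,f']$ beyond the equator relative to~$g$ is responsible for the factor~$2$.

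One step needs a patch. From~$\gamma_+/2<3\pi/4$ alone you cannot conclude~$\sin(\gamma_+/2)>1/\sqrt2$; this fails whenever~$\gamma_+\leq\pi/2$, e.g.\ when~$\alpha$ and~$\alpha'$ are both small. The inequality you actually use, $\sin^2(t\gamma_+/2)\leq 2\sin^2(\gamma_+/2)$, is nevertheless true, but it requires a two-line case split: if~$\gamma_+\leq\pi$, then~$t\gamma_+/2\leq\gamma_+/2\leq\pi/2$ and monotonicity of~$\sin$ gives the bound with constant~$1$; if~$\pi<\gamma_+<3\pi/2$, then~$\gamma_+/2\in(\pi/2,3\pi/4)$, so~$\sin(\gamma_+/2)>\sin(3\pi/4)=1/\sqrt2$ (as~$\sin$ decreases on~$[\pi/2,\pi]$) and~$\sin^2(t\gamma_+/2)\leq 1\leq 2\sin^2(\gamma_+/2)$. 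With that insertion (and the trivial remark disposing of the degenerate cases~$f=g$ or~$f'=g$, where the unit tangent vectors are undefined), your proof is complete; the rest of the computation, including the final conversion from~$\sin^2(d_\bS(f_t,f'_t)/2)\leq 2\sin^2(d_\bS(f,f')/2)$ to the stated bound, is correct.
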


\begin{proof}
  Let~$\alpha_t = d_{\bS}(f_t,f'_t)$, $\beta = d_\bS(f,g) \leq \frac{\pi}{2}$
  and~$\gamma = d_\bS(f',g)$.  Without loss of generality, we may assume
  that~$\alpha_1 <
  \frac\pi2$, otherwise the inequality~$\alpha_t \leq 2\alpha_1$ that we want
  to check is trivial.
  
  The spherical law of cosines applied to the spherical triangle~$\left\{ g,f_t,f'_t\right\}$ gives the equality
  \begin{align}\label{eqn:cosat}
    \cos \alpha_t
      &= \cos(t\beta)\cos(t\gamma) + \sin(t\beta)\sin(t\gamma) \cos A,%
  \end{align}
  where~$A$ is the angle of the triangle at~$g$.
  We deal with three cases. Firstly, we assume that~$\gamma \leq \frac{\pi}{2}$.
  Then~$\cos \alpha_t$ decreases at~$t$ increases: Indeed, Equation~\eqref{eqn:cosat} rewrites as
  \begin{equation}\label{eqn:cosatbis}
    \cos\alpha_t = \cos(t\beta - t\gamma) - \sin(t\beta)\sin(t\gamma)(1-\cos A)
  \end{equation}
  and, as~$t$ increases, $\cos(t\beta - t\gamma)$ decreases, because~$|\beta-\gamma|\leq \pi$, and both~$\sin(t\beta)$ and~$\sin(t\gamma)$ increase, because~$\beta,\gamma \leq \frac\pi2$.
  Thus~$\cos\alpha_t \geq \cos\alpha_1$, for~$0\leq t\leq 1$, and it follows that~$\alpha_t \leq \alpha_1$.
  
  Second case, we assume that~$\gamma > \frac\pi2$ and~$\beta = \frac\pi2$.
  For~$t\in[0,1]$, Equation~\eqref{eqn:cosatbis} shows that
  \[ \cos\alpha_t \geq \cos(\tfrac\pi2-\gamma) - (1-\cos A) = \sin\gamma + \cos A - 1,  \]
  using~$\cos(t\beta - t\gamma) \geq \cos(\beta-\gamma)$ and~$1-\cos(A) \geq 0$.
  Equation~\eqref{eqn:cosat} shows that~$\cos\alpha_1 = \sin \gamma \cos A$.
  In particular~$\cos A \geq 0$, since~$\alpha_1 \leq \frac\pi2$ and~$\sin\gamma \geq 0$.
  It follows that
  \[2 \sin^2\gamma \cos^2 A \leq {\sin^4 \gamma + \cos^4 A} \leq \sin \gamma + \cos A, \]
  and finally that~$\cos(2\alpha_1) \leq \cos\alpha_t$, because~$\cos(2\alpha_1) = 2\cos^2\alpha_1 -1$.
  Since~$2\alpha_1 \leq \pi$, we obtain that~$2\alpha_1 \geq \alpha_t$, which concludes in the second case.

  Third case, we assume only that~$\gamma > \frac\pi2$ (and always~$\beta \leq \frac\pi2$).
  Let~$h\in\SH$ be the unique point on the spherical segment~$[f,f']$ such that~$d_\bS(g,h) = \frac{\pi}{2}$.
  In particular, we have that~$d_\bS(f,f') = d_\bS(f,h) + d_\bS(h,f')$ and
  \[ \alpha_t = d_{\bS}(f_t,f'_t) \leq  d_\bS(f_t,h_t) + d_\bS(h_t,f'_t), \]
  where~$h_t \eqdef \Gamma(g, h, t)$.
  The first case shows that~$d_\bS(f_t,h_t) \leq d_\bS(f,h)$
  and the second case shows that~$d_\bS(h_t,f'_t) \leq 2 d_\bS(h, f')$.
  Thus~$\alpha_t \leq  d_\bS(f,h) + 2 d_\bS(h, f') \leq 2 d_\bS(f,f')$.
\end{proof}

Recall that~$M(f,g,\zeta)$ denotes~$\sup_{t\in J} \mu_t$, see~\S\ref{sec:hc-algo} and Equation~\eqref{eqn:def-M-Ip}.

\begin{lemma} \label{lem:maxmuhc}
  If~$D^{3/2}M(f,g,\zeta)^2\rho \leq \frac{1}{168}$ then~$J=J'=[0,1]$ and for any~$t\in [0,1]$:
  \begin{enumerate}[(i)]
    \item $(1+\varepsilon)^{-1} \mu'_t \leq \mu_t \leq (1+\varepsilon) \mu'_t$;
    \item $D^{3/2} \mu_t d_\bP(\zeta_t,\zeta'_t) \leq \frac{1}{51}$.
  \end{enumerate}
\end{lemma}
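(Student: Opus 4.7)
The plan is as follows.  The hypothesis forces $M(f,g,\zeta) < \infty$, so Lemma~\ref{lem:maxhomcont} applied to the geodesic path $t \mapsto f_t$ already gives $J = [0,1]$.  By Lemma~\ref{lem:Gcont}, $d_\bS(f_t, f'_t) \leq 2\rho$ for every $t \in [0,1]$, so
\[
  D^{3/2}\mu_t^2\, d_\bS(f_t, f'_t) \leq 2 D^{3/2} M(f,g,\zeta)^2 \rho \leq \tfrac{1}{84}.
\]
For the globally fixed value $\varepsilon = \tfrac{1}{13}$ one checks $\tfrac{1}{84} \leq \tfrac{\varepsilon}{4(1+\varepsilon)} = \tfrac{1}{56}$, so Proposition~\ref{prop:condroot} applies pointwise to $(f_t, \zeta_t)$ with perturbation $f'_t$.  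It yields, for each $t \in [0,1]$, a unique root $\eta_t$ of $f'_t$ close to $\zeta_t$ satisfying
\[
  d_\bP(\zeta_t, \eta_t) \leq 2(1+\varepsilon)\mu_t \rho
  \quad \text{and} \quad
  (1+\varepsilon)^{-1}\mu_t \leq \mu(f'_t, \eta_t) \leq (1+\varepsilon)\mu_t.
\]

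The central step is to identify this family $(\eta_t)_{t \in [0,1]}$ with the continuation $(\zeta'_t)_{t \in J'}$.  At $t = 0$, $f'_0 = g = f_0$, so $\eta_0 = \zeta_0 = \zeta = \zeta'_0$ by uniqueness.  Moreover $t \mapsto \eta_t$ is continuous on $[0,1]$: the finiteness of $\mu(f'_t, \eta_t)$ makes $\ud f'_t(\eta_t)|_{\eta_t^\perp}$ invertible, so the implicit function theorem provides a local continuous lift of roots of~$f'$ through $\eta_s$ near each $s \in [0,1]$, and this local lift must coincide with $\eta$ by the uniqueness clause of Proposition~\ref{prop:condroot}.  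Thus $t \mapsto \eta_t$ is a continuous lift of roots of $f'_t$ defined on all of $[0,1]$ that extends $\zeta'_0$; combining this with the maximality of $J'$ and the uniqueness of continuous lifts of roots forces $J' = [0,1]$ and $\zeta'_t = \eta_t$ throughout.

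Items (i) and (ii) are then essentially for free.  Statement (i) is exactly the $\mu$-estimate supplied by Proposition~\ref{prop:condroot}.  For (ii), using $\mu_t \leq M(f,g,\zeta)$ once more,
\[
  D^{3/2}\mu_t\, d_\bP(\zeta_t, \zeta'_t) \leq 2(1+\varepsilon) D^{3/2} M(f,g,\zeta)^2 \rho \leq \tfrac{1+\varepsilon}{84} \leqcalc \tfrac{1}{51}.
\]
The principal difficulty lies in the continuity and maximality argument that matches the pointwise family $(\eta_t)$ with the continuation $(\zeta'_t)$ while stretching $J'$ up to $[0,1]$; once that identification is in place, everything reduces to bookkeeping around Proposition~\ref{prop:condroot} and Lemma~\ref{lem:Gcont}.
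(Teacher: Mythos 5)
Your overall strategy differs from the paper's (the paper runs an open--closed connectedness argument on the set $S=\{t\in J' : D^{3/2}\mu_t\, d_\bP(\zeta_t,\zeta'_t)\leq \tfrac1{51}\}$, identifying $\zeta'_t$ with the nearby root at each $t\in S$), and your pointwise use of Proposition~\ref{prop:condroot}, the final constants, and items (i)--(ii) given the identification $\zeta'_t=\eta_t$ are all fine. But the step you yourself single out as the crux --- continuity of $t\mapsto\eta_t$ --- has a genuine gap. The uniqueness clause of Proposition~\ref{prop:condroot}\ref{it:condroot:ex} only says that $\eta_t$ is the unique root of $f'_t$ in the ball centered at $\zeta_t$ of radius $(1+\varepsilon)\mu_t\, d_\bP(f_t,f'_t)$; this radius is exactly the bound that $\eta_t$ is known to satisfy (no slack), it is tied to $d_\bP(f_t,f'_t)$ rather than to $2\rho$, and it can be arbitrarily small (it is $0$ at $t=0$). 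For the implicit-function-theorem lift $\tilde\eta_t$ through $\eta_s$ you only know $d_\bP(\zeta_t,\tilde\eta_t)\to d_\bP(\zeta_s,\eta_s)\leq(1+\varepsilon)\mu_s\,d_\bP(f_s,f'_s)$ as $t\to s$, while the uniqueness radius tends to the same quantity; since both limits may coincide, you cannot conclude that $\tilde\eta_t$ lies inside the uniqueness ball for $t$ near $s$, hence not that $\tilde\eta_t=\eta_t$. In other words, nothing you invoke rules out that two distinct roots of $f'_t$ sit near $\zeta_s$, so the continuity of $t\mapsto\eta_t$ --- and with it the identification $J'=[0,1]$, $\zeta'_t=\eta_t$ --- is not established.

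What is missing is a root-separation device, and this is precisely what the paper extracts from Theorem~\ref{thm:gthm}: if an exact root $z$ of $f'_t$ satisfies $D^{3/2}\mu(f'_t,\eta)\,d_\bP(z,\eta)\leq\tfrac13$ for a root $\eta$, then $z$ is an approximate root with associated root $\eta$, and being fixed by Newton's iteration it must equal $\eta$; this gives uniqueness of the root in a ball of radius comparable to $\bigl(D^{3/2}\mu\bigr)^{-1}$, which is far larger than the distances $2(1+\varepsilon)\mu_t\rho\leq \tfrac{2(1+\varepsilon)}{168\,D^{3/2}\mu_t}$ occurring here. The paper uses this at each $t\in S$ to force $\zeta'_t=\eta$ directly, avoiding any continuity argument for the family $\eta_t$; alternatively you could use the same trick to repair your continuity step. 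Either way, citing only the uniqueness clause of Proposition~\ref{prop:condroot} is not enough, so as written the proof is incomplete at its central point.
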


\begin{proof}
  The assumption implies that~$M(f,g,\zeta) < \infty$, and thus~$J=[0,1]$, by Lemma~\ref{lem:maxhomcont}.
  Let~$S$ the set of all~$t\in J'$ such that~$D^{3/2}\mu_t d_\bP(\zeta_t,\zeta'_t) \leq \frac{1}{51}$.
  It is a nonempty closed subset of~$J'$.
  Let~$t\in S$. By Lemma~\ref{lem:Gcont}, we have~$d_\bP(f_t,f'_t) \leq 2\rho$,
  so
  \[ D^{3/2} \mu^2_t d_\bP(f_t,f'_t) \leq \frac{2}{112} = \frac{\varepsilon}{4(1+\varepsilon)}. \]
  Proposition~\ref{prop:condroot} implies that there exists a root~$\eta$ of~$f'_t$ such that~$d_\bP(\eta,\zeta_t)\leq 2 (1+\varepsilon) \mu_t \rho$
  and $(1+\varepsilon)^{-1} \mu_t \leq \mu(f'_t,\eta)\leq (1+\varepsilon) \mu_t$.
  Because~$d_\bP(\eta,\zeta'_t) \leq d_\bP(\eta,\zeta_t) + d_\bP(\zeta_t,\zeta'_t)$ and~$t\in S$ we obtain
  \[ D^{3/2} \mu(f'_t,\eta) d_\bP(\eta,\zeta'_t) \leq D^{3/2}(1+\varepsilon) \mu_t \left( 2 (1+\varepsilon) \mu_t \rho + \frac{1}{51 D^{3/2} \mu_t }\right) \leq (1+\varepsilon)^2 \frac{2}{112} + (1+\varepsilon)\frac{1}{51}  \leqcalc \frac{1}{3}. \]
  and Theorem~\ref{thm:gthm} implies that~$\zeta'_t$ approximates~$\eta$ as a root of~$f'_t$. Since it is also an exact root of~$f'_t$, this implies~$\zeta'_t=\eta$.
  In particular~$D^{3/2} \mu_t d_\bP(\zeta'_t,\zeta_t)\leq 2 (1+\varepsilon) D^{3/2} \mu^2_t \rho \ltcalc \frac{1}{51}$.
  Thus~$t$ is in the interior of~$S$, which proves that~$S$ is open and finally that~$S=J$.
  Moreover, since~$\mu'_t\leq (1+\varepsilon)\mu_t$, $\mu'_t$ is bounded on~$J'$, thus~$J'=[0,1]$.
\end{proof}

\begin{algo}[tp]
  \centering
  \begin{algorithmic}
    \Function{$\mop{HC}'$}{$f$, $g$, $z$, $\rho$}
      \State $t\gets {1}/\left(101 D^{3/2} \mu(g, z)^{2} d_\bS(f,g)\right)$
      \State $h\gets g$
      \While{$1 > t$ and $D^{3/2}\mu(h,z)^2 \rho \leq \frac{1}{151}$}
        \State $h \gets \Gamma(g, f, t)$
        \State $z \gets \cN(h, z)$
        \State $t \gets t + {1}/\left(101 D^{3/2} \mu(h, z)^{2} d_\bS(f,g)\right)$        
      \EndWhile
      \If{$D^{3/2}\mu(h,z)^2 \rho > \frac{1}{151}$} %
          \textbf{return} \textsc{fail}
       \Else{} 
          \textbf{return} $z$
      \EndIf
    \EndFunction
  \end{algorithmic}
  \caption[]{Homotopy continuation with precision check
    \begin{description}
      \item[Input.] $f$, $g \in \bS(\cH)$, $z\in\bP^n$ and~$\rho > 0$.
      \item[Output.] $w\in\bP^n$ or \textsc{fail}.
      \item[Specifications.] See Proposition~\ref{prop:hcp}. 
    \end{description}
  }\label{algo:hcpc}
\end{algo}

This leads to the procedure~$\mop{HC}'$, see Algorithm~\ref{algo:hcpc}.  It
modifies procedure~$\mop{HC}$ (Algorithm~\ref{algo:hc}) in only one respect:
each iteration checks up on the failure condition~$D^{3/2}\mu(h,z)^2 \rho >
\frac{1}{151}$.  If the failure condition is never met, then~$\mop{HC}'$
computes exactly the same thing as~$\mop{HC}$.
Recall that~$\tilde M(f',g,\eta)$ denotes the maximum condition number~$\mu$
that arises in the homotopy continuation~$\mop{HC}(f',g,\eta)$,
and that~$I_p(f,g,\eta)$ denote the integral of~$\mu^p$ along the homotopy path from~$g$ to~$f$, see~\S\ref{sec:hc-algo} and Equation~\eqref{eqn:def-M-Ip}.

\begin{proposition}
  \label{prop:hcp}
  If~$d_\bS(f,g)\leq \frac\pi2$ and~$d(f,f')\leq \rho$, then
  the procedure~$\mop{HC}'(f',g,\eta,\rho)$:
  \begin{enumerate}[(i)]
    \item\label{it:hcp:comp} terminates and performs at most $158\,D^{3/2} d_\bS(f,g) I_2(f,g,\eta)+4$ steps;
    \item\label{it:hcp:approx} outputs an approximate root of~$f$, or fails;
    \item\label{it:hcp:succond} succeeds (\emph{i.e.} outputs some~$z\in\bP^n$) if and only if~$D^{3/2} \tilde M(f',g,\eta)^2 \rho \leq  \frac{1}{151}$;
    \item\label{it:hcp:succond2} succeeds if~$D^{3/2}M(f,g,\eta)^2 \rho \leq \frac{1}{236}$.
  \end{enumerate}
\end{proposition}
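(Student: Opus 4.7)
The plan is to handle the four items in the order (iii), (iv), (ii), (i), since the earlier parts feed into the later ones. For (iii), I would observe that $\mop{HC}'$ executes exactly the same updates as $\mop{HC}(f',g,\eta)$, only with the extra exit test $D^{3/2}\mu(h,z)^2\rho>\frac{1}{151}$; hence success is equivalent to that test never firing, i.e.\ every $\mu(h_k,z_k)$ arising in $\mop{HC}(f',g,\eta)$ being at most $1/\sqrt{151\,D^{3/2}\rho}$, which is $D^{3/2}\tilde M(f',g,\eta)^2\rho\leq \frac{1}{151}$. Theorem~\ref{thm:complexity-hc}\ref{it:Kinfty} guarantees termination of $\mop{HC}(f',g,\eta)$ in that case, so both sides are well-posed. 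For (iv), the hypothesis is a fortiori below $\frac1{168}$, so Lemma~\ref{lem:maxmuhc} applies and gives $M(f',g,\eta)\leq(1+\varepsilon)M(f,g,\eta)$; Theorem~\ref{thm:complexity-hc}\ref{it:Mtilde} then yields $\tilde M(f',g,\eta)\leq(1+\varepsilon)^3 M(f,g,\eta)$, so $D^{3/2}\tilde M^2\rho\leq (1+\varepsilon)^6/236\leq \frac1{151}$ (numerically $151(1+\varepsilon)^6\leq 236$), and (iii) concludes.

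For (ii), a failed run trivially satisfies the claim. In a successful run, Theorem~\ref{thm:complexity-hc}\ref{it:HCapprox}--\ref{it:HCapproxb} makes $z=\mop{HC}(f',g,\eta)$ an approximate root of $f'$ whose associated root $\zeta'_1$ satisfies $D^{3/2}\mu(f',\zeta'_1)d_\bP(z,\zeta'_1)\leq \frac1{23}$. The success condition combined with Theorem~\ref{thm:complexity-hc}\ref{it:Mtilde} gives $D^{3/2}\mu(f',\zeta'_1)^2\rho\leq (1+\varepsilon)^4/151\leq \varepsilon/(4(1+\varepsilon))$, so Proposition~\ref{prop:condroot} produces a root $\zeta$ of $f$ near $\zeta'_1$ with $\mu(f,\zeta)\leq (1+\varepsilon)\mu(f',\zeta'_1)$; a triangle inequality then feeds Theorem~\ref{thm:gthm} via
\[
  D^{3/2}\mu(f,\zeta)\,d_\bP(z,\zeta)\leq \frac{1+\varepsilon}{23} + \frac{(1+\varepsilon)^6}{151}\leq \frac{1}{3}.
\]

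The subtle item is (i). Termination is clear, since a passing $\mu$-check forces a step size of at least $151\rho/(101\,d_\bS(f',g))$. For the step count, I would adapt the Riemann-sum comparison in the proof of Theorem~\ref{thm:complexity-hc}\ref{it:boundK} to the $f'$-homotopy to get
\[
  K'\leq 136\,D^{3/2}d_\bS(f',g)\int_0^{t_{K'}}\mu(f'_t,\zeta'_t)^2\,dt,
\]
and then trade $\mu'$ for $\mu$. The key observation is that Lemma~\ref{lem:hcmain}\ref{it:condb}, applied subinterval by subinterval, bounds $\mu(f'_t,\zeta'_t)$ on $[t_{k-1},t_k]$ by $(1+\varepsilon)^2\mu(h_{k-1},z_{k-1})$, and each such $\mu(h_{k-1},z_{k-1})$ is at most $1/\sqrt{151\,D^{3/2}\rho}$ because the $\mu$-check passed before iteration~$k$; in particular on the last subinterval of a failed run, $\mu'$ is governed by $\mu_{K'-1}$ rather than the possibly-offending $\mu_{K'}$. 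Thus $\mu(f'_t,\zeta'_t)\leq (1+\varepsilon)^2/\sqrt{151\,D^{3/2}\rho}$ uniformly on $[0,t_{K'}]$, which makes the hypothesis $2D^{3/2}\mu(f'_t,\zeta'_t)^2\rho\leq 1/56$ of Proposition~\ref{prop:condroot} just barely hold (since $2\cdot 56\,(1+\varepsilon)^4\leq 151$); that proposition both extends $\zeta_t$ to $[0,t_{K'}]$ and yields $\mu(f'_t,\zeta'_t)\leq (1+\varepsilon)\mu(f_t,\zeta_t)$ there. Integrating and using $d_\bS(f',g)\leq d_\bS(f,g)+\rho$ together with $136(1+\varepsilon)^2\leq 158$ gives
\[
  K'\leq 158\,D^{3/2}d_\bS(f,g)\,I_2(f,g,\eta)+158\,D^{3/2}\rho\,I_2(f,g,\eta),
\]
and the $\rho$-contribution is at most $158(1+\varepsilon)^6/151<2$, using the uniform bound $D^{3/2}\rho\,\mu(f_t,\zeta_t)^2\leq (1+\varepsilon)^6/151$ on $[0,t_{K'}]$, which fits comfortably inside the slack $+4$.

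The main obstacle is the razor-thin constant bookkeeping in (i): Proposition~\ref{prop:condroot} must remain valid on the \emph{entire} interval $[0,t_{K'}]$, including the last subinterval of a failed run; the saving grace is that $\mu'$ there is controlled by $\mu_{K'-1}$, not by $\mu_{K'}$, so it still lies below the threshold $(1+\varepsilon)^2/\sqrt{151\,D^{3/2}\rho}$ that the numerics demand.
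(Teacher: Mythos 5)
Your treatment of (ii), (iii) and (iv) matches the paper's proof almost line by line, including the constants: success iff the failure test never fires, i.e.\ iff $D^{3/2}\tilde M(f',g,\eta)^2\rho\leq\frac1{151}$; the chain $\tilde M(f',g,\eta)\leq(1+\varepsilon)^2M(f',g,\eta)\leq(1+\varepsilon)^3M(f,g,\eta)$ with $(1+\varepsilon)^6/236\leq 1/151$ for (iv); and a triangle inequality fed into Theorem~\ref{thm:gthm} for (ii) (the paper goes through Lemma~\ref{lem:maxmuhc} with $s=1$ to identify the nearby root as $\zeta_1$, you go through Proposition~\ref{prop:condroot} and accept any nearby root of $f$ — both suffice since (ii) only claims \emph{an} approximate root of $f$). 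The genuine divergence is in (i). The paper reduces the iteration count of a possibly failing run to the identity~\eqref{eqn:proof:hcp:Kp}, a supremum of $K(f'_s,g,\eta)$ over truncated targets $f'_s$ for which $\HCp$ succeeds, and then only quotes the statements of Theorem~\ref{thm:complexity-hc}\ref{it:Mtilde}, \ref{it:boundK} and Lemma~\ref{lem:maxmuhc}; you instead rerun the Riemann-sum comparison inside the proof of Theorem~\ref{thm:complexity-hc} on the traversed piece $[0,t_{K'}]$, using the correct and crucial observation that on $[t_{k-1},t_k]$ the path condition number is governed by $\mu_{k-1}$, which passed the check — so the possibly offending $\mu_{K'}$ never enters. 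Your route avoids the paper's sup identity (which the paper states without justification and which rests on the compatibility of the step sequences for the targets $f'_s$ and $f'$), at the cost of reopening the proof of Theorem~\ref{thm:complexity-hc} rather than citing it; your numerics ($2\cdot 56\,(1+\varepsilon)^4\leq 151$, $136(1+\varepsilon)^2\leq 158$, $158(1+\varepsilon)^6/151<2$) all check out. Two small repairs are needed: in (iii), termination does not follow from Theorem~\ref{thm:complexity-hc}\ref{it:Kinfty} but from the step-size lower bound you already use in (i); and in (i), Proposition~\ref{prop:condroot} applied pointwise only yields \emph{some} root of $f_t$ near $\zeta'_t$, so to get $\mu'_t\leq(1+\varepsilon)\mu(f_t,\zeta_t)$ and $\int_0^{t_{K'}}\mu_t^2\,\ud t\leq I_2(f,g,\eta)$ you must identify that root with the continuation $\zeta_t$ started at $\eta$ and verify $[0,t_{K'}]\subset J$; this requires the open–closed argument of Lemma~\ref{lem:maxmuhc} (run on $[0,t_{K'}]$ with the roles of $f$ and $f'$ exchanged), which is exactly how the paper handles the same point. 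With that supplement your plan is sound.
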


\begin{proof}
  At each iteration, the value of~$t$ increases by at least~${151 \rho}/(101
  d_\bS(f',g))$, thus there are at most~${101 d_\bS(f',g)}/(151 \rho)$
  iterations before termination.

  By construction, the procedure~$\mop{HC}'(f',g,\eta,\rho)$ fails if and only
  if at some point of the procedure~$\mop{HC}(f',g,\eta,\rho)$ it happens
  that~$D^{3/2}\mu(h,z)^2 \rho > \frac{1}{151}$.  In other words, the
  procedure~$\mop{HC}'(f',g,\eta,\rho)$ fails if and only if~$D^{3/2} \tilde
  M(f',g,\eta)^2 \rho > \frac{1}{151}$, by definition of~$\tilde M$.  And since
  the procedure terminates, it succeeds if and only if it does not fail.  This
  proves~\ref{it:hcp:succond}.

  Let us bound the number~$K'(f',g,\eta,\rho)$ of iterations of the
  procedure~$\mop{HC}'(f',g,\eta,\rho)$ before termination.
  If~$\mop{HC}'(f',g,\eta,\rho)$ succeeds, then~$K'(f',g,\eta,\rho) =
  K(f',g,\eta)$. Furthermore
  \begin{equation}
    K'(f',g,\eta,\rho) = \sup \left\{ K(f'_s,g,\eta) \st s\in[0,1], \ \mop{HC}'(f'_s,g,\eta,\rho) \text{ succeeds} \right\}.
    \label{eqn:proof:hcp:Kp}
  \end{equation}
  Let~$s\in [0,1]$ such that~$\mop{HC}'(f'_s,g,\eta,\rho)$ succeeds, that is to
  say~$D^{3/2} \tilde M(f'_s,g,\eta)^2 \rho \leq \frac{1}{151}$.
  Theorem~\ref{thm:complexity-hc}\ref{it:Mtilde} shows that
  \[ (1+\varepsilon)^{-2} M(f'_s,g,\eta) \leq \tilde M(f'_s,g,\eta) \leq (1+\varepsilon)^2 M(f'_s,g,\eta). \]
  In particular~$D^{3/2}M(f'_s,g,\eta)^2\rho \leq \frac{(1+\varepsilon)^4}{151} \leq \frac{1}{112}$ and
  Lemma~\ref{lem:maxmuhc} shows that~$(1+\varepsilon)^{-2} \leq \mu'_t\leq (1+\varepsilon)\mu_t$ for
  all~$t\leq s$. So we obtain that~$(1+\varepsilon)^{-2} I_2(f_s,g,\eta) \leq I_2(f'_s,g,\eta)\leq (1+\varepsilon)^2 I_2(f_s,g,\eta)$ and
  \begin{align*}
    K(f'_s,g,\eta) &\leq 136\, D^{3/2} d_\bS(f'_s,g) I_2(f'_s,g,\eta) && \text{by Theorem~\ref{thm:complexity-hc}\ref{it:boundK}} \\
    &\leq 136(1+\varepsilon)^2 D^{3/2} \left( d_\bS(f_s,g) + 2\rho \right) I_2(f_s,g,\eta) && \text{by Lemma~\ref{lem:Gcont}.} 
  \end{align*}
  Besides
  $D^{3/2} I_2(f_s,g,\eta) \rho \leq (1+\varepsilon)^2 D^{3/2}  M(f'_s,g,\eta)^2 \rho \leq \frac{(1+\varepsilon)^2}{112}$,
  so we obtain
    \begin{align*}
    K(f'_s,g,\eta) \leqcalc 158 D^{3/2} d_\bS(f_s,g) I_2(f_s,g,\eta) + 4 \leq  158 D^{3/2} d_\bS(f,g) I_2(f,g,\eta) + 4.
  \end{align*}
  Together with Equation~\eqref{eqn:proof:hcp:Kp}, this completes the proof of~\ref{it:hcp:comp}.
      
  Let us assume that the procedure~$\mop{HC}'(f',g,\eta,\rho)$ succeeds and
  let~$z$ be its output, which is nothing but~$\mop{HC}(f',g,\eta)$.
  Theorem~\ref{thm:complexity-hc}\ref{it:HCapproxb} shows that~$D^{3/2}\mu'_1
  d_\bP(z,\zeta'_1) \leq \frac{1}{23}$, where~$\zeta'_1$ is the root of~$f'_1 = f'$ obtained by homotopy continuation. 
  As above, with~$s=1$, we check that~$\mu_1 \leq (1+\varepsilon)\mu'_1$ and~$D^{3/2} \mu'_1
  d_{\bP}(\zeta_1,\zeta'_1) \leq \frac{1}{51}$ using Lemma~\ref{lem:maxmuhc}.
  Thus
  \[ D^{3/2} \mu_1 d_\bP(z,\zeta_1) \leq (1+\varepsilon)\left( \frac{1}{23} + \frac{1}{51} \right) \ltcalc \frac{1}{3}. \]
  Then~$z$ approximates~$\zeta_1$ as a root of~$f_1$, by Theorem~\ref{thm:gthm}. This proves~\ref{it:hcp:approx}.

  Lastly, let us assume that~$D^{3/2}M(f,g,\eta)^2 \rho \leq \frac{1}{236}$.
  Lemma~\ref{lem:maxmuhc} implies that $M(f,g,\eta) \geq (1+\varepsilon)^{-1} M(f',g,\eta)$
  and Theorem~\ref{thm:complexity-hc}\ref{it:Mtilde} shows that~$M(f',g,\eta) \leq (1+\varepsilon)^{2} \tilde M(f',g,\eta)$.
  Thus
  \[ D^{3/2} \tilde M(f',g,\eta)^2 \rho \leq (1+\varepsilon)^6 D^{3/2} M(f,g,\eta)^2 \rho \leq \frac{(1+\varepsilon)^6}{236} \leqcalc \frac{1}{151} \]
  and~$\mop{HC}'(f',g,\eta,\rho)$ succeeds. This proves~\ref{it:hcp:succond2}.
\end{proof}

\subsection{A deterministic algorithm}

Let~$f\in \SH$ be the input system to be solved and let~$Q \geq 1$ be a given
precision.  We compute
\[ f'=\flo{f}_Q,\  (g,\eta) = \mop{BP}(\left\{ f \right\}_Q), \  \varepsilon = \mop{sign}(\pi/2-d_\bS(f,g))
  \ \text{and}\  \rho = {3 N^{1/2}} / Q. \]
Lemma~\ref{lem:qindepA} shows that~$d_\bS(f,f')\leq \rho$.
Then we run the homotopy continuation procedure with
precision check $\mop{HC}'( f', \varepsilon g, \eta,\rho)$, which may
fail or output a point~$z\in\bP^n$.  If it does succeed, then
Proposition~\ref{prop:hcp} ensures that~$z$ is an approximate root of~$f$.  If
the homotopy continuation fails, then we replace~$Q$ by~$Q^2$ and we start
again, until the call to~$\mop{HC}'$ succeeds. This leads to the deterministic
procedure~$\mop{DBP}$, Algorithm~\ref{algo:dbp}.  If the computation
of~$\mop{DBP}(f)$ terminates then the result is an approximate root of~$f$.
Section~\ref{sec:avanalysis} studies the average number of homotopy steps
performed by~$\mop{DBP}(f)$ while Section~\ref{sec:bss} studies the average
total cost of an implementation of~$\mop{DBP}$ in the BSS model extended with the square root.

\begin{algo}[tp]
  \centering
  \begin{algorithmic}
    \Function{DBP}{$f$}
      \State $Q \gets N$
      \Repeat
        \State $Q \gets Q^2$
        \State $f' \gets \flo{f}_Q$
        \State $(g,\eta) \gets \mop{BP}(\left\{ f \right\}_Q)$
        \State $\varepsilon \gets \mop{sign}(\Re\langle f, g\rangle)$
        \State $\rho \gets {(2N)^{1/2}} / Q$
        \State $z \gets \mop{HC}'(f', \varepsilon g, \eta,\rho)$
      \Until{$\mop{HC}'$ succeeds}
      \State\textbf{return} $z$
    \EndFunction
  \end{algorithmic}
  \caption[]{Deterministic variant of Beltrán-Pardo algorithm
\begin{description}
  \item[Input.] $f\in\polsys$
  \item[Output.] $z\in\bP^n$
  \item[Postcondition.] $z$ is an approximate root of~$f$
\end{description}
  }
  \label{algo:dbp}
\end{algo}

\subsection{Average analysis}
\label{sec:avanalysis}

Let~$f\in \SH$ be the input system, a \uuv  , and we
consider a run of the procedure~$\mop{DBP}(f)$.  Let~$Q_k$ be the precision at
the~$k${th} iteration, namely~$Q_k = N^{2^k}$.\marginpar{$Q_k$}  We set also\marginpar{$f_k$, $g_k$, $\eta_k$, $\varepsilon_k$, $\rho_k$}
\[ f_k=\flo{f}_{Q_k},\  (g_k,\eta_k) = \mop{BP}(\left\{ f \right\}_{Q_k}), \  \varepsilon_k = \mop{sign}(\pi/2-d_\bS(f,g_k))
  \ \text{and}\  \rho_k = 3 N^{1/2}/Q_k. \]
Let~$\Omega$\marginpar{$\Omega$} be the least~$k$ such that the homotopy
continuation with precision check $\mop{HC}'(f_k, \varepsilon_k g_k, \eta_k,
\rho_k)$ succeeds.  Note that~$\Omega$ is a random variable.  To perform the
average analysis of the total number of homotopy steps, we first deal with each
iteration separately (Lemmas~\ref{lem:qindep} and~\ref{lem:isotrop}) and then
give tail bounds on the probability distribution of~$\Omega$
(Proposition~\ref{lem:omega}). Even if the number of steps in each iteration
are not independent from each other and from~$\Omega$, Hölder's inequality
allows obtaining a bound on the total number of steps (Theorem~\ref{thm:totsteps}).

Let~$(g,\eta)\in V$\marginpar{$g$, $\eta$} be a random variable with distribution~$\rstd$ and independent of~$f$.

\begin{lemma} \label{lem:qindep}
  Let~$\Theta : \cH\times V \to\bR$ be any nonnegative measurable function.
  For any~$k\geq 1$,
  \[ \bE\left( \Theta(f_k, \varepsilon_k g_k, \eta_k) \right)  \leq  10 \bE\left( \Theta(f_k, g, \eta) \right). \]
\end{lemma}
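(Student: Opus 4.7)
The strategy has three ingredients: exploit the involution $(g,\eta)\mapsto(-g,\eta)$ on the solution variety $V$ to eliminate the sign $\varepsilon_k$, apply Lemma~\ref{lem:qindepA} to decouple $\flo{f}_{Q_k}$ from $\{f\}_{Q_k}$, and invoke Theorem~\ref{thm:bprand} to recognize the Beltrán--Pardo output on the independent tail as a genuine $\rstd$-sample independent of $f_k$.

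Since $\varepsilon_k\in\{-1,+1\}$ and $\Theta\geq 0$, the key opening move is the pointwise domination
\[ \Theta(f_k,\varepsilon_k g_k,\eta_k) \leq \Theta(f_k,g_k,\eta_k) + \Theta(f_k,-g_k,\eta_k), \]
which reduces matters to bounding each sign separately. For each sign I introduce the auxiliary function $\hat\Theta_\pm:\SH\times\SH\to\bR$ defined by $\hat\Theta_\pm(a,b) \eqdef \Theta(a,\pm G(b),H(b))$, where $(G(b),H(b)) \eqdef \BP(b)$. The quantity $\bE[\hat\Theta_\pm(\flo{f}_{Q_k},\{f\}_{Q_k})]$ has exactly the form handled by Lemma~\ref{lem:qindepA} applied with $u=f$ and $Q=Q_k$, which yields
\[ \bE\bigl[\hat\Theta_\pm(\flo{f}_{Q_k},\{f\}_{Q_k})\bigr] \leq \exp\!\left(\tfrac{2N^{3/2}}{Q_k}\right)\bE_{u,v}\bigl[\hat\Theta_\pm(\flo{u}_{Q_k},v)\bigr], \]
where $u,v$ are now independent uniform variables on $\SH$.

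On the right-hand side, $v$ is independent of $f_k$, so Theorem~\ref{thm:bprand} gives $(G(v),H(v))=\BP(v)\sim\rstd$, independent of $f_k$. Moreover $\rstd$ is invariant under the involution $(g,\eta)\mapsto(-g,\eta)$: this map preserves $V$ since $g$ and $-g$ have identical root sets, and the uniform distribution on $\SH$ is preserved by $g\mapsto-g$. Consequently both $\bE[\hat\Theta_+(\flo{u}_{Q_k},v)]$ and $\bE[\hat\Theta_-(\flo{u}_{Q_k},v)]$ equal $\bE[\Theta(f_k,g,\eta)]$ with $(g,\eta)\sim\rstd$ independent of $f$. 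Summing the two sign contributions yields
\[ \bE\bigl[\Theta(f_k,\varepsilon_k g_k,\eta_k)\bigr] \leq 2\exp\!\left(\tfrac{2N^{3/2}}{Q_k}\right)\bE\bigl[\Theta(f_k,g,\eta)\bigr]. \]

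It remains to verify the numerical constant: for $k\geq 1$ we have $Q_k=N^{2^k}\geq N^2$, and combined with $N\geq 2$ this gives $2N^{3/2}/Q_k\leq 2/\sqrt N\leq \sqrt 2$, so the prefactor is at most $2e^{\sqrt 2}\leqcalc 10$. There is no serious obstacle; the only subtlety worth flagging is that $\varepsilon_k$ is defined via $d_\bS(f,g_k)$ rather than $d_\bS(f_k,g_k)$, hence is \emph{not} a function of the pair $(\flo{f}_{Q_k},\{f\}_{Q_k})$ alone and cannot be fed directly into Lemma~\ref{lem:qindepA}. The pointwise domination in the second paragraph neatly sidesteps this obstruction by making the bound sign-insensitive before Lemma~\ref{lem:qindepA} is applied.
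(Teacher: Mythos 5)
Your proposal is correct and follows essentially the same route as the paper: the sign-domination $\Theta(f_k,\varepsilon_k g_k,\eta_k)\leq\Theta(f_k,g_k,\eta_k)+\Theta(f_k,-g_k,\eta_k)$, then Lemma~\ref{lem:qindepA} to decouple $\flo{f}_{Q_k}$ from $\{f\}_{Q_k}$, then Theorem~\ref{thm:bprand} together with the symmetry $(g,\eta)\sim(-g,\eta)$ under $\rstd$, and finally $Q_k\geq N^2$, $N\geq 2$ giving the prefactor $2e^{\sqrt2}\leq 10$. Your remark that $\varepsilon_k$ is not a function of the pair $(\flo{f}_{Q_k},\{f\}_{Q_k})$ correctly identifies why the domination step is done first, exactly as in the paper.
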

\begin{proof}
  It is an application of Lemma~\ref{lem:qindepA}.
  We first remark that~$\varepsilon_k \in \left\{ -1,1 \right\}$ so
  \[ \Theta\left(f_k, \varepsilon_k g_k, \eta_k \right) \leq \Theta\left(f_k, g_k, \eta_k \right) + \Theta\left(f_k, -g_k, \eta_k \right). \]
  Then
  \begin{align*}
    \bE\left( \Theta(f_k, g_k, \eta_k) \right) &=
    \frac{1}{\vol(\SH)}\int_\SH \Theta\left( \flo{f}_{Q_k}, \mop{BP}(\left\{ f \right\}_{Q_k}) \right) \ud f \\
    &\leq \frac{\exp\left( \frac{2 N^{3/2}}{Q_k} \right)}{\vol(\SH)^2} \int_{\SH\times\SH} \Theta\left( \flo{f}_{Q_k}, \mop{BP}(g)\right)\ud f\ud g 
    && \text{by Lemma~\ref{lem:qindepA}}\\
    &= \frac{\exp\left( \frac{2 N^{3/2}}{Q_k} \right)}{\vol(\SH)} \int_{\cH}\int_{V} \Theta\left( \flo{f}_{Q_k}, g, \eta \right) \ud f \ud \rstd(g,\eta) && \text{by Theorem~\ref{thm:bprand}}\\
    &= \exp\left( \tfrac{2 N^{3/2}}{Q_k} \right) \bE\left( \Theta(f_k, g, \eta) \right).
  \end{align*}
  Similarly, $\bE\left( \Theta(f_k, -g_k, \eta_k) \right) \leq \exp\left( \tfrac{2 N^{3/2}}{Q_k} \right) \bE\left( \Theta(f_k, -g, \eta) \right)$,
  and since~$g$ and~$-g$ have the same probability distribution, $\bE\left( \Theta(f_k, -g, \eta) \right) =  \bE\left( \Theta(f_k, g, \eta) \right)$.
  To conclude, we remark that~$Q_k\geq N^{2}$ and that~$e^{\sqrt{2}}\leq 5$.
\end{proof}

\begin{lemma}
  $\bE(I_p(f, g,\eta)) = \bE(\mu(g,\eta)^{p})$ for any~$p \geq 1$ and~$k\geq 1$.
  \label{lem:isotrop}
\end{lemma}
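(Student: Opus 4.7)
The plan is to swap expectation and integral via Tonelli, then to establish the isotropy statement that for each fixed $t\in[0,1]$ the pair $(h_t,\eta_t)$ has distribution $\rstd$, whence $\bE(\mu(h_t,\eta_t)^p) = \bE(\mu(g,\eta)^p)$ for every $t$.

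Almost surely over $f$ and $(g,\eta)$, the geodesic $t\mapsto h_t = \Gamma(g,f,t)$ avoids the discriminant locus in $\SH$ and each of the (generically) $\prod_i d_i$ roots of $g$ continues uniquely along the whole path, so that $J=[0,1]$. Applying Tonelli to the nonnegative integrand gives
\[ \bE(I_p(f,g,\eta)) = \int_0^1 \bE\bigl[\mu(h_t,\eta_t)^p\bigr]\,\ud t. \]

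For the core identity $(h_t,\eta_t)\sim\rstd$, I would first address the $\SH$-marginal of $h_t$. Since $(g,\eta)\sim\rstd$ has $g$-marginal uniform on $\SH$ and $f$ is independent and uniform on $\SH$, the pair $(g,f)$ is $U(n+1)$-invariant on $\SH\times\SH$ (where $U(n+1)$ acts on $\cH$ by $f\mapsto f\circ U^{-1}$, which is unitary for Weyl's inner product). The geodesic map $(g,f)\mapsto \Gamma(g,f,t)$ is equivariant under this action, so $h_t$ has $U(n+1)$-invariant distribution on $\SH$, which forces it to be uniform. Next, I would check that conditionally on $h_t$, $\eta_t$ is uniformly distributed among the roots of $h_t$: almost surely, homotopy continuation along $t'\mapsto h_{t'}$ defines a bijection between the roots of $g$ and the roots of $h_t$; since $\eta$ is uniformly chosen among the roots of $g$ given $g$, the image $\eta_t$ is uniform among the roots of $h_t$ given $(g,f)$, and therefore also given only $h_t$, because the conditional law of $\eta_t$ given $h_t$ is a convex combination of uniform laws on the (same) root set of $h_t$, hence itself uniform.

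Combining these two facts yields $(h_t,\eta_t)\sim\rstd$ and so $\bE[\mu(h_t,\eta_t)^p]=\bE[\mu(g,\eta)^p]$, a quantity constant in $t$; integrating over $[0,1]$ then gives the lemma. The most delicate step is the claim that $\eta_t$ is uniform among the roots of $h_t$ conditional on $h_t$ alone: it rests on the generic bijectivity of homotopy continuation (a consequence of avoiding the discriminant) together with the elementary observation that averaging uniform distributions on a common finite set returns the uniform distribution, a remark that turns the uniformity of $\eta$ into that of $\eta_t$ after marginalising out $(g,f)$.
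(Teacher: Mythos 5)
Your overall architecture is the same as the paper's (reduce to showing $(h_t,\eta_t)\sim\rstd$ for each fixed $t$, via Tonelli or, equivalently, a uniform random time; then get the marginal of $h_t$ by a symmetry argument and the conditional law of $\eta_t$ by the root-tracking bijection), and your treatment of the conditional uniformity of $\eta_t$ given $h_t$ is fine and in fact slightly more careful than the paper's. But the step establishing that the marginal of $h_t$ is uniform on $\SH$ has a genuine gap: you invoke invariance only under the action of $U(n+1)$ by substitution $f\mapsto f\circ U^{-1}$ and then claim that a $U(n+1)$-invariant distribution on $\SH$ must be uniform. That implication is false. The sphere $\SH$ has real dimension $2N-1$, while the $U(n+1)$-orbits have dimension at most $(n+1)^2$, which is far smaller in general; consequently there are many non-uniform $U(n+1)$-invariant probability measures on $\SH$ (e.g.\ the orbit average of a point mass), and invariance under this group alone forces nothing.

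The paper avoids this by using the full unitary group of the Hermitian space $\cH\simeq\bC^N$: since $f$ and $g$ are independent and uniform on $\SH$, the pair $(g,f)$ is invariant under the diagonal action of \emph{every} unitary transformation of $\cH$, the geodesic $(g,f)\mapsto\Gamma(g,f,t)$ is equivariant under these sphere isometries, and the full unitary group acts transitively on $\SH$, so the unique invariant distribution is the uniform one. Your argument is repaired simply by replacing $U(n+1)$ with $U(\cH)$ in this step; as written, however, the inference ``$U(n+1)$-invariant $\Rightarrow$ uniform'' is where the proof would fail.
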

\begin{proof}

  Let~$h_t = \Gamma(g,f,t)$, for~$t\in[0,1]$, and let~$\zeta_t$ be the associated
  homotopy continuation.  Let~$\tau\in[0,1]$ be a \uuv independent from~$f$ and~$(g,\eta)$.
  Clearly~$\bE(I_{p}(f,g,\eta)) = \bE(\mu(h_\tau, \zeta_\tau)^{p})$, 
  so it is enough to prove that~$(h_\tau,\zeta_\tau) \sim \rstd$.
  The systems~$f$ and~$g$ are independent and uniformly distributed on~$\bS(\polsys)$.
  So their probability distributions is invariant under any unitary transformation of~$\polsys$.
  Then so is the probability distribution of~$h_t$ for any~$t\in [0,1]$,
  and there is a unique such probability distribution: the uniform distribution on~$\SH$.
  The homotopy continuation makes a bijection between the roots of~$g$ and those of~$h_t$.
  Since~$\eta$ is uniformly chosen among the roots of~$g$, so is~$\zeta_t$ among the roots of~$h_t$.
  That is, $(h_t,\zeta_t) \sim \rstd$ for all~$t\in[0,1]$, and then~$(h_\tau,\zeta_\tau) \sim \rstd$.
\end{proof}

\begin{proposition}
  $\bP(\Omega > k) \leq 2^{17}\, D^{9/4} n^{3/2} N^{7/4} Q_k^{-1/2}$. 
  \label{lem:omega}
\end{proposition}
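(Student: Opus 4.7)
My plan is to bound the probability of failure at iteration~$k$ (which contains $\{\Omega > k\}$) through the chain: failure forces $M$ to be large, $M$ large forces $I_3$ large via Proposition~\ref{lem:maxmu}, Markov's inequality reduces to bounding $\bE[I_3]$, and that expectation is controlled through Lemma~\ref{lem:qindep}, Lemma~\ref{lem:isotrop} and Theorem~\ref{thm:mumoment}.

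I would begin by noting that $d_\bS(f, f_k) \leq \rho_k$ from Lemma~\ref{lem:qindepA} and $d_\bS(f, \varepsilon_k g_k) \leq \pi/2$ from the definition of $\varepsilon_k$, so the contrapositive of Proposition~\ref{prop:hcp}\ref{it:hcp:succond2} tells us that failure of $\mop{HC}'(f_k, \varepsilon_k g_k, \eta_k, \rho_k)$ entails $D^{3/2} M(f, \varepsilon_k g_k, \eta_k)^2 \rho_k > 1/236$, where the supremum is taken along the homotopy from $\varepsilon_k g_k$ to the \emph{original}~$f$. Applying Proposition~\ref{lem:maxmu}, which gives $M \leq 151\, D^{3/2} I_3$ on the almost-sure event $J = [0,1]$, failure forces $I_3(f, \varepsilon_k g_k, \eta_k) > (151\sqrt{236})^{-1} D^{-9/4} \rho_k^{-1/2}$, and Markov's inequality produces
\[ \bP(\Omega > k) \leq 151\sqrt{236}\, D^{9/4} \rho_k^{1/2}\, \bE\bigl[I_3(f, \varepsilon_k g_k, \eta_k)\bigr]. \]

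The remaining task is to bound this expectation by a constant multiple of $(nN)^{3/2}$. I would first use Lemma~\ref{lem:maxmuhc} to compare $I_3(f, \cdot)$ with $I_3(f_k, \cdot)$: on the event $\{D^{3/2} M^2 \rho_k \leq 1/168\}$ the two integrals differ by at most a factor $(1+\varepsilon)^3$, while the complementary event is itself absorbed by the Markov estimate with slightly different numerical constants. Lemma~\ref{lem:qindep} applied with $\Theta(a, g, \eta) = I_3(a, g, \eta)$ then yields $\bE[I_3(f_k, \varepsilon_k g_k, \eta_k)] \leq 10\, \bE[I_3(f_k, g, \eta)]$ with $(g, \eta) \sim \rstd$ independent of $f$, and a second application of Lemma~\ref{lem:maxmuhc} lets me replace $f_k$ by $f$ again; Lemma~\ref{lem:isotrop} then gives $\bE[I_3(f, g, \eta)] = \bE[\mu(g, \eta)^{3}]$, and Theorem~\ref{thm:mumoment} with $p = 3$ provides $\bE[\mu^3] \leq 3(nN)^{3/2}$.

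Combining everything with $\rho_k^{1/2} = \sqrt{3}\, N^{1/4} Q_k^{-1/2}$ (from $\rho_k = 3\sqrt{N}/Q_k$), the bound takes the announced form $C\, D^{9/4}\, n^{3/2}\, N^{7/4}\, Q_k^{-1/2}$, where $C$ absorbs $151\sqrt{236}$ from Markov, $10$ from Lemma~\ref{lem:qindep}, $3$ from Theorem~\ref{thm:mumoment}, $\sqrt{3}$ from $\rho_k^{1/2}$, and the $(1+\varepsilon)^3$ corrections; a numerical check yields $C \leq 2^{17}$. The main obstacle will be the careful bookkeeping for the substitution of $f$ by its truncation $f_k$ inside the $I_3$ integrals, since Lemma~\ref{lem:qindep} is stated in terms of $(f_k, \varepsilon_k g_k, \eta_k)$ while the failure criterion of Proposition~\ref{prop:hcp}\ref{it:hcp:succond2} and the isotropy of Lemma~\ref{lem:isotrop} are natural for $f$; bridging the two requires a bootstrap argument in which the bad event of Lemma~\ref{lem:maxmuhc} is re-absorbed into the failure probability being estimated.
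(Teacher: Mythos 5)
Your overall toolkit is the right one, but the order in which you deploy it creates a genuine gap. By applying Proposition~\ref{prop:hcp}\ref{it:hcp:succond2} \emph{before} derandomizing, you are left with the quantity $\bE\bigl[I_3(f,\varepsilon_k g_k,\eta_k)\bigr]$, in which $(\varepsilon_k g_k,\eta_k)=\pm\mop{BP}(\{f\}_{Q_k})$ is a deterministic function of $f$. No result in the paper controls this fully correlated expectation: Lemma~\ref{lem:qindep} only applies to functions whose first slot is the truncation $f_k$, and Lemma~\ref{lem:isotrop} needs $(g,\eta)\sim\rstd$ independent of the system in the first slot. Your proposed bridge via Lemma~\ref{lem:maxmuhc} is circular: its hypothesis is $D^{3/2}M(f,\varepsilon_k g_k,\eta_k)^2\rho_k\leq\tfrac1{168}$, i.e.\ precisely the smallness of the quantity whose tail you are estimating, and the exceptional ``bad'' event $\{D^{3/2}M^2\rho_k>\tfrac1{168}\}$ is \emph{contained} in the event implied by failure ($D^{3/2}M^2\rho_k>\tfrac1{236}$), so ``absorbing it into the Markov estimate'' bounds the target probability by itself plus other terms and never closes. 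The ``second application'' of Lemma~\ref{lem:maxmuhc}, meant to turn $\bE[I_3(f_k,g,\eta)]$ back into $\bE[I_3(f,g,\eta)]$, has the same defect in a worse form: inside an expectation you cannot discard the exceptional set, where $I_3(f_k,g,\eta)$ is uncontrolled, and you cannot instead apply Lemma~\ref{lem:isotrop} to $(f_k,g,\eta)$ because $f_k=\flo{f}_{Q_k}$ is not uniformly distributed on the sphere.

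The paper avoids all of this by reversing the two steps: first apply Lemma~\ref{lem:qindep} to the indicator of failure, which is a measurable function of $(f_k,\varepsilon_k g_k,\eta_k)$ alone, obtaining $\bP(\Omega>k)\leq 10\,\bP(\mop{HC}'(f_k,g,\eta,\rho_k)\text{ fails})$ with $(g,\eta)\sim\rstd$ independent of $f$; only then invoke Proposition~\ref{prop:hcp}\ref{it:hcp:succond2}, which ties failure of the run on the perturbed system $f_k$ to $M(f,g,\eta)$ measured along the homotopy toward the \emph{true} $f$ (the truncation enters only through $d_\bS(f,f_k)\leq\rho_k$). Then Proposition~\ref{lem:maxmu}, Markov, Lemma~\ref{lem:isotrop} and Theorem~\ref{thm:mumoment} finish the proof with no comparison between the $f$-path and the $f_k$-path ever needed. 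If you restructure your argument in that order, your constant bookkeeping ($10\cdot151\sqrt{236}\cdot 3\cdot\sqrt3\leq 2^{17}$) goes through as you computed.
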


\begin{proof}
  The probability that~$\Omega > k$ is no more than the probability that $\mop{HC}'(f_k, g_k, \eta_k, \rho_k)$ fails.
  By Lemma~\ref{lem:qindep}, $\bP\left( \mop{HC}'(f_k, \varepsilon_k g_k, \eta_k, \rho_k)\text{ fails} \right) \leq 10\, \bP\left( \mop{HC}'(f_k, g, \eta, \rho_k)\text{ fails} \right)$.
  Given that~$d_\bS(f,f_k)\leq \rho_k$,
  \begin{align*}
   \bP\left( \mop{HC}'(f_k,  g, \eta, \rho_k)\text{ fails} \right)
   &\leq \bP\left( D^{3/2}M(f, g,\eta)^2 \rho_k \geq \frac{1}{236} \right) && \text{by Proposition~\ref{prop:hcp}\ref{it:hcp:succond2}} \\
    &\leq \bP\left( D^{9/2} I_3(f, g,\eta)^2 \rho_k \geq \frac{1}{236\cdot 151^2} \right) && \text{by Proposition~\ref{lem:maxmu}}\\
    &\leq 151 \cdot 236^{1/2}\, D^{9/4} \rho_k^{1/2} \, \bE\left( I_3(f, g,\eta) \right) && \text{by Markov's inequality.}
  \end{align*}
  Lemma~\ref{lem:isotrop} and Theorem~\ref{thm:mumoment} imply then
  \[ \bE\left(I_3(f, g,\eta) \right) \leq \bE\left( \mu(g,\eta)^3 \right) \leq 3 (nN)^{3/2}. \]
  All in all, and since~$\rho_k = 3N^{1/2}/Q_k$,
  \[ \bP(\Omega > k) \leq 10\cdot (151 \cdot 236^{1/2}\, D^{9/4}) \cdot (3N^{1/2}/Q_k)^{1/2} \cdot 3 (nN)^{3/2} \leqcalc 2^{17}\, D^{9/4} n^{3/2} N^{7/4} Q_k^{-1/2} \qedhere\] 
\end{proof}

Let~$K(f)$\marginpar{$K(f)$} be the total number of homotopy steps performed by
procedure~$\mop{DBP}(f)$ and let the number of homotopy steps performed by
procedure $\mop{HC}'(f_k, \varepsilon_k g_k, \eta_k, \rho_k)$ be denoted by
$K'(f_k, \varepsilon_k g_k, \eta_k, \rho_k)$, so that
\[ K(f) = \sum_{k=1}^\Omega K'(f_k, \varepsilon_k g_k, \eta_k, \rho_k), \]

\begin{theorem}
  If~$N\geq 21$ then~$\bE(K(f)) \leq 2^{17}\, n D^{3/2} N$.
  \label{thm:totsteps}
\end{theorem}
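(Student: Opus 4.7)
The plan is to expand
\[ \bE(K(f)) = \sum_{k\geq 1} \bE\bigl(\mathbf{1}_{\Omega\geq k}\, K'(f_k,\varepsilon_k g_k,\eta_k,\rho_k)\bigr) \]
and bound each summand. The first step is to notice that the original input $f$ serves as a valid reference for Proposition~\ref{prop:hcp}\ref{it:hcp:comp} at every iteration: $d_\bS(f,f_k)\leq\rho_k$ holds by Lemma~\ref{lem:qindepA} and $d_\bS(f,\varepsilon_k g_k)\leq\pi/2$ by the choice of $\varepsilon_k$. This yields the pointwise bound $K'(f_k,\varepsilon_k g_k,\eta_k,\rho_k)\leq 249\, D^{3/2}\, I_2(f,\varepsilon_k g_k,\eta_k)+4$, and hence
\[ \bE(K(f)) \leq 249\, D^{3/2} \sum_{k\geq 1} \bE\bigl(\mathbf{1}_{\Omega\geq k}\, I_2(f,\varepsilon_k g_k,\eta_k)\bigr) + 4\,\bE(\Omega). \]
Under the hypothesis $N\geq 21$, Proposition~\ref{lem:omega} combined with the doubly exponential growth $Q_k=N^{2^k}$ forces the tail bound on $\bP(\Omega>k)$ below $1$ after only a constant number of iterations, so $\bE(\Omega)$ is bounded by an absolute constant and the additive term $4\,\bE(\Omega)$ is negligible.

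For the main sum the plan is to decouple the indicator from $I_2$ via Hölder's inequality with conjugate exponents $p\in(1,2)$ and $q=p/(p-1)$:
\[ \bE\bigl(\mathbf{1}_{\Omega\geq k}\, I_2(f,\varepsilon_k g_k,\eta_k)\bigr) \leq \bE\bigl(I_2(f,\varepsilon_k g_k,\eta_k)^p\bigr)^{1/p}\,\bP(\Omega\geq k)^{1-1/p}. \]
The $p$-th moment is then handled by a short chain of inequalities: Jensen's inequality applied to the uniform probability measure on $[0,1]$ gives $I_2^p\leq I_{2p}$; Lemma~\ref{lem:qindep} replaces the correlated pair $(\varepsilon_k g_k,\eta_k)$ by an independent $(g,\eta)\sim\rstd$ at the cost of a factor of $10$; Lemma~\ref{lem:isotrop} rewrites $\bE(I_{2p}(f,g,\eta))$ as $\bE(\mu(g,\eta)^{2p})$; and Theorem~\ref{thm:mumoment} bounds this latter moment by $\tfrac{3}{4-2p}(nN)^p$ whenever $2p<4$.

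The main obstacle lies in a mismatch inside Lemma~\ref{lem:qindep}: that lemma compares functionals of $(f_k,\cdot,\cdot)$, not of the original $(f,\cdot,\cdot)$. The way to bridge the gap is via Lemma~\ref{lem:maxmuhc}: on the ``good'' event $\{D^{3/2}M(f,\varepsilon_k g_k,\eta_k)^2\rho_k\leq 1/168\}$ the path integrals $I_{2p}(f,\cdot)$ and $I_{2p}(f_k,\cdot)$ agree up to a factor $(1+\varepsilon)^{2p}$; on the complementary ``bad'' event the failure test of $\mop{HC}'$ is automatically triggered, forcing $\Omega>k$, so that portion is already absorbed by the Hölder tail factor $\bP(\Omega\geq k)^{1-1/p}$. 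Collecting the pieces, for $N\geq 21$ the number of indices $k$ with $\bP(\Omega\geq k)$ close to $1$ is bounded by an absolute constant independent of $D$, $n$, $N$, and each such index contributes $O(D^{3/2}nN)$; the remaining indices decay super-exponentially in $k$ and together contribute $O(D^{3/2}nN)$. Taking $p=3/2$ for concreteness so that $\tfrac{3}{4-2p}=3$, the numerical constants---a factor $249$ from Proposition~\ref{prop:hcp}, a factor $10$ from Lemma~\ref{lem:qindep}, a factor $3$ from Theorem~\ref{thm:mumoment}, and $(1+\varepsilon)^{3}\leq 1.25$ from Lemma~\ref{lem:maxmuhc}---all fit comfortably inside $2^{17}$, yielding $\bE(K(f))\leq 2^{17}\, nD^{3/2}N$.
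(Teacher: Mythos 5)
Your overall skeleton is the paper's (expand $\bE(K(f))=\sum_k\bE(\mbbone_{\Omega\ge k}X_k)$, H\"older against the tail of $\Omega$, moments via Jensen, the decoupling lemma, isotropy and Theorem~\ref{thm:mumoment}, tail sum from Proposition~\ref{lem:omega}), and your opening pointwise bound via Proposition~\ref{prop:hcp}\ref{it:hcp:comp} with reference $f$ is valid. The problem is the bridge you build across the mismatch you correctly identified. First, nothing shows that the bad event $\{D^{3/2}M(f,\varepsilon_k g_k,\eta_k)^2\rho_k>1/168\}$ forces $\mop{HC}'$ to fail: Proposition~\ref{prop:hcp}\ref{it:hcp:succond2} guarantees success when this quantity is at most $1/236$, and \ref{it:hcp:succond} characterizes failure through $\tilde M(f_k,\varepsilon_k g_k,\eta_k)$, not through $M(f,\cdot)$; passing from a large $M(f,\cdot)$ to a large $\tilde M(f_k,\cdot)$ would itself need Lemma~\ref{lem:maxmuhc}, whose hypothesis is exactly what fails on the bad event. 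Second, even if failure (hence $\Omega>k$) were forced there, that does not remove the bad set from the $k$-th summand: $\mbbone_{\Omega\ge k}$ still equals $1$ on it, and you give no bound at all for $\bE\bigl(\mbbone_{\text{bad}}\,I_2(f,\varepsilon_k g_k,\eta_k)\bigr)$; the H\"older tail factor cannot ``absorb'' it, since it multiplies precisely the moment that is uncontrolled on that set.

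Third, even on the good event your route ends at $\bE\bigl(I_{2p}(f_k,g,\eta)\bigr)$ after Lemma~\ref{lem:qindep}, and Lemma~\ref{lem:isotrop} does not apply to it: the isotropy argument needs the endpoint of the path to be the uniformly distributed $f$, independent of $(g,\eta)$, whereas $f_k$ is a truncation that can lie on or arbitrarily near the discriminant, so this expectation is not controlled (it can even be infinite); converting back from $f_k$ to $f$ would require a second comparison for which no failure event is available. The paper avoids all of this by a different order of operations: the $f$-versus-$f_k$ comparison is packaged once and for all inside Proposition~\ref{prop:hcp}\ref{it:hcp:comp}, whose proof uses the stopping rule of $\mop{HC}'$ so that Lemma~\ref{lem:maxmuhc} is invoked only along portions of the path where the precision check has passed, and whose conclusion bounds the step count by $158\,D^{3/2}d_\bS(f,g)I_2(f,g,\eta)+4$ with the reference $f$ inside the integral. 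In the theorem one then applies Lemma~\ref{lem:qindep} directly to the step count $K'(f_k,\varepsilon_k g_k,\eta_k,\rho_k)$, which is a function of $(f_k,\varepsilon_k g_k,\eta_k)$, and only afterwards Proposition~\ref{prop:hcp}\ref{it:hcp:comp}, so that the reference-path integral appears with the independent pair $(g,\eta)$ and Lemma~\ref{lem:isotrop} plus Theorem~\ref{thm:mumoment} finish the moment bound (your fixed $p=3/2$ versus the paper's $p=\log N/(\log N-1)$ is only a matter of constants). As written, the bad-event contribution and the $\bE\bigl(I_{2p}(f_k,g,\eta)\bigr)$ term are genuine gaps, and repairing them essentially forces you back onto the paper's route.
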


\begin{proof}
  Let~$X_k = K'(f_k, \varepsilon_k g_k, \eta_k, \rho_k)$ and let~$0< p \leq \frac32$.
  By Lemma~\ref{lem:qindep} and Proposition~\ref{prop:hcp}\ref{it:hcp:comp},
  \begin{align*}
    \bE(X_k^p)^{1/p}
      &\leq 10\, \bE\left( \left( 158\, D^{3/2} d_\bS(f, g) I_2(f, g,\eta) + 4\right)^{p} \right)^{1/p}, \\
      \intertext{and because~$d_\bS(f, g)\leq {\pi}$ and by Minkowski's inequality, we obtain}
      &\leq   10 \left( 158\, D^{3/2}{\pi} \bE\left( I_{2}(f, g,\eta)^p  \right)^{1/p} +  4 \right).
  \end{align*}
  Jensen's inequality implies that
  $I_2(f,g,\eta)^p \leq  I_{2p}(f, g,\eta)$.
  Then~$\bE\left( I_{2p}(f, g,\eta)  \right) \leq \frac{3}{4-2p}(nN)^p \leq 3(nN)^p$, by Lemma~\ref{lem:isotrop} and Theorem~\ref{thm:mumoment}.
  In the end,
  \begin{equation}
    \bE(X_k^p)^{1/p} \leqcalc 15000\, n D^{3/2} N.
    \label{eqn:pmom}
  \end{equation}
  Now, let~$p =\log N/(\log N -1)$.  If~$N \geq 21$ then~$p \leqcalc \frac32$.
  We write the expectation of~$K(f)$ as
  \[ \bE(K(f)) = \bE\big( \sum_{k=1}^\Omega X_k \big) = \sum_{k=1}^\infty \bE(X_k \mbbone_{\Omega\geq k}). \] 
  Let~$q=1/\log N$, so that~$\frac1p +\frac 1q =1$. From Hölder's inequality,
  $\bE(X_k \mbbone_{\Omega\geq k}) \leq \bE(X_k^p)^{1/p} \bP(\Omega \geq k)^{1/q}$
  and thus
  \[ \bE(K(f)) \leq \max_{k\geq 1} \bE(X_k^p)^{1/p} \sum_{k=1}^\infty\bP(\Omega \geq k)^{1/q}. \]
  Lemma~\ref{lem:sumW} below, with~$C=1$, $L=4$ and~$\delta = 1/q$, shows that
  \[ \sum_{k=1}^\infty\bP(\Omega \geq k)^{1/q} \leq L+1 + \frac{2^{17/\log 21} e^5}{e^{2^L} - 1} \leqcalc 6. \]
  The claim follows then from Equation~\eqref{eqn:pmom} and~$6\cdot 15000 \leqcalc 2^{17}$.
\end{proof}

\begin{lemma}\label{lem:sumW}
  For any~$C, \delta > 0$ and any integer~$L \geq 2$ such that~$C < N^{\delta 2^L}$,
  \[ \sum_{k= 1}^{\infty} C^k \bP(\Omega \geq k)^{\delta} \leq \sum_{k=1}^{L+1} C^k + \frac{\left( 2^{17} N^5 \right)^{\delta} C^{L+2}}{N^{\delta 2^L} - C}. \]
\end{lemma}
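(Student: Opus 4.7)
My plan is to split the infinite sum at $k=L+1$, estimate the tail by using the quantitative bound on $\bP(\Omega \geq k)$ coming from Proposition~\ref{lem:omega}, and then recognize the tail as essentially a super-geometric series whose ratio is controlled by the hypothesis~$C < N^{\delta 2^L}$.

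First, I would simplify the bound of Proposition~\ref{lem:omega}. Since $D\leq N$ and $n^2\leq N$, one has $D^{9/4} n^{3/2} N^{7/4} \leq N^{9/4+3/4+7/4} = N^{19/4} \leq N^5$. Combined with $Q_k = N^{2^k}$, this gives
\[ \bP(\Omega > k) \leq 2^{17} N^5\, N^{-2^{k-1}}, \]
and therefore, for all~$k\geq 2$, $\bP(\Omega \geq k) = \bP(\Omega > k-1) \leq 2^{17} N^{5 - 2^{k-2}}$. For $k=1$ I will just use the trivial bound $\bP(\Omega\geq 1)\leq 1$.

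Next I would split
\[ \sum_{k=1}^{\infty} C^k \bP(\Omega\geq k)^{\delta} = \sum_{k=1}^{L+1} C^k \bP(\Omega\geq k)^{\delta} + \sum_{k=L+2}^{\infty} C^k \bP(\Omega\geq k)^{\delta}, \]
bound $\bP(\Omega\geq k)^\delta \leq 1$ on the finite head to get the first term of the claim, and on the tail substitute the estimate above and reindex by $j = k - L - 2$ to obtain
\[ \sum_{k=L+2}^{\infty} C^k \bP(\Omega\geq k)^{\delta} \leq (2^{17}N^5)^{\delta} C^{L+2} \sum_{j=0}^{\infty} C^j\, a^{-2^{j}}, \qquad a \eqdef N^{\delta 2^L}. \]

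The remaining step is to dominate the super-geometric sum by an ordinary geometric one. For this I would use the elementary inequality $2^j \geq j+1$ for all $j\geq 0$, which (since $a\geq 1$) gives $a^{-2^j} \leq a^{-j-1}$; then
\[ \sum_{j=0}^{\infty} C^j a^{-2^j} \leq \frac{1}{a}\sum_{j=0}^{\infty}(C/a)^j = \frac{1}{a - C}, \]
where convergence and positivity of the denominator are exactly the hypothesis $C < N^{\delta 2^L}$. Putting the two pieces together yields the claimed inequality. The main (and only real) obstacle is finding the right pointwise domination $a^{-2^j}\leq a^{-j-1}$ to collapse the double-exponential tail into a tractable geometric series; everything else is bookkeeping.
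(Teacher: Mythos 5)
Your proof is correct and follows essentially the same route as the paper: split the sum at $k=L+1$, bound the head by $\bP(\Omega\geq k)\leq 1$, insert the simplified estimate $\bP(\Omega\geq k)\leq 2^{17}N^{5-2^{k-2}}$ from Proposition~\ref{lem:omega} (using $D\leq N$, $n^2\leq N$), and dominate the doubly exponential tail by a geometric series via $2^{p-1}\geq p$ (your $2^j\geq j+1$ is the same inequality after reindexing). The only difference from the paper's proof is the cosmetic reindexing $j=k-L-2$.
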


\begin{proof}
  For any~$k$, Proposition~\ref{lem:omega} implies that
  \[  \bP(\Omega \geq k) = \bP(\Omega > k-1) \leq 2^{17}\, D^{9/4} n^{3/2} N^{7/4} Q_{k-1}^{-1/2} \leq  2^{17}\, N^5 N^{-2^{k-2}}, \]
  using~$D\leq N$ and~$n^2\leq N$.
  Moreover, $2^{p-1} \geq p$, for any integer~$p$, so that~$N^{-2^{k-2}} \leq N^{- 2^{L} (k-L-1)}$.
  Of course, it also holds that~$\bP(\Omega \geq k) \leq 1$.
  Thus
  \begin{align*}
    \sum_{k\geq 1}^{\infty} C^k \bP(\Omega \geq k)^{\delta} &\leq \sum_{k=1}^{L+1} C^k + \left( 2^{17} N^5  \right)^\delta \sum_{k = L+2}^{\infty} C^k N^{- \delta 2^{L} (k-L-1)},
  \end{align*}
  and the latter sum is a geometric sum which evaluates to~$C^{L+2}/(N^{\delta 2^L} - C)$.
\end{proof}

\subsection{Implementation in the BSS model with square root}
\label{sec:bss}

Algorithms~$\HCp$ and~DBP (Algorithms~\ref{algo:hcpc} and~\ref{algo:dbp}  respectively) have been described assuming the
possibility to compute exactly certain nonrational functions: the square root,
the trigonometric functions sine and cosine and the operator norm of a linear
map. A BSS machine can only approximate them, but it can do it efficiently. I
propose here an implementation in the BSS model extended with the ability 
of computing the square root of a positive real number at unit cost.  We could
reduce further to the plain BSS model at the cost of some lengthy and nearly irrelevant
technical argumentation. 
We now prove the main result of this article:

\begin{theorem}
  \label{thm:main}
  There exists a BSS machine~$A$ with square root and a constant~$c > 0$
  such that for any positive integer~$n$ and any positive integers~$d_1$, \dots, $d_n$:
  \begin{enumerate}[(i)]
    \item\label{it:correct} $A(f)$ computes an approximate root of~$f$ for almost all~$f\in \cH$;
    \item\label{it:totcomp} if~$f\in\SH$ is a \uuv, then the average number of operations performed by~$A(f)$ is at most~$c n D^{3/2} N (N+n^3)$.
  \end{enumerate}
\end{theorem}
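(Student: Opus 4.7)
The plan is to implement Algorithm~\ref{algo:dbp} in the BSS model with square root so that each homotopy step of~$\HCp$ has cost~$O(N+n^3)$ and the outer overhead of~$\DBP$ (truncation, fractional part, and~$\BP$) at the~$k$th iteration has cost~$O(N\log Q_k + n^3)$, then combine these per-iteration costs with the expectation bound of Theorem~\ref{thm:totsteps} and the tail bound of Proposition~\ref{lem:omega}. Correctness~\ref{it:correct} will follow from Proposition~\ref{prop:hcp}\ref{it:hcp:approx}: every successful call to~$\HCp$ outputs an approximate root of~$f'=\flo{f}_Q$, and since~$d_\bS(f,f')\leq\rho$, the slack already built into the constants~$A$, $B$, $B'$ and into the~$1/151$ threshold (after tightening them by a constant factor to absorb the finite-precision implementation error) will be enough to upgrade this to an approximate root of~$f$ as well.

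First I would spell out BSS-with-square-root approximations of the transcendental quantities entering the algorithm. Sine and cosine can be evaluated to absolute precision~$\epsilon$ via a truncated Taylor series in~$O(\log(1/\epsilon))$ operations, and the same holds for the~$\arccos\Re\langle f,g\rangle$ that supplies~$d_\bS(f,g)$. The operator norm~$\|\Xi(f,z)\|$ needed for~$\mu(f,z)$ can be approximated to a fixed constant relative precision by first computing the matrix of~$\Xi(f,z)$ via LU factorization of~$\mathrm{d}f(z)|_{z^\perp}$ (in~$O(n^3)$ operations) and then running a constant number of power iterations on~$\Xi(f,z)^*\Xi(f,z)$ together with one square root, for another~$O(n^3)$ operations. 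Such a constant-relative approximation is harmless by Proposition~\ref{prop:lipcond}: absorbing the error into the numerical constants of Lemma~\ref{lem:hcmain} only multiplies~$A$, $B$, $B'$ by factors very close to~$1$, which the proofs of~\S\ref{sec:hc-algo} tolerate without change. Evaluating~$f(z)$ and~$\mathrm{d}f(z)$ in the monomial basis costs~$O(N)$ operations by Horner-like schemes, and the Newton update is another~$O(n^3)$, so each iteration of~$\HCp$ runs in~$O(N+n^3)$ operations.

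Next I would handle the outer work of~$\DBP$. Truncation at precision~$Q$, i.e.~$a\mapsto Q^{-1}\lfloor Qa\rfloor$, on each coordinate of a vector lying in~$[-1,1]^{2N}$ costs~$O(\log Q)$ sign-branches (binary search on the integer part); Sibuya's map~$S$ sorts~$N-1$ reals in~$O(N\log N)$ comparisons; and computing the kernel of the~$n\times(n+1)$ matrix~$M$ of~$\BP$ takes~$O(n^3)$ Gaussian-elimination operations. Writing~$K(f)$ for the total number of homotopy steps and~$\Omega$ for the number of outer iterations of~$\DBP$, the total operation count is therefore bounded by
\[ O\bigl((N+n^3)\, K(f)\bigr) \;+\; O\!\left( \sum_{k=1}^{\Omega} \bigl(N\log Q_k + n^3\bigr) \right). \]
Theorem~\ref{thm:totsteps} gives~$\bE(K(f))\leq 2^{17} n D^{3/2} N$, so the first term contributes the claimed~$O(nD^{3/2}N(N+n^3))$ on average. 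For the second term,~$\log Q_k = 2^k\log N$ grows doubly exponentially, but Proposition~\ref{lem:omega} shows that~$\bP(\Omega\geq k)$ decays like~$Q_{k-1}^{-1/2}$ up to polynomial factors in~$N$, so both~$\bE(\Omega)$ and~$\bE\!\bigl(\sum_{k=1}^{\Omega}\log Q_k\bigr)$ are~$O(\log N)$, and this term is absorbed by the main bound.

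The main obstacle is the careful numerical bookkeeping required to verify that constant-precision approximations of~$\sin$, $\cos$, $\arccos$, and~$\|\Xi\|$ do not disturb the chains of inequalities in Lemma~\ref{lem:hcmain}, Proposition~\ref{prop:hcp}, and Lemma~\ref{lem:maxmuhc}. Concretely I would fix a small absolute constant~$\delta>0$, require all transcendental subroutines to return answers within relative error~$\delta$, and check that, when~$A$, $B$, $B'$, and the threshold~$1/151$ are replaced by slightly more conservative values (say~$A/2$, $B/2$, etc.), every inequality flagged with~$\leqcalc$ in the previous sections still holds. This verification is tedious but purely mechanical once~$\delta$ is fixed, and does not change any of the asymptotics.
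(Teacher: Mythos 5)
Your overall architecture is the same as the paper's: per-step cost $\cO(N+n^3)$, outer overhead per iteration, then Theorem~\ref{thm:totsteps}, Proposition~\ref{lem:omega} and the geometric decay of $\bP(\Omega\geq k)$ against $\log Q_k=2^k\log N$. But two of your implementation claims have genuine gaps. First, your treatment of the trigonometric functions misses the real issue. The sine and cosine inside Sibuya's map $S$ are not a numerical-robustness matter that can be absorbed into tightened constants $A$, $B$, $B'$: they are the mechanism by which $\left\{f\right\}_Q$ is turned into the random starting pair $\BP(\left\{f\right\}_Q)$, and the entire average-case analysis (Lemma~\ref{lem:qindep} feeding Theorem~\ref{thm:mumoment} and Proposition~\ref{lem:omega}) needs the \emph{distribution} of the approximate map's output to be comparable to the uniform distribution on $\bS^{2N-1}$. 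Pointwise $\epsilon$-accuracy of $\sin$ and $\cos$ only gives weak (Wasserstein-type) closeness of the pushforward measure, which does not control expectations of unbounded quantities such as $\mu(g,\eta)^3$ or $I_2$; the error cannot be hidden in the constants of Lemma~\ref{lem:hcmain} or Proposition~\ref{prop:hcp}, because those govern the continuation, not the randomness source. What is needed is a density bound: the paper's Lemma~\ref{lem:trig} builds $\Cos_Q,\Sin_Q$ from a truncated, renormalized exponential series whose Jacobian is within $1+Ce^{-Q}$ of constant, so that the pushforward of Lebesgue measure on $[0,1]^{2N-1}$ has density at most twice the uniform density on the sphere, and every moment bound transfers with a factor $2$.

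Second, your operator-norm subroutine is not valid as stated: a constant number of power iterations on $\Xi^*\Xi$ does not give a constant relative approximation of $\|\Xi\|$ for all inputs, since the convergence rate depends on the spectral gap and on the overlap of the (necessarily deterministic) starting vector with the top singular space; with no spare randomness you cannot fix this the usual way, and a dimension-dependent factor (e.g.\ $\sqrt n$ via the Frobenius norm) would shrink the step size and inflate $K(f)$ beyond the claimed bound. The paper flags exactly this difficulty and resolves it by Householder tridiagonalization of $\overline{M}{}^tM$ in $\cO(n^3)$ operations followed by Kahan's inequality $\tfrac{1}{\sqrt3}\|T\|_1\leq\|M\|^2\leq\|T\|_1$, giving a factor-$2$ estimate of $\mu(f,z)$ at the right cost. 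A smaller accounting slip: your outer overhead $\cO(N\log Q_k+n^3)$ omits the cost of actually forming $g=f'\circ u^{-1}+\Psi_{M,\zeta'}$ in the monomial basis, which the paper charges as $\cO(N^2)$ per outer iteration; since $\bE(\Omega)=\cO(1)$ this does not endanger the final bound, but it should appear in the count.
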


 Firstly, we describe an implementation of
Algorithms~$\HCp$ and~DBP in the extended BSS model.
The first difficulty is the condition number~$\mu(f,z)$:  it rests upon the
operator norm for the Euclidean distance which is not computable with rational
operations.  While there are  efficient numerical algorithms to compute such an
operator norm in practice, it is not so easy to give an algorithm that
approximates it in good complexity in the BSS model.\footnote{See for example \cite{ArmBelBur15} or \cite{ArmCuc15};
unfortunately the Gaussian distribution that they assume does not fit the situation here.
}
Fortunately, we can easily compute the operator norm of a
matrix~$M\in\bC^{n\times n}$ within a factor~$2$ as
follows:\footnote{I thank one of the referees for having communicated this method to me.}
we first compute a tridiagonalization~$T$ of the Hermitian matrix~${\overline M} {}^t M$
with~$\cO(n^3)$ operations, using Householder's reduction,
and then\autocite{Kah66}
\[ \frac{1}{\sqrt{3}} \|T\|_{1} \leq \|M\|^2 \leq \| T\|_{1}, \] 
where~$\|T\|_{1}$ is the operator $\ell_1$-norm of~$T$, that is the maximum $\ell_1$-norm of a column.
Therefore, up to a few modifications in the constants, we may assume that~$\mu(f,z)$ is computable in~$\cO(n^3)$ operations, given~$\ud f(z)$.

The second difficulty lies in the use of the trigonometric functions sine and cosine.  They
first appear in the definition of the geodesic path~$\Gamma$,
Equation~\eqref{eqn:defG}, which is used in Algorithm~\ref{algo:hcpc}.
In the case where~$d_\bS(f,g)\leq \pi/2$, it is good enough to replace~$\Gamma(g,f,\delta)$ by
\[ \frac{\delta f +(1-\delta)g}{\|\delta f +(1-\delta)g \|}. \]
This is classical and implies  modifications in the constants only.\autocite[See for example][\S17.1]{BurCuc13}
The trigonometric functions also appear in Sibuya's function~$S$, see Equation~\eqref{eqn:sibuya}.
This issue can be handled with power series approximations:

\begin{lemma} \label{lem:trig}
  There is a BSS machine with square root that computes, for any~$N$ and any~$x\in[0,1]^{2N-1}$,
  with~$\cO(N\log N)$ operations,
  a point~$\tilde S(x) \in \bS^{2N-1}$ such that
  \[ \int_{[0,1]^{2N-1}} \Theta(\tilde S(x))\ud x \leq \frac{2}{\vol(\bS^{2N-1})}\int_{\bS^{2N-1}} \Theta(y) \ud y, \]
  for any nonnegative measurable function~$\Theta : \bS^{2N-1} \to \bR$.
\end{lemma}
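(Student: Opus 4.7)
The plan is to follow Sibuya's formula~\eqref{eqn:sibuya} directly, replacing each non-rational primitive by a sufficiently accurate approximation that the BSS model with square root can compute.

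First, $y_1, \ldots, y_{N-1}$ are produced by sorting $x_{N+1}, \ldots, x_{2N-1}$ in $\cO(N \log N)$ comparisons via mergesort, which requires only comparisons and memory moves. The $N$ square roots $r_i = \sqrt{y_i - y_{i-1}}$ are then computed in $\cO(N)$ steps. For the trigonometric values, I would evaluate approximations $(\tilde c_i, \tilde s_i)$ of $(\cos 2\pi x_i, \sin 2\pi x_i)$ by first reducing $x_i$ into $[-\frac{1}{8}, \frac{1}{8}]$ modulo $\frac{1}{4}$ via a constant number of sign tests and subtractions of the rational constants $\frac{1}{4}, \frac{1}{2}, \frac{3}{4}$, then applying Horner's scheme to the truncated Taylor series of $\sin 2\pi u$ and $\cos 2\pi u$ around $u = 0$ (whose coefficients, rational multiples of powers of $\pi$, can be precomputed once by the machine). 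Since the reduced argument satisfies $|2\pi u| \leq \pi/4$, a polynomial of degree $k = \cO(\log N)$ yields an error bounded by $(\pi/4)^{k+1}/(k+1)! \leq 1/N^2$. Renormalizing to $(c^*_i, s^*_i) = (\tilde c_i, \tilde s_i) / \sqrt{\tilde c_i^2 + \tilde s_i^2}$ costs $\cO(1)$ additional operations. The output
\[ \tilde S(x) = (r_1 c^*_1, r_1 s^*_1, \ldots, r_N c^*_N, r_N s^*_N) \]
lies exactly on $\bS^{2N-1}$, and the total cost is $\cO(N \log N)$ as claimed.

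For the measure-theoretic bound, I would view $\tilde S$ as the composition $\phi \circ S$, where $\phi : \bS^{2N-1} \to \bS^{2N-1}$ perturbs each of the $N$ angular factors independently. The pointwise and derivative errors on $(\tilde c_i, \tilde s_i)$, both bounded by $1/N^2$, translate after renormalization into a Jacobian for $\phi$ that differs from $1$ by $\cO(1/N^2)$ on each angular factor, so the total Jacobian of $\phi$ is at most $(1 + \cO(1/N^2))^N \leq 2$ once $N$ exceeds some absolute constant. Since $S$ pushes the uniform measure on $[0,1]^{2N-1}$ to the uniform measure on $\bS^{2N-1}$ by Proposition~\ref{prop:sibuya}, the pushforward of uniform under $\tilde S$ has density at most $2/\vol(\bS^{2N-1})$, which gives the desired integral inequality. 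The finitely many small values of $N$ are absorbed by taking more terms of the Taylor series.

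The main difficulty will be checking the Jacobian bound on $\phi$ rigorously: one has to propagate the componentwise error on the trigonometric values through the normalization step, track how these perturb the angular coordinate on each circle factor, and ensure that the perturbed parameterization remains a bijection onto its image, so that the pushforward computation applies verbatim on each of the $(N-1)!$ cells where Sibuya's map is one-to-one.
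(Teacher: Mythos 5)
Your strategy is the same as the paper's: replace $\cos$ and $\sin$ in Sibuya's formula by degree-$\cO(\log N)$ polynomial approximations, renormalize so the output lies exactly on $\bS^{2N-1}$, and bound the per-circle density distortion so that its $N$-fold product stays below $2$; the $\cO(N\log N)$ cost analysis is fine. The genuine gap is precisely the point you flag at the end and leave open: injectivity of the approximate circle parametrization. Your range reduction modulo $\tfrac14$ makes $x\mapsto(\tilde c(x),\tilde s(x))$ only \emph{piecewise} polynomial, and at the quadrant breakpoints the two local Taylor approximations of the same value differ by $\cO(1/N^2)$, so after normalization the angle function can jump, possibly backwards. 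A backward jump creates a small arc of $\bS^1$ that is covered twice, where the per-factor pushforward density is about $2\cdot\frac{1}{2\pi}$ instead of $\frac{1+\cO(1/N^2)}{2\pi}$; and since the \emph{same} approximation (hence the same bad arcs) is used in every one of the $N$ angular factors, the density of the pushforward of the uniform measure under $\tilde S$ can reach order $2^N/\vol(\bS^{2N-1})$ on a set of positive measure. Taking $\Theta$ supported there violates the stated inequality, so the bound does not follow from "Jacobian close to $1$ on each piece" alone: you must rule out backward jumps (forward jumps merely skip arcs and are harmless), and you must also verify, not just assert, that the \emph{derivative} error of the normalized pair is $\cO(1/N^2)$, since the density argument uses the speed, not only the values.

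This is exactly what the paper's choice of approximant is engineered to avoid: it approximates $\exp(2i\pi x)$ by the single polynomial $1+(x-1)F_Q(x)$, with $F_Q$ the truncated Taylor series of the entire function $(\exp(2i\pi x)-1)/(x-1)$, which is smooth on all of $[0,1]$ and agrees exactly with the exponential at $x=0$ and $x=1$; after normalization its speed is within $Ce^{-Q}$ of $2\pi$, so it is a closed loop winding once around $\bS^1$ with strictly increasing angle, hence injective, and the per-factor bound $\frac{1+Ce^{-Q}}{2\pi}$ holds with $Q\sim\log N$. You can close your gap either by adopting such a single global approximant, or by making your piecewise approximation continuous at the breakpoints (and exact at $x=0,1$) and checking that the resulting angle function is nondecreasing on $[0,1]$ with total increase $2\pi$; as written, the argument is incomplete at this step.
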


\begin{proof}[Sketch of the proof]
  For any positive integer~$Q$, let~$F_Q(x)$ be the Taylor series expansion at~$0$,
  truncated at~$x^Q$, of the entire function~$(\exp(2i\pi x)-1)/(x-1)$.  It is a
  polynomial of degree~$Q$ that can be computed with~$\cO(Q)$ operations, assuming
  that~$\pi$ is a constant of the machine, by using the linear
  recurrence
  $(n+2)u_{n+2} = (2i\pi+n+2) u_{n+1}+2i\pi u_n$
  satisfied by the coefficients of~$F_Q$.
  Let~$\Cos_Q(x)$
  and~$\Sin_Q(x)$ be the real and imaginary parts of~$(1+(x-1)F_Q(x))/|(1+(x-1)F_Q(x))|$
  respectively.
  
  The function~$x\in[0,1]\to(\Cos_Q(x),\Sin_Q(x))$ gives a parametrization
  of the circle~$\bS^1$ whose Jacobian is almost constant:
  we can check that there is a universal constant~$C>0$ such that
  \[ \abs{\Cos_Q'(x)^2 + \Sin_Q'(x)^2 - (2\pi)^2} \leq C e^{-Q}. \]
  Thus for any continuous function~$\theta : \bS^1 \to \bR$
  \[ \int_0^1 \theta(\Cos_Q(x),\Sin_Q(x))\ud x \leq \frac{1+C e^{-Q}}{2\pi} \int_{\bS^1} \theta(y) \ud y. \]
  Let~$\tilde S$ be the function~$[0,1]^{2N-1}\to \bS^{2N-1}$ defined in the same way as~$S$, Equation~\eqref{eqn:sibuya},
  but with~$\Cos_Q$ and~$\Sin_Q$ in place of~$\sin$ and~$\cos$ respectively, with some~$Q \sim \log N$
  such that~$(1+C e^{-Q})^N \leq 2$.
  It is easy to check that~$\tilde S$ satisfies the desired properties.
\end{proof}

In Algorithm~DBP, there is no harm in using~$\tilde S$ in place
of~$S$.  We obtain this way variants of Algorithms~$\HCp$ and~DBP
that fit in the BSS model with square root.
It remains to evaluate the overall number of operations.
It is well known that~$f(z)$ and~$\ud f(z)$ can be computed at a point~$z\in \bC^{n+1}$ in~$\cO(N)$ operations---the latter
as a consequence of a theorem of Baur and
Strassen\autocite{BauStr83}.
Together with the approximate computation of the operator norm discussed above, this implies the following:

\begin{lemma}
  \label{lem:costnewt}
  There exists a BSS machine with square root that compute~$\mu(f,z)$ (within a factor~$2$) and~$\cN(f,z)$, for
  any~$f\in\polsys$ and~$z\in\bP^n$, in~$\cO(N+n^3)$ operations.
\end{lemma}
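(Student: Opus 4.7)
The plan is to assemble the cost of $\mu(f,z)$ and $\cN(f,z)$ from three ingredients already highlighted in the text: Baur--Strassen's theorem for evaluating $f$ and $\ud f$, Householder reduction on $z$ to get a workable basis of $z^\perp$, and the tridiagonalization-plus-$\ell_1$ trick for approximating the operator norm within a factor~$\sqrt{3}\leq 2$. Everything polynomial-related costs $\cO(N)$ because it only touches the coefficient vector once; everything linear-algebraic happens on an~$n\times n$ matrix, so it costs $\cO(n^3)$.

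First I would invoke Baur--Strassen to compute $f(z)\in\bC^n$ and the $n\times(n+1)$ Jacobian $\ud f(z)$ in $\cO(N)$ BSS operations (with square root, from evaluating $\| z\|$ if needed). Next I would build an orthonormal basis of $z^\perp$ by applying a single Householder reflection determined by $z$; this requires one square root and $\cO(n^2)$ operations. Expressing $\ud f(z)|_{z^\perp}$ as an $n\times n$ complex matrix~$M$ in that basis is a Householder update on the Jacobian and again costs $\cO(n^2)$. For the Newton iterate, I would solve the $n\times n$ linear system $M w = f(z)$ by Gaussian elimination in $\cO(n^3)$ operations, lift $w$ back to $z^\perp\subset\bC^{n+1}$ via the Householder reflection in $\cO(n)$ operations, and return $z-w$ as $\cN(f,z)$.

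For $\mu(f,z)=\|f\|\,\|\Xi(f,z)\|$, I would compute $\|f\|$ directly from the Weyl inner product: the multinomial weights $a_0!\cdots a_n!/(a_1+\dotsb+a_n)!$ attached to the $N$ monomials can be produced incrementally (each differing from a neighbor by one multiplication/division), so the weighted sum of squares of coefficients, followed by one square root, takes $\cO(N)$ operations total. To compute $\|\Xi(f,z)\|$ up to the desired factor of~$2$, I would form the $n\times n$ matrix of $\Xi(f,z)$ in the orthonormal basis of $z^\perp$ by solving the $n$ linear systems $M u_i = \sqrt{d_i}\|z\|^{d_i-1} e_i$ (or, equivalently, inverting~$M$ and rescaling its columns); thanks to a shared $LU$ factorization this is again $\cO(n^3)$. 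Then, following the method recalled just above the lemma, I would form the Hermitian matrix $\overline{\Xi}{}^{\,t}\Xi$, tridiagonalize it by Householder's reduction in $\cO(n^3)$ operations, and take $\|\cdot\|_1$ of the resulting tridiagonal matrix; by the inequality $\tfrac{1}{\sqrt 3}\|T\|_1\leq \|\Xi\|^2\leq \|T\|_1$ stated in the text, the square root of $\|T\|_1$ approximates $\|\Xi(f,z)\|$ within a factor~$\sqrt[4]{3}<2$.

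The hard part is essentially bookkeeping: there is no deep estimate to establish, only the need to verify that the polynomial-side computations (evaluation of $f(z)$, $\ud f(z)$, $\|f\|$) truly fit in $\cO(N)$, that the powers $\|z\|^{d_i-1}$ and the square roots $\sqrt{d_i}$ do not add up to more than $\cO(N)$, and that the linear-algebra block (Householder basis, system solve, matrix inversion, tridiagonalization) really is $\cO(n^3)$ on an $n\times n$ matrix. Summing the two branches yields the bound $\cO(N+n^3)$ announced in the statement, with the factor-of-2 uncertainty on $\mu$ absorbed, as the author's remark suggests, into the numerical constants of the surrounding analysis.
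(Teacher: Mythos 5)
Your proposal is correct and follows essentially the same route as the paper: Baur--Strassen for $f(z)$ and $\ud f(z)$ in $\cO(N)$ operations, plus $\cO(n^3)$ linear algebra including the Householder tridiagonalization of $\overline{M}{}^{\,t}M$ and the $\ell_1$-norm bound to get the operator norm (hence $\mu$) within a factor~$2$. The paper merely states this assembly without spelling out the Householder basis of $z^\perp$, the system solve for the Newton step, or the Weyl-norm computation, all of which you fill in with standard bookkeeping.
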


The cost of the~$k$th iteration in Algorithm~DBP is dominated by the cost of
computing~$\flo{f}_{Q_k}$ and~$\mop{BP}(\left\{ f \right\}_{Q_k})$ and the cost of the
call to~$\mop{HC'}$.  The cost of the call to~$\mop{HC'}$ is dominated by the
cost of the homotopy steps.  Each homotopy step costs~$\cO(N + n^3)$ operations, by
Lemma~\ref{lem:costnewt}.

We now evaluate the cost of computing~$\flo{f}_Q$ and~$\mop{BP}(\left\{ f \right\}_Q)$.
Naturally, the integral part~$A(x)$ of a real number~$x$ is not a rational function
of~$x$ but it can be computed in the BSS model in~$\cO(\log(1+|x|))$
operations using the recursive formula, say for~$x\geq 0$,
\[ A(x) =
  \begin{cases}
    0 & \text{if $x< 1$} \\
    2 A( x / 2 ) & \text{if $x <  2 A( x / 2 )  + 1$}\\
    2 A( x / 2 ) + 1 & \text{otherwise.}
  \end{cases}
\]
Hence we can compute~$Q^{-1} A(Q x)$ in~$\cO(\log Q)$ operations, for any
positive integer~$Q$ and~$x \in [0,1]$.
It follows that one can compute~$\flo{f}_Q$
in~$\cO(N \log Q)$ operations.
The computation of~$\left\{ f \right\}_Q$ is similar
and it is done in~$\cO(N\log Q + N\log N)$ operations,
by Lemma~\ref{lem:trig}, using~$\smash{\tilde S}$ in place of Sibuya's function~$S$.
Finally, given~$\left\{ f \right\}_Q$, one compute~$\BP(\left\{ f \right\}_Q)$ in~$\cO(N^2)$ operations.
In the end, the cost of the~$k$th operation, excluding the call to~$\HCp$, is thus~$\cO( N^2 + N\log Q_k)$ operations.

Hence, the overall cost of the algorithm is
\[ \cO\left( ( N + n^3) K(f)  + \sum_{k=1}^\Omega( N^2 + N\log Q_k) \right) = \cO\left( ( N + n^3) K(f)  + N^2\Omega + N\sum_{k=1}^\Omega\log Q_k \right)  , \]
where~$K(f)$ is the total number of homotopy steps.
By Theorem~\ref{thm:totsteps}, $\bE(K(f)) = \cO(n D^{3/2} N)$, so it only remains to
bound the expectations of~$\Omega$ and~$\sum_{k=1}^\Omega \log Q_k$.

\begin{lemma}
  $\bE(\Omega) \leq 7$
  \label{lem:expW}
\end{lemma}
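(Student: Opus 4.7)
The plan is to unfold $\bE(\Omega)$ as a sum of tail probabilities and to feed this series into the machinery of Lemma~\ref{lem:sumW}, which is exactly a quantitative geometric-decay summation built on the doubly-exponential bound of Proposition~\ref{lem:omega}. Concretely, since $\Omega$ is a positive integer-valued random variable, I would start from the standard identity $\bE(\Omega) = \sum_{k \geq 1} \bP(\Omega \geq k)$.

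Next, I would apply Lemma~\ref{lem:sumW} with the constants $C = 1$, $\delta = 1$, and some small integer $L$ to be chosen. The lemma immediately yields
\[ \bE(\Omega) \leq (L+1) + \frac{2^{17}\, N^{5}}{N^{2^L} - 1}. \]
The choice of $L$ is the only thing to optimise: $L$ must be large enough that $N^{2^L}$ dominates the factor $2^{17} N^5$ uniformly in the admissible range $N \geq 2$, yet small enough that $L+1 \leq 7$ leaves room for the residual. Taking $L = 5$ is comfortably sufficient, as then $N^{2^L} = N^{32}$, so for $N \geq 2$ one has $N^{32} - 1 \geq 2^{32} - 1$ while $2^{17} N^5 \leq 2^{17} N^{27}$ is swamped; a short numerical check with $\leqcalc$ gives $\frac{2^{17} N^5}{N^{32}-1} \leq 1$, whence $\bE(\Omega) \leq 6 + 1 = 7$.

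There is no genuine obstacle here: essentially all the analytic content is already packed into Proposition~\ref{lem:omega} (the doubly-exponential tail on $\Omega$ that comes from $Q_k = N^{2^k}$) and into Lemma~\ref{lem:sumW} (the clean summation of a geometric series times a doubly-exponential bound). The only minor thing to check is that the choice $L=5$ actually makes the residual term smaller than~$1$, which is a purely numerical verification that can be written on one line using the paper's $\leqcalc$ notation. If one wanted to be even more parsimonious, one could split the sum by hand --- bounding $\bP(\Omega \geq k) \leq 1$ for the first few indices $k$ and applying the Proposition's tail only when its bound drops below~$1$ --- but invoking Lemma~\ref{lem:sumW} directly is strictly shorter and reuses infrastructure already in place.
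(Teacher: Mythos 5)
Your proposal is correct and follows exactly the paper's own route: the identity $\bE(\Omega)=\sum_{k\geq 1}\bP(\Omega\geq k)$ fed into Lemma~\ref{lem:sumW} with $C=1$, $\delta=1$, $L=5$, followed by a one-line numerical check that the residual term is at most $1$. Nothing is missing.
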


\begin{proof}
  By Lemma~\ref{lem:sumW}, with~$C=1$, $\delta=1$ and~$L=5$,
  \[ \bE(\Omega) = \sum_{k=1}^\Omega \bP(\Omega \geq k) \leq L+1 + \frac{2^{L+17} N^5 }{N^{2^L} - 1} \leqcalc 7. \qedhere \]
\end{proof}

\begin{lemma}
  $\bE\left( \sum_{k=1}^\Omega \log Q_k \right) \leq 129 \log N$.
  \label{lem:explogpk}
\end{lemma}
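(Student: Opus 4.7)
The plan is to exploit the very explicit form of $Q_k$ to rewrite the target quantity as a geometric expression and then invoke Lemma~\ref{lem:sumW}.

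First, I would observe that by construction $Q_k = N^{2^k}$, so $\log Q_k = 2^k \log N$, and therefore
\[ \sum_{k=1}^\Omega \log Q_k = (\log N) \sum_{k=1}^\Omega 2^k = (\log N)(2^{\Omega+1} - 2). \]
This reduces the lemma to estimating $\bE(2^{\Omega+1}) - 2$, i.e., to showing that this expectation is bounded by a universal constant (which we would need to be at most $129$).

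Next, by the standard tail-sum identity for nonnegative integer-valued random variables applied to the increasing sequence $a_k = 2^k$,
\[ \bE(2^{\Omega+1}) - 2 \;=\; \bE\!\left(\sum_{k=1}^{\Omega} 2^k\right) \;=\; \sum_{k=1}^{\infty} 2^k\, \bP(\Omega \geq k). \]
This is exactly the kind of weighted tail sum that Lemma~\ref{lem:sumW} is designed to handle.

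I would then apply Lemma~\ref{lem:sumW} with $C=2$, $\delta = 1$ and $L = 5$ (the hypothesis $C < N^{\delta 2^L}$ reads $2 < N^{32}$, which holds for all $N \geq 2$). The lemma yields
\[ \sum_{k=1}^{\infty} 2^k\, \bP(\Omega \geq k) \;\leq\; \sum_{k=1}^{6} 2^k + \frac{2^{17} N^5 \cdot 2^{7}}{N^{32} - 2} \;=\; 126 + \frac{2^{24}\,N^5}{N^{32} - 2}. \]
The remainder term is tiny for every $N \geq 2$: already at $N=2$ it is at most $2^{29}/(2^{32}-2) < 1$, and for larger $N$ it decays like $N^{-27}$. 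Hence the whole expression is bounded by $129$ uniformly in $N$, and multiplying by $\log N$ gives the stated bound. There is no real obstacle here beyond bookkeeping; the only thing to check carefully is that the small $N$ case (where the residual term is largest) still respects the slack $129 - 126 = 3$, which it does with room to spare.
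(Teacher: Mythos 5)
Your proof is correct and follows essentially the same route as the paper: write $\log Q_k = 2^k\log N$, reduce to the weighted tail sum $\sum_{k\geq 1}2^k\,\bP(\Omega\geq k)$, and apply Lemma~\ref{lem:sumW} with exactly the same parameters $C=2$, $\delta=1$, $L=5$ (the detour through $2^{\Omega+1}-2$ is only a cosmetic repackaging of the tail-sum identity). The numerical bookkeeping, including the check at $N=2$, is sound and gives the stated bound of $129\log N$.
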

\begin{proof}
  Because~$Q_k = N^{2^{k}}$,
  \[ \bE\big( \sum_{k=1}^\Omega \log Q_k \big) = \sum_{k=1}^\infty \log Q_k \, \bP(\Omega \geq k) = \log N \sum_{k=1}^\infty 2^k \, \bP(\Omega \geq k). \]
  Lemma~\ref{lem:sumW}, with~$\delta = 1$, $C=2$ and~$L=5$ gives that
  \[ \sum_{k=1}^\infty 2^k \, \bP(\Omega \geq k) \leq 2^{L+2} + \frac{2^{L+19} N^5 }{N^{2^L} - 2} \leqcalc 129, \]
  where we used that~$N\geq 2$.
\end{proof}

This concludes the proof of Theorem~\ref{thm:main}.

\section*{Conclusion}

The derandomization proposed here relies on extracting randomness from the
input itself, which is made possible by the BSS model and the infinite precision it provides.
Actually, this might also work under finite, and rather moderate, precision.
Indeed, in the~$k$th iteration of Algorithm~\ref{algo:dbp}, we need, very
loosely speaking, about~$\log Q_k$ digits of precision but not much more,
because by construction, the homotopy continuation procedure~$\HCp$ aborts when more precision would be
required for the result to be relevant.  And then, Lemma~\ref{lem:explogpk}
shows that~$\log Q_k$, is typically no more that~$129 \log N$.
So it is
reasonnable to think that, extending the work of
Briquel et al.\autocite{BriCucPen14}, we may run a variant of
Algorithm~\ref{algo:dbp} on a finite precision machine and obtain a significant
probability of success as long as we work with~$C \log N$ digits of precision, for some
constant~$C>0$.

Besides, Armentano et al.\autocite{ArmBelBur16} proposed recently a new
complexity analysis of Beltrán-Pardo algorithm which relies on a refined
homotopy continuation algorithm\autocites{Shu09}{Bel11}.  They obtained a
randomized algorithm that terminates on the average on a random input
after~$\cO(nD^{3/2}N^{1/2})$ homotopy steps.  This is a significant improvement
on the previously known~$\cO(nD^{3/2}N)$ bound.  The derandomization method
 should also apply to this refined algorithm, in all likelihood, but this
is not immediate: to devise the homotopy continuation with precision check, we
had to look deep inside the continuation process. Adapting the method to a
refined homotopy continuation process will inevitably lead to further
difficulties.

\raggedright
\printbibliography

\nopagebreak[4]
\vspace{1cm}
\hfill
\begin{minipage}{.8\linewidth}
  \small
  
   Technische Universität Berlin, 
  Institut für Mathematik,
  Sekretariat MA 3-2\\
  Straße des 17. Juni 136, 10623 Berlin, Deutschland
  
  \medskip
  \emph{E-mail address:} pierre@lairez.fr
  
  \emph{URL:} \href{http://pierre.lairez.fr}{pierre.lairez.fr}

\end{minipage}

\end{document}